\newcommand{\rom}[1]{\uppercase\expandafter{\romannumeral#1}}
\theoremstyle{plain}\newtheorem{theorem}{Theorem}[section]
\theoremstyle{definition}\theoremstyle{plain}\newtheorem{lemma}[theorem]{Lemma}\newtheorem{corollary}{Corollary}\newtheorem{remark}{Remark}
\newtheorem{example}{Example}[section]
\newcommand{\bm}{\boldsymbol}
\begin{document}

\title{Quantum simulation of a class of
highly-oscillatory transport equations via Schr{\"o}dingerisation}

\author[1,2]{Anjiao Gu}

\author[1,2,3]{Shi Jin \thanks{Corresponding author: shijin-m@sjtu.edu.cn.}}

\affil[1]{School of Mathematical Sciences, Shanghai Jiao Tong University, Shanghai 200240, China}

\affil[2]{Institute of Natural Sciences, Shanghai Jiao Tong University, Shanghai 200240, China}

\affil[3]{Ministry of Education Key Laboratory in Scientific and Engineering Computing,  Shanghai 200240, China}

\renewcommand*{\Affilfont}{\small\it}
\renewcommand\Authands{ and }

\date{}
\maketitle

\abstract{
In this paper, we present quantum algorithms for a class of highly-oscillatory transport equations, which arise in semi-classical computation  of surface hopping problems and other related non-adiabatic quantum dynamics, based on the Born-Oppenheimer approximation. Our method relies on the classical nonlinear geometric optics method, and the recently developed Schr{\"o}dingerisation approach for quantum simulation of partial differential equations. 
The Schr{\"o}dingerisation technique can transform any linear ordinary and partial differential equations into Hamiltonian systems evolving under unitary dynamics, via a warped phase transformation that maps these equations to one higher dimension.
We study possible paths for better recoveries of the solution to the original problem by shifting the bad eigenvalues in the Schr\"odingerized system.  
Our method ensures the uniform error estimates {\it independent} of the wave length, thus allowing numerical accuracy, in maximum norm, even without numerically resolving the physical oscillations. Various numerical experiments are performed to demonstrate the validity of this approach.
}

{\bf{Keywords:}} Quantum simulation, Schr{\"o}dingerisation, highly oscillatory transport PDEs, nonlinear geometric optics method

\section{Introduction}

As the classical computer is increasingly closer to its physical limit~\cite{Freiser1969}, a potentially effective path to the future computing platform is quantum computing~\cite{Feynman1982} 
which are currently under rapid development. 
Compared with classical methods, quantum algorithms have already been shown to have certain advantages in a number of problems~\cite{Deutsch1985quantum,Ekert1998quantum,Nielsen2010}. 
For instance, the HHL algorithm proposed by Harrow, Hassidim and Lloyd in \cite{Harrow2009Quantum} can realize exponential acceleration when solving systems of linear equations under several conditions.
Such successful cases motivate one to extend the high-cost classical algorithms of important scientific problems to quantum algorithms~\cite{Shor1994,DiVincenzo1995quantum,Andrew1998}.
In particular, quantum algorithms for solving ordinary or partial differential equations have received extensive recent attention~\cite{Berry2014Highorder,Montanaro2016FEM,Engel2019Vlasov,Costa2019wave,Childs2020Spectral,Childs2021highprecision,Jin2022quantum}.

In this paper, we concentrate on a family of transport equations in which the phase oscillations depend on both time and space as follows
\begin{align*}
&\partial_t u+\sum_{k=1}^d A_k(x)\partial_{x_k} u=\frac{iE(t,x)}{\varepsilon}Du+Cu,\\
&u(0,x)=u_0(x),\ x\in\Omega\subset\mathbb{R}^d,
\end{align*}
where $u(t,x)\in\mathbb{C}^n$, $A_k$ and $D$ are $n\times n$ real symmetric matrices, $C\in\mathbb{R}^{n\times n}$, $E$ is a real scalar function. The initial function is the given initial data which may have a periodic oscillation with the phase $\beta(x)/\varepsilon$, i.e. $u_0(x)=f_0(x,\beta(x)/\varepsilon)$.
A number of semi-classical models in quantum dynamics can be written in this form, such as surface hopping~\cite{Chai2015}, graphene~\cite{Morandi2011Wigner}, quantum dynamics in periodic lattice~\cite{Morandi2009Multiband}, etc. 
Solving such systems is computationally daunting since 1) often the dimension is high ($d\gg1$), and 2) one needs to numerically resolve the small wave lengths which are of $O(\varepsilon)$~\cite{Maslov2001Semiclassical,Fatemi1995}. While Hamiltonian simulation provides a principled approach to solving the Schr{\"o}dinger formulation of such applications-for instance, via quantum algorithms designed for semi-classical Schr{\"o}dinger equations~\cite{Jin2022semiclassical,Bornsweil2025Semiclassical}, these methods do  need to use mesh sizes that {\it depend on} $\varepsilon$.

The nonlinear geometric optics (NGO) based method, developed in \cite{Nicolas2017Nonlinear} for above problems, uses a nonlinear eometric optics ansatz which builds the oscillatory phase as an independent variable, and a suitably chosen initial data derived from the Chapman-Enskog expansion,  guarantee that one can use mesh size and time step {\it independent of }$\varepsilon$. However, the curse-of-dimensionality remains the bottleneck for classical computation. This motivates us to consider quantum algorithms for this problem, which has no curse-of-dimensionality due to the use of qubits, since in general one just needs $n=\mathcal{O}(d\log N)$ qubits for $d$-dimensional problems, where $N$ is the number of discretization points per dimension.

Although existing Hamiltonian simulation algorithms (e.g., quantum Magnus expansion methods~\cite{Casares2024Magnus,Sharma2024Magnus,Fang2025Magnus,Bornsweil2025Magnus}) can effectively simulate many highly oscillatory systems governed by anti-Hermitian operators, the core difficulty in this case arises from the non-anti-Hermitian character of the operator, primarily due to the inclusion of the $C$ term.
Our main tool for quantum simulation is a new method called {\it Schr{\"o}dingerisation} which have been recently proposed for solving general linear ordinary and partial differential equations~\cite{Jin2022quantum,Jin2023Quantum}.
This technique can convert non-unitary dynamics (linear differential equations) to unitary dynamics (Schr{\"o}dinger type equations) by a warped phase transformation that just maps the system to one higher dimension, followed by a Fourier transform.
The original variable can be recovered via integration on or pointwise evaluation of the extra variable.
This method works for {\it both} qubits~\cite{Jin2022quantum,Jin2023Quantum} {\it and} continuous-variable frameworks~\cite{Jin2023Analog}, open quantum systems with artificial boundary conditions~\cite{Jin2023artificialboundary}, physical boundary or interface conditions~\cite{Jin2024physicalboundary}, etc. 
and problems with time-dependent coefficients \cite{Berry2020timedependent,An2021timedependent,An2022timedependent,Fang2023timemarchingbased, Cao2023quantum}.
Other possible candidates for the problem include discretization based approaches such as linear algebra-based  HHL algorithm \cite{Harrow2009Quantum}, and the linear combination of Hamiltonian simulation, as outlined in~\cite{An2023LCHS,An2023Quantum,An2024Laplace}, if one is interested in qubit framework, not  continuous variable frameworks.

The outline of this paper is as follows. 
In Section 2, we give a brief review of the Schr{\"o}dingerisation method. 
Then, based on the nonlinear geometric optics approach, we apply the Schr{\"o}dingerisation algorithm together with spectral and finite difference discretisations on the equations, for one dimensional scalar equations in Section 3.
We provide  numerical results for problems of different settings.
In Section 4, we extend the new approach to a class of PDE system which is the semi-classical approximation of surface hopping problem \cite{Chai2015}, and show the effectiveness of our method for such highly-oscillatory transport problems by presenting some typical numerical examples.
Finally, we conclude this paper in Section 5. 

Throughout this paper, we consider the problems in a finite time interval, i.e. $T=\mathcal{O}(1)$. 
For arbitrary vector $\boldsymbol{a}=(a_0,a_1,\cdots,a_{n-1})^\top$, we use the notation ${\text {diag}}\left\{\boldsymbol{a}\right\}={\text{diag}}\left[a_0,a_1,\cdots,a_{n-1}\right]$ to denote the matrix $\sum_j a_j\ket{j}\bra{j}$.
Unless otherwise specified, we adopt the uniform grids for the independent variables. 
Moreover, for a given dimension, such as $x$, we denote the number of grid points by $M_x=2^{m_x}$, and similarly for the other dimensions.

\section{Description of the Schr{\"o}dingerisation}\label{review}

We begin with a brief presentation of the Schr{\"o}dingerisation method.
It was first proposed in~\cite{Jin2022quantum,Jin2023Quantum} and can be applied to quantum simulation for general linear dynamical systems, including both systems of ODEs and PDEs:
\begin{equation}\label{ODEs}
\frac{d\boldsymbol{u}}{dt}=A\boldsymbol{u}+\boldsymbol{b},\quad \boldsymbol{u}(0)=\boldsymbol{u}_0,
\end{equation}
where $\boldsymbol{u},\boldsymbol{b}\in\mathbb{C}^n$ and $A\in\mathbb{C}^{n\times n}$ is a linear (for ODEs) or linear differential (for PDEs) operator. 
We firstly present the Schr{\"o}dingerisation method for time-independent $A$ and $b$. 
First, \eqref{ODEs} can be rewritten as a homogeneous system:
\begin{equation}\label{homoeq}
\frac{d\tilde{\boldsymbol{u}}}{dt}=\tilde{A}\tilde{\boldsymbol{u}},\quad
\tilde{\boldsymbol{u}}=\left[\begin{array}{c}
    \boldsymbol{u} \\
    \boldsymbol{r}
\end{array}\right],\quad
\tilde{A}=\left[\begin{array}{cc}
    A & \frac{I}{T} \\
    \boldsymbol{0} & \boldsymbol{0}
\end{array}\right],\quad
\tilde{\boldsymbol{u}}(0)=\left[\begin{array}{c}
    \boldsymbol{u}_0 \\
    T\boldsymbol{b}
\end{array}\right],
\end{equation}
where $T$ is the final time.
 $\tilde{A}$ can be further decomposed into a Hermitian term and an anti-Hermitian one:
\begin{equation}\label{Hmat}
\tilde{A}=H_1+iH_2,\quad H_1=\frac{\tilde{A}+\tilde{A}^\dagger}{2},\quad H_2=\frac{\tilde{A}-\tilde{A}^\dagger}{2i}.
\end{equation} 

Using the warped phase transformation $\boldsymbol{v}(t,p)=e^{-p}\tilde{\boldsymbol{u}}(t)$ for $p>0$ and properly extending the initial data to $p<0$, equation \eqref{homoeq} are then transferred to linear convection equations:
\begin{equation}\label{linearconv}
\frac{d\boldsymbol{v}}{dt}=-H_1\partial_p\boldsymbol{v}+iH_2\boldsymbol{v},\quad \boldsymbol{v}(0)=\xi(p)\tilde{\boldsymbol{u}}(0).
\end{equation}
Here, the smoothness of $\xi(p)$ has an impact on the convergence rate of the numerical method.
For instance, if we use $\xi(p)=e^{-\vert p \vert}$, it implies a first-order accuracy on the spatial discretization due to the regularity in $p$ of the initial condition.
If one uses a smoother initial value of $\boldsymbol{v}(0)$ with
\begin{equation}\label{xip}
\xi(p)=
\begin{cases}
(-3+3e^{-1})p^3+(-5+4e^{-1})p^2-p+1,& p\in(-1,0),\\
e^{-\vert p\vert},& \textit{otherwise},
\end{cases}
\end{equation}
it gives a second-order accuracy~\cite{Jin2024schrodingerization, Jin2024illposed}. Higher order accuracy can also be achieved by requiring smoother $\xi(p)$, see \cite{Jin2024illposed}.

\subsection{The discrete Fourier transform for Schr{\"o}dingerisation}\label{sec_DFT}

We discuss the discrete Fourier transformation (DFT) on $p$ in this section.
We use the uniform mesh size $\Delta{p}=(R-L)/M$ for $M=2N$ and denote the grid points as
$$p_j=L+j\Delta{p},\quad j=0,1,\cdots,M.$$
The basis functions are chosen as
$$\phi_l(x)=e^{i\mu_l(x-L)},\quad \mu_l=\frac{2\pi (l-N)}{R-L},\quad l\in[M].$$
Here $[M]=\{0,1,\cdots,M-1\}$.
Then we can define
$$\Phi_p=(\phi_{jl})_{M\times M}=(\phi_l(p_j))_{M\times M},\quad D_p={\text{diag}}\{\mu_0,\cdots,\mu_{M-1}\}.$$
After the transformation on $p$, equation \eqref{linearconv} is transferred to
\begin{equation*}
i\frac{d\boldsymbol{w}}{dt}=(H_1\otimes P_p-H_2\otimes I_{M_p})\boldsymbol{w},
\end{equation*}
where $\boldsymbol{w}=\sum_{j,k} v_j(t,p_k)\ket{j}\ket{k}$ and $P_p=\Phi_p D_p (\Phi_p)^{-1}=P_p^{\dagger}$.
By a change of variables $\tilde{\boldsymbol{w}}=(I_{M_x}\otimes(\Phi_p)^{-1})\boldsymbol{w}$, one can get a "Schr\"odingerized"  system:
\begin{equation}\label{SchODE}
i\frac{d\tilde{\boldsymbol{w}}}{dt}=(H_1\otimes D_p-H_2\otimes I_{M_p})\tilde{\boldsymbol{w}}=:H\tilde{\boldsymbol{w}}.
\end{equation}
In practical calculation, the Hamiltonian $H$ is usually sparse, and has the following properties
$$s(H)=\mathcal{O}(s(\tilde{A})),\quad \Vert H\Vert_{max}\le \Vert H_1\Vert_{max}/\Delta{p}+\Vert H_2\Vert_{max},$$
where $s(H)$ is the sparsity (maximum number of nonzero entries in each row and column) of the matrix $H$ and $\Vert H\Vert_{max}$ is its max-norm  (value of largest entry in absolute value). 
For general sparse Hamiltonian simulation, an optimal quantum algorithm can be found in~\cite{Low2017PRL}, with complexity given by the next lemma.
\begin{lemma}\label{LemSch}

An $s$-sparse Hamiltonian $H$ on $m_{H}$ qubits with matrix elements specified to $n$ bits of precision can be simulated for time interval $T$, error $\epsilon$, and success probability at least $1-2\epsilon$ with 
\begin{equation*}
\mathcal{O}\left(sT\Vert H\Vert_{max}+\frac{\log(1/\epsilon)}{\log\log(1/\epsilon)}\right)
\end{equation*}
queries and and a factor
\begin{equation*}
\mathcal{O}\left(m_{H}+n\mathrm{polylog}(n)\right)
\end{equation*}
additional gates.

\end{lemma}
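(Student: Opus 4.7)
The plan is to follow the quantum signal processing (QSP) and qubitization framework of Low--Chuang, which is precisely the source cited for this lemma and which is engineered to produce the additive separation shown between the Hamiltonian-time term and the precision term. First, I would construct a block-encoding of the rescaled Hamiltonian $H/(s\Vert H\Vert_{max})$ from the standard sparse-access oracles: an oracle $O_H$ returning the nonzero entries of $H$ to $n$ bits of precision, and an oracle $O_F$ returning the column index of the $k$-th nonzero entry in a given row. A single block-encoding query uses $\mathcal{O}(1)$ calls to these oracles plus $\mathcal{O}(m_{H}+n\,\mathrm{polylog}(n))$ elementary gates for the arithmetic and controlled state-preparation steps in the row/column selection isometries.

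With the block-encoding in hand, I would then invoke qubitization to lift it to a walk operator $W$ whose nontrivial eigenphases are $\pm\arccos(\lambda_j/\alpha)$ for each eigenvalue $\lambda_j$ of $H$, where $\alpha=s\Vert H\Vert_{max}$. Applying QSP to $W$ with phase angles chosen so that the implemented polynomial is a near-optimal truncation of the Jacobi--Anger expansion of $e^{-i\alpha T\cos\theta}$ yields $e^{-iHT}$ on the encoded subspace, up to operator-norm error $\epsilon$. The polynomial degree, and hence the number of uses of $W$ (equivalently, the number of block-encoding queries), is
\begin{equation*}
r=\mathcal{O}\!\left(\alpha T+\frac{\log(1/\epsilon)}{\log\log(1/\epsilon)}\right)=\mathcal{O}\!\left(sT\Vert H\Vert_{max}+\frac{\log(1/\epsilon)}{\log\log(1/\epsilon)}\right),
\end{equation*}
which matches the claimed query bound.

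For the gate count, each QSP iteration contributes $\mathcal{O}(1)$ controlled single-qubit rotations on the signal register in addition to one use of the block-encoding, so the per-query overhead stays $\mathcal{O}(m_{H}+n\,\mathrm{polylog}(n))$. A single round of oblivious amplitude amplification on the block-encoding ancilla upgrades the success probability to $1-2\epsilon$ without changing the asymptotic scaling.

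The main obstacle is establishing the additive $\log(1/\epsilon)/\log\log(1/\epsilon)$ term rather than a multiplicative one: this relies on a sharp tail estimate for the Bessel coefficients $J_k(\alpha T)$ when $k\gg\alpha T$, showing that the Jacobi--Anger truncation error decays super-exponentially in degree once past the main lobe. A secondary technical point is the classical precomputation of the QSP phase angles to sufficient precision so that their finite-gate approximation does not dominate the error budget $\epsilon$; this is the step that introduces the $n\,\mathrm{polylog}(n)$ factor into the gate overhead.
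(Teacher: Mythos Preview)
The paper does not prove this lemma at all: it is quoted verbatim as a known result from \cite{Low2017PRL}, with no argument supplied beyond the citation. Your outline correctly reconstructs the Low--Chuang QSP/qubitization proof from that reference, so there is nothing to compare; your proposal is simply more detailed than what the paper provides.
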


\subsection{Recovery of the solution}\label{retrieval}

We denote $\lambda(H_1)$ the set of eigenvalues of $H_1$. To discretize the $p$ domain, we need to choose a large enough domain $p\in[L,R]$ such that  
\begin{equation}\label{eq:require_L}
e^{L+2\lambda_{\max}^+(H_1)T+R_p}\leq\epsilon,\quad 
e^{L+\lambda_{\max}^-(H_1)T+\lambda_{\max}^+(H_1)T+R_p}\leq\epsilon,
\end{equation}
where $\epsilon$ is the desired accuracy, $R_p\geq 1$ is the length of the recovery region with $e^{R_p} = \mathcal{O}(1)$, and 
\begin{equation*}
\begin{aligned}
\lambda_{\max}^{+}(H_1):= \max\bigg\{\sup_{0<t<T} \{|\lambda|: \lambda\in\lambda(H_1(t)), \lambda>0\},0\bigg\},\\
\lambda_{\max}^{-}(H_1):= \max\bigg\{\sup_{0<t<T}\{|\lambda|: \lambda\in\lambda(H_1(t)), \lambda<0\},0\bigg\}.
\end{aligned}
\end{equation*}

If all eigenvalues of $H_1$ are negative, the convection term of \eqref{linearconv} corresponds to a wave moving from the right to the left, thus it does not need to impose a boundary condition for $\boldsymbol{v}$ at $p=0$. 
For the general case, i.e. if $H_1$ has non-negative eigenvalues, one uses the following Theorem to recover the original solution.
\begin{theorem}\label{Thforp}
\cite{Jin2024schrodingerization}
The solution $\tilde{\boldsymbol{u}}(T)$ can be restored by
\begin{equation*}
\tilde{\boldsymbol{u}}=e^{p_k}\boldsymbol{v}(p_k),\ \textit{or}\ \tilde{\boldsymbol{u}}=e^{p_k}\int_{p_k}^{\infty}\boldsymbol{v}dp,
\end{equation*}
where $p_k\ge p_\star=\lambda_{\max}^{+}(H_1)T$.

\end{theorem}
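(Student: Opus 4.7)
The plan rests on the observation that $\boldsymbol{w}(t,p):=e^{-p}\tilde{\boldsymbol{u}}(t)$ is an \emph{exact} classical solution of the transport equation \eqref{linearconv} whenever $\tilde{\boldsymbol{u}}$ solves $d\tilde{\boldsymbol{u}}/dt=\tilde{A}\tilde{\boldsymbol{u}}=(H_1+iH_2)\tilde{\boldsymbol{u}}$. A direct differentiation gives $\partial_t\boldsymbol{w}=e^{-p}(H_1+iH_2)\tilde{\boldsymbol{u}}$ and $-H_1\partial_p\boldsymbol{w}+iH_2\boldsymbol{w}=e^{-p}(H_1+iH_2)\tilde{\boldsymbol{u}}$, so the two sides coincide. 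Since either choice of $\xi$ (the basic $e^{-|p|}$ or the smoother version \eqref{xip}) satisfies $\xi(p)=e^{-p}$ for all $p\geq 0$, the initial data of $\boldsymbol{v}$ and $\boldsymbol{w}$ agree on the half-line $\{p\geq 0\}$. Setting $\boldsymbol{z}:=\boldsymbol{v}-\boldsymbol{w}$ produces a solution of the same PDE whose initial data are supported in $\{p<0\}$, and the theorem reduces to the claim $\boldsymbol{z}(T,p_k)=0$ whenever $p_k\geq p_\star$.

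I would establish this by a finite-speed-of-propagation argument. Since $H_1$ is Hermitian I can diagonalize $H_1=U\Lambda U^\dagger$ with $\Lambda=\mathrm{diag}(\lambda_j)$; setting $\boldsymbol{y}:=U^\dagger\boldsymbol{z}$ turns the system into
\begin{equation*}
\partial_t y_j+\lambda_j\partial_p y_j=\bigl(iU^\dagger H_2 U\,\boldsymbol{y}\bigr)_j,
\end{equation*}
i.e.\ a hyperbolic system with characteristic speeds $\lambda_j$ and a zeroth-order coupling that does not enlarge the domain of dependence. Hence the backward cone of dependence of $(T,p_k)$ at time $0$ sits inside $[p_k-\lambda_{\max}^+(H_1)T,\;p_k+\lambda_{\max}^-(H_1)T]$. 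Choosing $p_k\geq p_\star=\lambda_{\max}^+(H_1)T$ makes the left endpoint nonnegative, placing the entire cone in the region where $\boldsymbol{z}(0,\cdot)\equiv 0$. A standard energy estimate on that cone, in which the anti-Hermitian term $iH_2$ contributes nothing to $\tfrac{d}{dt}\|\boldsymbol{z}\|^2$ and the lateral flux carries the correct sign, then forces $\boldsymbol{z}\equiv 0$ inside the cone. In particular $\boldsymbol{v}(T,p_k)=e^{-p_k}\tilde{\boldsymbol{u}}(T)$, which rearranges to the pointwise recovery $\tilde{\boldsymbol{u}}(T)=e^{p_k}\boldsymbol{v}(T,p_k)$.

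The integral formula follows by applying the same argument uniformly for every $p\geq p_k\geq p_\star$, which yields $\boldsymbol{v}(T,p)=e^{-p}\tilde{\boldsymbol{u}}(T)$ on $[p_k,\infty)$, so that
\begin{equation*}
e^{p_k}\int_{p_k}^{\infty}\boldsymbol{v}(T,p)\,dp=e^{p_k}\,\tilde{\boldsymbol{u}}(T)\int_{p_k}^{\infty}e^{-p}\,dp=\tilde{\boldsymbol{u}}(T).
\end{equation*}
The main technical step is the finite-speed-of-propagation statement in the presence of the non-commuting coupling $iH_2$; carrying it out in the eigenbasis of $H_1$ as above reduces it to a routine energy estimate, but the careful bookkeeping is what pins the threshold sharply to $p_\star=\lambda_{\max}^+(H_1)T$ rather than to a looser bound involving $\|H_1\|$. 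The smoothness of $\xi$ plays no role at this continuous level, since it is only felt in $\{p<0\}$; it enters elsewhere through the spatial discretization accuracy.
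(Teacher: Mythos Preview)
Your argument is correct and is the natural proof of this recovery result: compare $\boldsymbol{v}$ with the exact solution $e^{-p}\tilde{\boldsymbol{u}}(t)$, reduce to a finite-speed-of-propagation statement for the difference, and read off the threshold $p_\star$ from the maximal rightward characteristic speed of the symmetric-hyperbolic system. The paper itself does not prove Theorem~\ref{Thforp}; it simply quotes the result from \cite{Jin2024schrodingerization}, so there is no in-paper proof to compare against.

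One minor refinement worth noting: the paper's definition of $\lambda_{\max}^{+}(H_1)$ takes a supremum over $0<t<T$, so $H_1$ is allowed to be time-dependent. Your diagonalization $H_1=U\Lambda U^\dagger$ tacitly assumes a fixed $H_1$; in the time-dependent case the eigenbasis rotates and the decoupled picture is lost. The fix is exactly the ``routine energy estimate'' you already allude to: work directly with $\tfrac{d}{dt}\int_{\mathcal{C}(t)}|\boldsymbol{z}|^2\,dp$ on the shrinking cone $\mathcal{C}(t)=[p_k-\int_t^T\lambda_{\max}^+(H_1(s))\,ds,\infty)$, use the Hermiticity of $H_1(t)$ to control the boundary flux, and note that $iH_2(t)$ contributes nothing. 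This yields precisely the threshold $p_\star=\int_0^T\lambda_{\max}^+(H_1(t))\,dt\le\lambda_{\max}^+(H_1)T$ without any diagonalization.
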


The error estimates for the discretizations of the Schr{\"o}dingerized system are given by the following Theorem.
\begin{theorem}\label{errThforSch}
\cite{Jin2024schrodingerization}
Suppose $L=-R$ satisfies \eqref{eq:require_L} with $\epsilon$ the desired accuracy. 
Assume $\tilde{\bm{u}}_h$ is the approximation of $\tilde{\bm{u}}$ recovered by $p_k$ in Theorem~\ref{Thforp}, and the initial value of $\bm{v}(0)$ as $\bm{v}(0)=\xi(p)\tilde{\bm{u}}(0)$, where $\xi(p)\in H^r(\mathbb{R})$ satisfies $\xi(p)=e^{-p} $ for $p\geq 0$.
There holds 
\begin{equation*}
\|\tilde{\bm{u}}_h(T)-\tilde{\bm{u}}(T)\|_{L^2(\Omega_p)}\lesssim \Delta p^r e^{p_k} \|\tilde{\bm{u}}(T)\|+\epsilon\|\tilde{\bm{u}}_0\|,
\end{equation*}
where $\Omega_p=(p_k,p_k+R_p)$,
$R_p\geq 1$ is the length of the recovery region with $e^R_p =\mathcal{O}(1)$.
   
\end{theorem}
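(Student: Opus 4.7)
The plan is to decompose the total error into three pieces---the recovery amplification, the Fourier spectral discretisation error on the truncated periodic domain, and the domain-truncation error from replacing the infinite $p$-line by $[L,R]$. The two conditions in \eqref{eq:require_L} will be used precisely to control the last piece.

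First I use Theorem~\ref{Thforp}: at each recovery point $p\in\Omega_p$, both $\tilde{\bm{u}}_h(T)=e^p\bm{v}_h(T,p)$ and $\tilde{\bm{u}}(T)=e^p\bm{v}(T,p)$, so that
\begin{equation*}
\|\tilde{\bm{u}}_h(T)-\tilde{\bm{u}}(T)\|_{L^2(\Omega_p)}\le e^{p_k+R_p}\|\bm{v}_h(T)-\bm{v}(T)\|_{L^2(\Omega_p)}\lesssim e^{p_k}\|\bm{v}_h(T)-\bm{v}(T)\|_{L^2(\Omega_p)},
\end{equation*}
using $e^{R_p}=\mathcal{O}(1)$. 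I then introduce the auxiliary solution $\bm{v}_{LR}$ of \eqref{linearconv} posed on the periodic torus $[L,R]$ with periodised initial data $\xi(p)\tilde{\bm{u}}_0$, and split $\bm{v}_h-\bm{v}=(\bm{v}_h-\bm{v}_{LR})+(\bm{v}_{LR}-\bm{v})$.

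For the spectral piece, the Hamiltonian $H$ in \eqref{SchODE} is Hermitian, so the Fourier semi-discrete evolution is $L^2$-unitary, and the error is governed by the initial trigonometric interpolation error of $\xi(p)\tilde{\bm{u}}_0$. Since $\xi\in H^r(\mathbb{R})$, this initial error is $\lesssim \Delta p^r\|\xi\|_{H^r}\|\tilde{\bm{u}}_0\|\lesssim \Delta p^r\|\tilde{\bm{u}}(T)\|$, after using the stable linear bound $\|\tilde{\bm{u}}_0\|\lesssim\|\tilde{\bm{u}}(T)\|$ on $[0,T]$, which furnishes the $\Delta p^r e^{p_k}\|\tilde{\bm{u}}(T)\|$ term after multiplying by the $e^{p_k}$ prefactor. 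For the truncation piece, I would diagonalise $H_1=U\Lambda U^\dagger$ and apply the method of characteristics to $-H_1\partial_p+iH_2$: the mismatch between the periodic and infinite-line data is pointwise $\lesssim e^L\|\tilde{\bm{u}}_0\|$ on the left tail and $\lesssim e^{-R}\|\tilde{\bm{u}}_0\|$ on the right tail, propagates at speeds $\lambda\in\lambda(H_1)\subset[-\lambda_{\max}^-(H_1),\lambda_{\max}^+(H_1)]$, and is not amplified by the $L^2$-unitary rotation $iH_2$. Tracking the two worst-case forward and wrap-around directions into $\Omega_p$ produces precisely the exponential bounds $e^{L+2\lambda_{\max}^+(H_1)T+R_p}$ and $e^{L+\lambda_{\max}^+(H_1)T+\lambda_{\max}^-(H_1)T+R_p}$, both $\le\epsilon$ by \eqref{eq:require_L}, giving the $\epsilon\|\tilde{\bm{u}}_0\|$ term.

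The main obstacle is this last characteristic estimate, because $H_1$ and $H_2$ need not commute and so one cannot literally transport independently in the eigenbasis of $H_1$. My strategy would be to pass to the $p$-Fourier representation, where the generator $-H_1\partial_p+iH_2$ becomes a bounded Hermitian operator acting fibrewise on each Fourier mode, exploit the resulting fibrewise unitarity, and then translate the tail information about $\xi$ outside $[L,R]$ into an $L^2(\Omega_p)$ bound via Plancherel. Matching the opposing-direction contributions to the two separate inequalities of \eqref{eq:require_L} is the most delicate technical step.
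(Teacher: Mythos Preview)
The paper does not supply its own proof of this theorem: the result is quoted directly from \cite{Jin2024schrodingerization} and no argument is reproduced here. Consequently there is no in-paper proof against which to compare your proposal.

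Your overall architecture---recovery amplification via $e^{p}$, followed by the splitting $\bm{v}_h-\bm{v}=(\bm{v}_h-\bm{v}_{LR})+(\bm{v}_{LR}-\bm{v})$ into a Fourier spectral error on the periodic interval and a domain-truncation error controlled by \eqref{eq:require_L}---is the natural one and is consistent with how such estimates are derived in the Schr\"odingerisation literature. Using unitarity of the Fourier semi-discrete flow to reduce the spectral piece to an initial interpolation error in $H^r$ is also standard and correct.

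There is, however, a genuine gap in one step. You invoke a ``stable linear bound $\|\tilde{\bm{u}}_0\|\lesssim\|\tilde{\bm{u}}(T)\|$'' to convert $\Delta p^r\|\tilde{\bm{u}}_0\|$ into $\Delta p^r\|\tilde{\bm{u}}(T)\|$. No such bound is assumed in the statement, and in general it is false: whenever $H_1$ has negative eigenvalues the flow is dissipative and $\|\tilde{\bm{u}}(T)\|$ can be much smaller than $\|\tilde{\bm{u}}_0\|$. The only a~priori lower bound is $\|\tilde{\bm{u}}(T)\|\ge e^{-\lambda_{\max}^-(H_1)T}\|\tilde{\bm{u}}_0\|$, which would introduce an uncontrolled factor $e^{\lambda_{\max}^-(H_1)T}$ into the first term. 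To recover the stated form you must estimate the spectral error differently: rather than propagating the initial interpolation error forward by unitarity, bound the collocation error by the $H^r_p$-regularity of the solution at time $T$, exploiting that on $\Omega_p$ one has exactly $\bm{v}(T,p)=e^{-p}\tilde{\bm{u}}(T)$, whose $p$-derivatives of all orders are again $e^{-p}\tilde{\bm{u}}(T)$ up to sign.

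Your caution about the truncation term is well placed. The non-commutation of $H_1$ and $H_2$ does block a na\"ive characteristics argument in the $H_1$-eigenbasis; the Fourier-fibrewise unitarity route you outline is viable, and an alternative is an energy estimate in an $e^{-p}$-weighted norm, which is the device more commonly used in the cited reference.
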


Assume the eigenvalues of $H_1$ in \eqref{Hmat} satisfy
$$\lambda_1(H_1)\le\lambda_2(H_1)\le\cdots\le\lambda_n(H_1),$$ 
the above Theorem means that we have to use qubits $\ket{p_k}$ satisfying $p_k\ge\lambda_n(H_1)T$ to recover the original variables for systems that contain unstable modes.
On the other hand, we can further give the following corollary.
\begin{corollary}\label{lambdachoice}

For the homogeneous system \eqref{homoeq}, we introduce the following initial-value problem:
\begin{equation}\label{newu}
\frac{d\tilde{\boldsymbol{u}}^{\lambda}}{dt}=(H_1-\lambda_0 I_{2n}+iH_2)\tilde{\boldsymbol{u}}^{\lambda},\ \tilde{\boldsymbol{u}}^{\lambda}(0)=\tilde{\boldsymbol{u}}(0).
\end{equation}
where $\lambda_0\in\mathbb{R}$.
Then, by denoting $\boldsymbol{v}^{\lambda}=\xi(p)\tilde{\boldsymbol{u}}^{\lambda}$ we can restore the solution $\tilde{\boldsymbol{u}}^{\lambda}$ by
\begin{equation}\label{restore}
\tilde{\boldsymbol{u}}^{\lambda}=e^{p_k}\boldsymbol{v}^{\lambda}(p_k),\ \textit{or}\ \tilde{\boldsymbol{u}}^{\lambda}=e^{p_k}\int_{p_k}^{\infty}\boldsymbol{v}^{\lambda}dp
\end{equation}
where $p_k\ge p_\star=\max\{(\lambda_n(H_1)-\lambda_0)T,0\}$.
And $\tilde{\boldsymbol{u}}(T)$ can be recovered by $e^{\lambda_0 T}\tilde{\boldsymbol{u}}^{\lambda}(T)$.

\end{corollary}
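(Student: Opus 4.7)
The plan is to reduce the corollary to a direct application of Theorem~\ref{Thforp} combined with an elementary rescaling identity for solutions of \eqref{homoeq}. The key observation is that $\lambda_0 I_{2n}$ commutes with every matrix, so a real scalar shift neither disturbs the Hermitian/anti-Hermitian decomposition of $\tilde{A}$ nor couples to the dynamics beyond an overall exponential factor.

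First I would verify that the shifted operator $\tilde{A}-\lambda_0 I_{2n}=(H_1-\lambda_0 I_{2n})+iH_2$ still satisfies the structural hypotheses of the Schr{\"o}dingerisation framework of Section~\ref{review}: since $\lambda_0\in\mathbb{R}$, the Hermitian part remains Hermitian and $iH_2$ is unchanged. Theorem~\ref{Thforp} therefore applies verbatim to \eqref{newu}, with $H_1$ replaced by $H_1-\lambda_0 I_{2n}$. The spectrum translates uniformly to $\lambda_j(H_1)-\lambda_0$, so $\lambda_{\max}^{+}(H_1-\lambda_0 I_{2n})=\max\{\lambda_n(H_1)-\lambda_0,\,0\}$, yielding the advertised threshold $p_\star=\max\{(\lambda_n(H_1)-\lambda_0)T,\,0\}$ and the recovery formulas in \eqref{restore}.

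Next I would establish the rescaling identity $\tilde{\boldsymbol{u}}^{\lambda}(t)=e^{-\lambda_0 t}\tilde{\boldsymbol{u}}(t)$. Because $\lambda_0 I_{2n}$ commutes with $\tilde{A}$, a one-line differentiation shows that $e^{-\lambda_0 t}\tilde{\boldsymbol{u}}(t)$ satisfies \eqref{newu} with the correct initial value $\tilde{\boldsymbol{u}}(0)$; uniqueness for linear ODEs then forces the identity and hence $\tilde{\boldsymbol{u}}(T)=e^{\lambda_0 T}\tilde{\boldsymbol{u}}^{\lambda}(T)$, which is the second claim of the corollary.

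There is no substantive obstacle in the argument itself. The only point I would double-check is that the truncation condition \eqref{eq:require_L} and the error bound of Theorem~\ref{errThforSch} depend on $H_1$ only through $\lambda_{\max}^{\pm}$ and on $H_2$ unchanged, so the shifted problem sits cleanly within the earlier theorems. What is worth flagging, though strictly outside the statement, is the trade-off hidden in the choice of $\lambda_0$: pushing $p_\star$ toward zero by taking $\lambda_0$ close to $\lambda_n(H_1)$ inflates $\lambda_{\max}^{-}$ and hence the truncation parameter $L$ required by \eqref{eq:require_L}, a balance that motivates the corollary but does not affect its proof.
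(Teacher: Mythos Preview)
Your proposal is correct and matches the paper's intended reasoning: the corollary is stated without proof precisely because it is meant to follow immediately from Theorem~\ref{Thforp} applied to the shifted operator $H_1-\lambda_0 I_{2n}$, together with the trivial rescaling $\tilde{\boldsymbol{u}}^{\lambda}(t)=e^{-\lambda_0 t}\tilde{\boldsymbol{u}}(t)$. Your additional remark on the trade-off in \eqref{eq:require_L} is also consistent with the paper's own discussion following the corollary.
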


The above corollary gives us the freedom to shift the eigenvalues of $H_1$ for changing the $p_{\star}$ in Theorem \ref{Thforp}.
For instance, the new Hermite part $H_1-\lambda_0 I_{2n}$ only has non-positive eigenvalues when $\lambda_0\ge\lambda_n(H_1)$.
In this situation, it has $p_\star=0$.
On the other hand, it can be observed that the choice of $\lambda_0$ will influence the region of $p$.
Thus, one can also adopt a $\lambda_0$ which is close to $\frac{\lambda_1(H_1)+\lambda_n(H_1)}{2}$ to release $L$ in \eqref{eq:require_L}.
It's easy to check that changing the eigenvalues may not always brings good results according to Theorem~\ref{errThforSch}. 
However, we still can select an appropriate $\lambda_0$ based on the actual situation to seek a better recovery.
The followings are three illustrations.
\begin{example}
Let $\delta_T$ be defined as $\Vert\tilde{\bm{u}}(0)/\tilde{\bm{u}}(T)\Vert$. Then the following conclusions hold:
\begin{itemize}
	\item If $\lambda_n(H_1)> 0$ and $\Delta p^r e^{\lambda_n(H_1)T}>\epsilon\delta_T$, we can use $\lambda_0\in[0,\lambda_n(H_1)]$ to get a smaller $p_k$ for recovery while simultaneously reducing the error.
	\item If $\lambda_n(H_1)> 0$ and $\Delta p^r e^{\lambda_n(H_1)T}\le\epsilon\delta_T$, we can use $\lambda_0\in[0,\lambda_n(H_1)]$ to get a smaller $p_k$ for recovery without increasing the error.
	\item If $\lambda_n(H_1)<0$ and $\Delta p^r<\epsilon\delta_T$, we can choose $\lambda_0\in[\lambda_b,0]$ for achieving more accurate solutions, where $\lambda_b:=\max\{\lambda_n(H_1),\log\left(\frac{\Delta{p}^r}{\epsilon\delta_T}\right)/T\}$.
\end{itemize}

\end{example}

For the quantum state, one can directly employ a measurement on the $p$-register.
After performing Hamiltonian simulation to implement the unitary operator $e^{-iHT}$ on initial state $\ket{\tilde{\psi}(0)}=\frac{\tilde{\boldsymbol{w}}(0)}{\Vert\tilde{\boldsymbol{w}}(0)\Vert}$, we will get the state $\ket{\tilde{\psi}(T)}$.
One can first apply the quantum Fourier transform (QFT) on the resulting state to retrieve $\ket{\psi(T)}$.
By choosing an appropriate $p_k\ge p_\star$ to apply quantum measurement $I_{M_x}\otimes\ket{p_k}\bra{p_k}$ on $\ket{\psi(T)}$, then the state are collapsed to
$$\ket{\psi_k(T)}\approx\frac{1}{\mathcal{N}}\left(\sum\limits_{j} v_j(T,p_k)\ket{j}\right)\otimes\ket{k},\ \mathcal{N}=\left(\sum\limits_{j} \vert v_j(T,p_k)\vert^2\right)^{1/2}$$
with probability $P_r(T,p_k)=\frac{\Vert\boldsymbol{v}(T,p_k)\Vert^2}{\sum_{l}\Vert \boldsymbol{v}(T,p_l)\Vert^2}$.
Moreover, a direct computation gives
$$P_r(T,p\ge p_\star)=\frac{1}{2}\frac{\Vert e^{-p_\star}\tilde{\boldsymbol{u}}(T)\Vert^2}{\Vert \tilde{\boldsymbol{u}}(0)\Vert^2}+\mathcal{O}(\Delta p).$$

\subsection{Application to non-autonomous ordinary differential equations}\label{timedepway}

For the time-dependent Hamiltonian $H$ in \eqref{SchODE}, we can adopt the method proposed in~\cite{Cao2023quantum}.
By adding a new variable $s\in\mathbb{R}$, \eqref{SchODE} is then transformed to an autonomous equation
\begin{equation*}
\frac{\partial\boldsymbol{z}}{\partial t}=-\frac{\partial\boldsymbol{z}}{\partial s}-iH(s)\boldsymbol{z},\ \boldsymbol{z}(0,s)=\delta(s)\tilde{\boldsymbol{w}}(0),
\end{equation*}
where $\delta(s)$ is the Dirac function.
By truncating the $s$-region to $[L',R']$, we can define the DFT matrices corresponding to the $s$ variable similar to that of $p$.
After the discrete Fourier spectral discretization on $s$, it yields
\begin{equation*}
i\frac{d\tilde{\boldsymbol{z}}}{dt}=(D_s\otimes I+I_{M_s}\otimes H)\tilde{\boldsymbol{z}},\ \tilde{\boldsymbol{z}}(0)=(\Phi_s^{-1}\otimes I)(\boldsymbol{\delta}_s\otimes\tilde{\boldsymbol{w}}(0)),
\end{equation*}
where $I=I_{2n}\otimes I_{M_p}$, $\boldsymbol{\delta}_s=\sum_j\zeta_{\omega}(s_j)\ket{j}$ and $\zeta_{\omega}$ is the regularizing function of $\delta$.
For $x\in\mathbb{R}$, it usually takes
$\zeta_{\omega}(x)=\frac{1}{\omega}\zeta(\frac{x}{\omega})$ with $\zeta(x)=1-\vert x\vert$ or $\zeta(x)=1/2(1+\cos(\pi x))$ whose support is $[-1,1]$.

\subsection{Complexity of the Schr{\"o}dingerisation}

At the end of this section, we give the complexity analysis of the Schr\"odingerisation~\cite{Jin2025queries}.
Suppose $R=-L$ is large enough satisfying
$$e^{L+\lambda_{\max}^-(H_1)T}\approx e^{L+\lambda_{\max}^+(H_1)T}\approx 0,$$
and $\Delta p = \mathcal{O}(\mu_{\max})$ is small, it follows that
\begin{equation}\label{eq:spectral_err}
\|\bm{u}_h(T)- \bm{u}(T)\|\leq \frac{\epsilon}{2} \|\bm{u}(T)\|,
\end{equation}
where $\bm{u}_h$ is the numerical solution after recovery in Section~\ref{retrieval}.
	
\begin{theorem}\label{thm:complexity}
Assume $R=-L$ are large enough and
$\Delta p=\mathcal{O}(\mu_{\max})$ is small enough to satisfy \eqref{eq:spectral_err}.
There exits a quantum algorithm that prepares an $\epsilon$-approximation of the state $\ket{\bm{u}(T)}$ with
$\Omega(1)$ success probability and a flag indicating success, using
\begin{equation*}
\tilde{\mathcal{O}}\bigg( \frac{\|\bm{u}(0)\|+T\|\bm{b}\|_{\text{smax}}}{\|\bm{u}(T)\|}\big( \alpha_H T \mu_{\max}+ \log\frac{\mu_{\max}(\|\bm{u}(0)\|+T\|\bm{b}\|_{\text{smax}})}{\epsilon\|\bm{u}(T)\|}\big)\bigg)
\end{equation*}
queries to the oracle for Hamiltonian simulation, where $\alpha_H \geq \|H_i\|, i=1,2$, and using
\begin{equation*}
\mathcal{O}\Big(\frac{\|\bm{u}(0)\|+T\|\bm{b}\|_{\text{smax}}} {\|\bm{u}(T)\|}\Big)
\end{equation*}
queries to the state preparation oracle for $\tilde{\bm{w}}_0$, where $\|\bm{b}\|_{\text{smax}}$ is defined by
$$\|\bm{b}\|_{\text{smax}}^2 = \sum_i \Big(\sup_{t\in [0,T]} |b_i(t)| \Big)^2.$$
\end{theorem}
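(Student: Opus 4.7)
The plan is to stitch together the four ingredients developed above: Schr\"odingerisation of \eqref{ODEs} into the Hamiltonian system \eqref{SchODE}; sparse-Hamiltonian simulation via Lemma~\ref{LemSch}; measurement-based recovery of $\ket{\bm{u}(T)}$ from the $p$-register; and amplitude amplification to boost the post-measurement success probability to $\Omega(1)$. The rest of the argument is bookkeeping on norms, probabilities, and error budgets.

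First I would set up the simulation. After homogenisation \eqref{homoeq}, the warped phase transformation and the discrete Fourier step yield the Hermitian operator $H = H_1\otimes D_p - H_2\otimes I_{M_p}$; for time-dependent $H$, the non-autonomous construction of Section~\ref{timedepway} adjoins one further register, so without loss of generality I may treat the evolution as autonomous. The state preparation oracle produces $\tilde{\bm{w}}(0)/\|\tilde{\bm{w}}(0)\|$, and since $\tilde{\bm{u}}(0)=(\bm{u}_0;T\bm{b})$ is tensored with the $\xi(p)$-profile on the $p$-register, we have $\|\tilde{\bm{w}}(0)\|=\mathcal{O}(\|\bm{u}(0)\|+T\|\bm{b}\|_{\text{smax}})$. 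A single approximation to $e^{-iHT}$ within precision $\epsilon'$ costs $\tilde{\mathcal{O}}(\alpha_H T\mu_{\max}+\log(1/\epsilon'))$ oracle queries, using the spectral bound $\|H\|\leq 2\alpha_H\mu_{\max}$ and Lemma~\ref{LemSch}.

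Next, perform the recovery of Section~\ref{retrieval}: apply the inverse QFT on the $p$-register and project onto $\ket{p_k}$ with $p_k\geq p_\star$, using Corollary~\ref{lambdachoice} to shift eigenvalues so that $e^{-p_\star}=\mathcal{O}(1)$. By the explicit formula at the end of Section~\ref{retrieval}, the success probability satisfies
\begin{equation*}
P_r(T,p_k)=\tfrac{1}{2}\,\frac{\|e^{-p_\star}\tilde{\bm{u}}(T)\|^2}{\|\tilde{\bm{u}}(0)\|^2}+\mathcal{O}(\Delta p)=\Theta\!\left(\frac{\|\bm{u}(T)\|^2}{(\|\bm{u}(0)\|+T\|\bm{b}\|_{\text{smax}})^2}\right).
\end{equation*}
Fixed-point amplitude amplification then boosts this to $\Omega(1)$, with a flag indicating success, using $\mathcal{O}((\|\bm{u}(0)\|+T\|\bm{b}\|_{\text{smax}})/\|\bm{u}(T)\|)$ rounds; each round consists of one forward evolution, one inverse evolution, and $\mathcal{O}(1)$ calls to the state-preparation oracle. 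Multiplying the per-round costs by this amplification factor yields both query counts in the stated form.

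The main obstacle will be propagating the error budget end-to-end. Amplitude amplification inflates any per-run inaccuracy by the same factor $(\|\bm{u}(0)\|+T\|\bm{b}\|_{\text{smax}})/\|\bm{u}(T)\|$, so I must set $\epsilon'=\Theta(\epsilon\|\bm{u}(T)\|/(\|\bm{u}(0)\|+T\|\bm{b}\|_{\text{smax}}))$ for the Hamiltonian-simulation precision, choose $\Delta p=\mathcal{O}(\mu_{\max})$ so that \eqref{eq:spectral_err} holds, and pick $R=-L$ large enough in \eqref{eq:require_L} with $\epsilon'$ in place of $\epsilon$. Theorem~\ref{errThforSch} then absorbs the truncation and Fourier errors into $\epsilon/2$, while the Hamiltonian-simulation error contributes the remaining $\epsilon/2$. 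Because every one of these tightened thresholds enters only logarithmically, the whole overhead collapses into the $\log\!\bigl(\mu_{\max}(\|\bm{u}(0)\|+T\|\bm{b}\|_{\text{smax}})/(\epsilon\|\bm{u}(T)\|)\bigr)$ summand inside the $\tilde{\mathcal{O}}$ in the Hamiltonian-simulation count, and leaves the state-preparation count at precisely the amplification factor, which is the stated bound.
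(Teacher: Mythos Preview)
The paper does not actually prove Theorem~\ref{thm:complexity}; it is stated without proof and attributed to~\cite{Jin2025queries}. So there is no ``paper's own proof'' to compare against here.

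That said, your sketch is precisely the standard route one would take to establish such a result, and it is essentially correct: Schr\"odingerise to obtain the Hamiltonian $H=H_1\otimes D_p-H_2\otimes I_{M_p}$, invoke Lemma~\ref{LemSch} with $\|H\|=\mathcal{O}(\alpha_H\mu_{\max})$, recover via projection on the $p$-register with the success-probability formula from Section~\ref{retrieval}, and then amplitude-amplify. Your error-budget reasoning---setting the per-round Hamiltonian-simulation precision to $\epsilon'=\Theta\!\bigl(\epsilon\,\|\bm{u}(T)\|/(\|\bm{u}(0)\|+T\|\bm{b}\|_{\text{smax}})\bigr)$ so that the $K$-fold amplification does not blow up the error---is the right mechanism and explains the logarithmic summand. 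Two small points you may want to tighten when writing it out in full: (i) after recovering $\tilde{\bm{u}}(T)=(\bm{u}(T);T\bm{b})$ you still need one more projection onto the first block to isolate $\ket{\bm{u}(T)}$, which contributes an extra factor already absorbed in your probability estimate since $\|\tilde{\bm{u}}(T)\|\geq\|\bm{u}(T)\|$; and (ii) the appearance of $\mu_{\max}$ \emph{inside} the logarithm in the stated bound comes from the discretisation of the $p$-register (the normalisation $\sum_k|\xi(p_k)|^2=\mathcal{O}(1/\Delta p)=\mathcal{O}(\mu_{\max})$ enters the ratio of norms before and after recovery), which you have implicitly swept into the $\tilde{\mathcal{O}}$. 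Neither point changes the structure of your argument.
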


Moreover, the smooth extension to $p<0$ for $e^{-p}$ is sufficient to achieve optimal dependence on matrix queries.
\begin{theorem}\label{thm:complexity2}
Suppose that the inhomogeneous dynamical system in \eqref{ODEs} is autonomous.  
Under the conditions of Theorem \ref{thm:complexity}, assume that $\xi(p)$ in \eqref{linearconv} is sufficiently smooth. 
Then there is a quantum algorithm that prepares an $\epsilon$-approximation of the normalized solution $\ket{\bm{u}(T)}$ with $\Omega(1)$ probability and a flag indicating success, using
\[\widetilde{\mathcal{O}}\Big(\frac{\|\bm{u}(0)\|+T\|\bm{b}\|_{\text{smax}}}{\|\bm{u}(T)\|} \alpha_H  T \log(1/\epsilon)\Big)\]
queries to the oracle for Hamiltonian simulation and
\[\mathcal{O}\Big(\frac{\|\bm{u}(0)\|+T\|\bm{b}\|_{\text{smax}}}{\|\bm{u}(T)\|}  \Big) \]
queries to the state preparation oracle for $[\xi(p_0), \cdots, \xi(p_{M_p-1})]^\top \otimes \bm{u}_0$.
\end{theorem}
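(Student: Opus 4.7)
The plan is to track through the complexity of Theorem~\ref{thm:complexity} and isolate the sole place where $\mu_{\max}$ enters polynomially, namely the sparse-Hamiltonian-simulation bound $\widetilde{\mathcal{O}}(T\,\|H\|_{\max})$ combined with $\|H\|_{\max}\lesssim \alpha_H\mu_{\max}$ from Section~\ref{sec_DFT}. Smoothness of $\xi(p)$ upgrades the algebraic-order $p$-truncation error of Theorem~\ref{errThforSch} into a spectrally-accurate one, so that $\mu_{\max}$ need only grow logarithmically in $1/\epsilon$.

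First, I would apply Theorem~\ref{errThforSch} with $r$ arbitrarily large: if $\xi$ is Gevrey or analytic in a strip about $\mathbb{R}$, its Fourier coefficients decay like $e^{-c|\mu|}$, and the $p$-truncation error of $\boldsymbol{v}(T)$ becomes exponentially small in $\mu_{\max}$. Balancing this decay against the amplification $e^{p_k}\|\tilde{\boldsymbol{u}}(T)\|$ in Theorem~\ref{errThforSch} and against the global target accuracy $\epsilon\|\boldsymbol{u}(T)\|$ gives the self-consistent choice $\mu_{\max}=\widetilde{\mathcal{O}}(\log(1/\epsilon))$. The condition \eqref{eq:require_L} on $L,R$ is also only logarithmic in $1/\epsilon$, so the domain and the recovery point $p_k$ in Theorem~\ref{Thforp} likewise remain $\mathcal{O}(\log(1/\epsilon))$.

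Second, because the system is autonomous, the Schr{\"o}dingerised Hamiltonian $H=H_1\otimes D_p-H_2\otimes I_{M_p}$ is time-independent, so Lemma~\ref{LemSch} applies directly without the auxiliary $s$-variable overhead of Section~\ref{timedepway}. Substituting the logarithmic $\mu_{\max}$ into Lemma~\ref{LemSch} yields a per-run Hamiltonian-simulation query count of $\widetilde{\mathcal{O}}(\alpha_H T\log(1/\epsilon))$. Multiplying by the amplitude-amplification factor $(\|\boldsymbol{u}(0)\|+T\|\boldsymbol{b}\|_{\text{smax}})/\|\boldsymbol{u}(T)\|$—dictated by the measurement success probability $P_r\sim \|\boldsymbol{u}(T)\|^2/\|\tilde{\boldsymbol{u}}(0)\|^2$ derived at the end of Section~\ref{retrieval}—gives the claimed Hamiltonian-simulation bound. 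The state-preparation-oracle bound inherits the same amplification factor, together with the observation that the initial state $[\xi(p_0),\ldots,\xi(p_{M_p-1})]^{\top}\otimes \boldsymbol{u}_0$ factorises: the $\xi$-register is problem-independent and loadable with $\mathcal{O}(1)$ calls, so exactly one call of the given $\boldsymbol{u}_0$-oracle is needed per amplified run.

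The main obstacle is to make the qualitative statement \emph{sufficiently smooth $\xi$ $\Rightarrow$ exponential Fourier decay} fully rigorous: $C^{\infty}$ regularity alone produces only superalgebraic decay, so one must exhibit (or explicitly assume) a Gevrey or analytic extension of $e^{-p}$ to $p<0$ that matches $e^{-p}$ on $p\ge 0$ and decays rapidly enough as $p\to-\infty$ for the $L^2$ truncation on $[L,R]$ not to eat the logarithmic savings. A subsidiary bookkeeping task is to check that the $e^{p_k}$ prefactor in Theorem~\ref{errThforSch} does not defeat the reduction—this reduces to $p_k=\mathcal{O}(\log(1/\epsilon))$ via \eqref{eq:require_L}, after, when relevant, an eigenvalue shift from Corollary~\ref{lambdachoice}.
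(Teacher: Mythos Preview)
The paper does not actually prove Theorem~\ref{thm:complexity2}; both Theorems~\ref{thm:complexity} and~\ref{thm:complexity2} are quoted without proof from the reference~\cite{Jin2025queries} (see the sentence introducing them: ``we give the complexity analysis of the Schr\"odingerisation~\cite{Jin2025queries}''). So there is no in-paper proof to compare against.

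That said, your reconstruction is the natural one and matches the intended mechanism: the only polynomial appearance of $\mu_{\max}$ in the cost of Theorem~\ref{thm:complexity} comes from $\|H\|_{\max}\lesssim\alpha_H\mu_{\max}$ inside Lemma~\ref{LemSch}, and the smoothness of $\xi$ is precisely what lets one take $\mu_{\max}=\widetilde{\mathcal{O}}(\log(1/\epsilon))$ via spectral (rather than algebraic) convergence in the $p$-discretisation error of Theorem~\ref{errThforSch}. The autonomous hypothesis is used exactly as you say, to avoid the $s$-variable augmentation of Section~\ref{timedepway} and keep $H$ time-independent. The amplitude-amplification factor and the tensor-product structure of the initial state are likewise identified correctly.

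Your flagged obstacle is genuine but also not resolved by the present paper: ``sufficiently smooth'' is left deliberately vague here, and the quantitative statement (Gevrey/analytic $\xi$ with $\xi(p)=e^{-p}$ for $p\ge0$ and suitable decay for $p<0$) belongs to the cited reference. Within the scope of this paper your proposal is as complete as the paper itself, and the argument you sketch is correct.
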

For classical computation of $d$-dimensional differential equations, the complexity typically exhibits exponential dependence on $d$. 
However, as demonstrated in the above theorem, the $d$-dependence in Schr{\"o}dingerisation's complexity stems solely from sparsity $s$ (included in $\alpha_H$) and logarithmic components. 
This dependence is at most polynomial in $d$. Consequently, quantum algorithms demonstrate significant advantages in time complexity for high-dimensional problems, not to mention their inherent superiority in space complexity.

Thus, implementing quantum versions of dimension-increasing classical algorithms achieves dual benefits: complexity improvement and preservation of the native scheme's superior properties.

\section{One-dimensional scalar transport equations}\label{one-dim}

We first review the  geometrical optics (GO) based method introduced in~\cite{Nicolas2017Nonlinear} for the deterministic one dimensional scalar equation,
\begin{equation}\label{eq1}
\partial_t u+c(x)\partial_x u+\lambda u=\frac{ia(x)}{\varepsilon}u,\quad u(0,x)=u_0(x),
\end{equation}
where $u(t,x)\in\mathbb{C},x\in\Omega_x,t\ge 0$, and $\lambda$ is a complex constant.
The functions $u_0,a$ and $c$ are given while periodic boundary conditions are considered in space.
Furthermore, function $a$ satisfies $a(x)\ge a_0>0,\forall x\in\Omega_x$.
We will also allow the initial data to be
oscillatory
$$u_0(x)=f_{0}(x,\beta(x)/\varepsilon)\equiv f_{0}(x,\tau)\quad \textit{with}\quad\tau=\frac{\beta(x)}{\varepsilon},$$
where $\beta$ is given and $f_{0}$ is assumed to be periodic w.r.t. variable $\tau$.
Then we expand the initial data with regard to the periodic variable $\tau$:
\begin{equation}\label{u0expand}
u_0(x)=\sum_{k\in\mathbb{Z}}f_k(x)e^{ik\beta(x)/\varepsilon}\equiv \sum_{k\in\mathbb{Z}} u_k(0,x),
\end{equation}
and after applying to \eqref{eq1} one can get the following equations
\begin{equation}\label{eq2}
\partial_t u_k+c(x)\partial_x u_k+\lambda u_k=\frac{ia(x)}{\varepsilon}u_k,\quad u_k(0,x)=f_k(x)e^{ik\beta(x)/\varepsilon}.
\end{equation}
Indeed the linearity of equation \eqref{eq1} allows the use of the superposition principle which means the solution can be recovered by $u(t,x)=\sum_k u_k(t,x)$.
Then we apply the standard Geometric Optics (GO) method by
injecting the ansatz
\begin{equation}\label{GOansatz}
u_k(t,x)=\alpha_k(t,x)e^{iS_k(t,x)/\varepsilon}
\end{equation}
into \eqref{eq2} to get
\begin{equation*}
\partial_t \alpha_k+c(x)\partial_x \alpha_k+\lambda\alpha_k+\frac{i}{\varepsilon}\left[\partial_t S_k+c(x)\partial_x S_k\right]\alpha_k=\frac{ia(x)}{\varepsilon}\alpha_k.
\end{equation*}
To remove the terms in $1/\varepsilon$, one can impose the following equations on $\alpha_k$ and $S_k$
\begin{subequations}
\begin{align}
\partial_t \alpha_k+c(x)\partial_x \alpha_k+\lambda\alpha_k=0,\quad \alpha_k(0,x)=f_k(x),\label{aimeqs1}\\
\partial_t S_k+c(x)\partial_x S_k=a(x),\quad S_k(0,x)=k\beta(x).\label{aimeqs2}
\end{align}
\end{subequations}
In practical computation, the series \eqref{u0expand} must be truncated. Since we are performing the Fourier expansion with respect to the fast variable $\tau$, the resulting truncation error is independent of $\varepsilon$ (i.e.  the number of terms is independent of $\varepsilon$).

\subsection{Quantum simulation of (\ref{aimeqs1}-\ref{aimeqs2})  with constant convection term}\label{QSCC}

Next we describes the quantum simulation of equation \eqref{eq1}.
First of all, we consider the case of equations (\ref{aimeqs1}-\ref{aimeqs2}) with $c(x)\equiv c$ and $c\in\mathbb{R}\setminus\{0\}$.
We choose an uniform mesh size for $x$ set by $M_x=2N_x$.
For convenience, we also apply the notations in Section~\ref{sec_DFT} to the variable $x$.
One just needs to modify the subscript $p$ to $x$, for
example, $\Phi_x$ represents the Fourier transformation matrix for $x$.
Considering the Fourier spectral discretisation on $x$ in \eqref{aimeqs1}, one easily has
\begin{equation}\label{const1}
\frac{d}{dt} {\boldsymbol{\alpha}_k}+icP_x{\boldsymbol{\alpha}_k}+\lambda \boldsymbol{\alpha}_k=\boldsymbol{0},
\end{equation}
where $\boldsymbol{\alpha}_k(t)=\sum_{j}\alpha_k(t,x_j)\ket{j}$.
Moreover, by denoting $\boldsymbol{c}_k=(\Phi_x)^{-1}\boldsymbol{\alpha}_k(t)$, one then gets
\begin{equation*}
\frac{d}{dt} {\boldsymbol{c}_k}+icD_x{\boldsymbol{c}_k}+\lambda \boldsymbol{c}_k=\boldsymbol{0}.
\end{equation*}
It can also be rewritten as
\begin{equation*}
\frac{d}{dt}\boldsymbol{c}_k=-i\left(cD_x+{\text{Im}}({\lambda})I_{M_x})\right)\boldsymbol{c}_k-{\text{Re}}({\lambda}) I_{M_x}\boldsymbol{c}_k.
\end{equation*}
Since the identity matrix is commutative with any matrix, then by using $\tilde{\boldsymbol{c}}_k(t)=e^{{\text{Re}}({\lambda})t}{\boldsymbol{c}_k}(t)$, we can get the following Hamiltonian system:
\begin{equation*}
i\frac{d}{dt}\tilde{\boldsymbol{c}}_k=\left(cD_x+{\text{Im}}({\lambda})I_{M_x})\right)\tilde{\boldsymbol{c}}_k,\ \tilde{\boldsymbol{c}}_k(0)=(\Phi_x)^{-1}\boldsymbol{\alpha}_k(0).
\end{equation*}

On the other hand, by considering the Fourier spectral discretisation on $x$ in \eqref{aimeqs2}, we can get
\begin{equation}\label{const2}
\frac{d}{dt}\boldsymbol{S}_k+icP_x\boldsymbol{S}_k=\boldsymbol{a},
\end{equation}
where $\boldsymbol{S}_k(t)=\sum_{j}S_k(t,x_j)\ket{j}$ and $\boldsymbol{a}=\sum_{j}a(x_j)\ket{j}.$
In the same way, let $\boldsymbol{d}_k=(\Phi_x)^{-1}\boldsymbol{S}_k(t)$ and $\tilde{\boldsymbol{a}}=(\Phi_x)^{-1}\boldsymbol{a}$. 
We then have
\begin{equation*}
\frac{d}{dt}\boldsymbol{d}_k+icD_x\boldsymbol{d}_k=\tilde{\boldsymbol{a}}.
\end{equation*}
For this case, we adopt the following approach.
By using $\tilde{\boldsymbol{d}}_k=cD_x\boldsymbol{d}_k+i\tilde{\boldsymbol{a}}$, we finally get the Hamiltonian system:
\begin{equation*}
i\frac{d}{dt} \tilde{\boldsymbol{d}}_k=cD_x\tilde{\boldsymbol{d}}_k,\ \tilde{\boldsymbol{d}}_k(0)=cD_x(\Phi_x)^{-1}\boldsymbol{S}_k(0)+i(\Phi_x)^{-1}\boldsymbol{a}.
\end{equation*}
If $D_x(j,j)=0$ for (at most) one $j$, it means $\bra{j}\frac{d}{dt} \boldsymbol{d}_k=\bra{j}\tilde{\boldsymbol{a}}$ such that $\bra{j}\boldsymbol{d}_k$ linearly evolves.
So we can just remove it to simulate a $(M_x-1)$-order equation or replace $D_x(j,j)$ with any non-zero real number.
In fact, \eqref{const1} and \eqref{const2} correspond to two types of ODEs
\begin{equation*}
\frac{d}{dt} x=iHx+\lambda I_n x,\ \frac{d}{dt} y=iHy+F.
\end{equation*}
where $\lambda\in\mathbb{C},x,y,F\in\mathbb{C}^n$ and $H\in\mathbb{C}^{n\times n}$ is a Hermitian matrix.
They can be transformed into Hamiltonian systems by using transformation $\tilde{x}=e^{-{\text{Re}}({\lambda})t}x$ (which also can be derived by Corollary~\ref{lambdachoice}) and $\tilde{y}=Hy-iF$ without adding any independent variable.
Finally, we denote $m_x$ to be the number of qubits on $x$ register and can obtain the following estimation.
\begin{theorem}\label{complex1}
For a constant convection term, the solution to the equations \eqref{aimeqs1} and \eqref{aimeqs2} can be simulated with gate complexity 
$$N_{Gates}=\mathcal{O}(m_{x}\log{(m_{x})}).$$
\end{theorem}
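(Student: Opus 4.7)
The plan is to exploit the fact that, after the algebraic manipulations described in Section \ref{QSCC}, both equations \eqref{aimeqs1} and \eqref{aimeqs2} have been reduced to Hamiltonian evolutions of the form $i\frac{d}{dt}\tilde{\boldsymbol{c}}_k=(cD_x+\mathrm{Im}(\lambda)I_{M_x})\tilde{\boldsymbol{c}}_k$ and $i\frac{d}{dt}\tilde{\boldsymbol{d}}_k=cD_x\tilde{\boldsymbol{d}}_k$, where the generator is diagonal in the Fourier basis. The whole simulation therefore breaks into three ingredients whose costs I will add up: (i) a discrete Fourier transform implementing $(\Phi_x)^{-1}$ to go from the physical $x$-basis to the momentum basis, (ii) the diagonal Hamiltonian evolution $e^{-it(cD_x+\mathrm{Im}(\lambda)I_{M_x})}$ or $e^{-itcD_x}$, and (iii) an inverse QFT back to $x$ for readout (or for preparing the inhomogeneous data in the $\tilde{\boldsymbol{d}}_k$ equation).

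For step (i) and (iii), the approximate QFT of Coppersmith on $m_x$ qubits costs $\mathcal{O}(m_x\log m_x)$ gates, which will turn out to be the dominant contribution. For step (ii), I would observe that $\mu_l=\frac{2\pi(l-N_x)}{R-L}$ is a linear function of the integer label $l\in[M_x]$. Writing $l=\sum_{j=0}^{m_x-1}l_j2^j$ in binary, the phase $e^{-itc\mu_l}$ factorizes as a tensor product $\prod_j e^{-itc\cdot 2\pi\cdot 2^j l_j/(R-L)}\cdot e^{itc\cdot 2\pi N_x/(R-L)}$, so the diagonal evolution is implemented by $m_x$ single-qubit phase rotations plus a global phase, hence costs $\mathcal{O}(m_x)$ gates. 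The extra summand $\mathrm{Im}(\lambda)I_{M_x}$ contributes only a global phase. The classical rescaling $\tilde{\boldsymbol{c}}_k=e^{\mathrm{Re}(\lambda)t}\boldsymbol{c}_k$ is not a quantum operation and only affects normalisation, while the affine change of variable $\tilde{\boldsymbol{d}}_k=cD_x\boldsymbol{d}_k+i\tilde{\boldsymbol{a}}$ is undone classically once the simulation of $\tilde{\boldsymbol{d}}_k$ has produced the state.

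For the inhomogeneous equation \eqref{aimeqs2}, the initial vector $\tilde{\boldsymbol{d}}_k(0)=cD_x(\Phi_x)^{-1}\boldsymbol{S}_k(0)+i(\Phi_x)^{-1}\boldsymbol{a}$ requires one additional QFT applied to $\boldsymbol{S}_k(0)$ and to $\boldsymbol{a}$, followed by a multiplication by the diagonal $cD_x$, again $\mathcal{O}(m_x\log m_x)+\mathcal{O}(m_x)$. Summing (i)--(iii) and the preparation cost yields the claimed bound $N_{\mathrm{Gates}}=\mathcal{O}(m_x\log m_x)$, independent of the evolution time $T$ and of $\varepsilon$.

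The only genuinely delicate step is verifying the factorisation used in (ii): because $D_x$ is diagonal with entries linear in the computational-basis index, its exponential is \emph{exactly} a product of one-qubit $Z$-rotations, so no Trotter splitting or sparse Hamiltonian simulation black box is needed; this is what saves the factor of $\alpha_H T$ one would otherwise pay via Lemma \ref{LemSch}. Once this observation is in place the remainder is simply the standard cost accounting for the (approximate) QFT on $m_x$ qubits.
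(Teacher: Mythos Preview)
Your proposal is correct and follows essentially the same route as the paper: both arguments identify the quantum Fourier transform as the $\mathcal{O}(m_x\log m_x)$ bottleneck and the diagonal evolution as an $\mathcal{O}(m_x)$ step. Your write-up is in fact more careful than the paper's two-line proof, since you explain \emph{why} the diagonal unitary costs only $\mathcal{O}(m_x)$ gates (the linearity of $\mu_l$ in $l$ factorises $e^{-itcD_x}$ into single-qubit $Z$-rotations), whereas the paper asserts this without justification and then somewhat puzzlingly invokes Lemma~\ref{LemSch}; you correctly observe that no black-box sparse Hamiltonian simulation is needed here and hence no $T$-dependence enters.
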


\begin{proof}
It is known that the quantum Fourier transforms on $x$ in one dimension can be implemented using $\mathcal{O}(m_x\log{m_x})$ gates.
Moreover, the diagonal unitary operators can be implemented using $\mathcal{O}(m_{x})$ gates.
Thus, the proof is completed through Lemma~\ref{LemSch}. 
\end{proof}

\subsection{Quantum simulation of (\ref{aimeqs1}-\ref{aimeqs2}) with general nonnegative convection term}\label{QSnl}

For non-constant convection term $c(x)\ge 0$, we cannot get a diagonal unitary operator by the way in section \ref{QSCC}.
We will use the Schr{\"o}dingerisation approach with a uniform mesh size for $p$ set by $M_p=2N_p$.

\subsubsection{Quantum simulation with the spectral method}

By applying the Fourier collocation spectral discretisation on $x$ in \eqref{aimeqs1} and \eqref{aimeqs2}, we can obtain
\begin{subequations}
\begin{align}
&\frac{d}{dt} {\boldsymbol{\alpha}_k}+iC(x)P_x{\boldsymbol{\alpha}_k}+\lambda \boldsymbol{\alpha}_k=\boldsymbol{0},\label{speeq1}\\
&\frac{d}{dt} {\boldsymbol{S}_k}+iC(x)P_x{\boldsymbol{S}_k}=\boldsymbol{a},\label{speeq2}
\end{align}
\end{subequations}
where $C(x)={\text{diag}} \left[c(x_0),c(x_1),\cdots,c(x_{M_x-1})\right]$.
Then, applying  the Schr{\"o}dingerisation method in section \ref{review} we can get the following Hamiltonian system from \eqref{speeq1}:
\begin{equation}\label{speeq1H}
i\frac{d}{dt}\boldsymbol{c}_k=(H_1\otimes D_p-H_2\otimes I_{M_p})\boldsymbol{c}_k:=H_{\alpha}\boldsymbol{c}_k,
\end{equation}
where $\boldsymbol{c}_k(0)=(I_{M_x}\otimes(\Phi_p)^{-1})\sum_{j,k} \xi(p_k)\alpha_k(0,x_j)\ket{j}\ket{k}$ and
\begin{align*}
&H_1=-{\text{Re}}(\lambda) I_{M_x}-i\frac{C(x)P_x-P_x C(x)}{2},\\
&H_2=-{\text{Im}}(\lambda) I_{M_x}-\frac{C(x)P_x+P_x C(x)}{2}.
\end{align*}

On the one hand, one can also obtain the Hamiltonian system from \eqref{speeq2}:
\begin{equation}\label{speeq2H}
i\frac{d}{dt}\boldsymbol{d}_k=(H_1\otimes D_p-H_2\otimes I_{M_p})\boldsymbol{d}_k:=H_{S}\boldsymbol{d}_k,
\end{equation}
where $\boldsymbol{d}_k(0)=(I_{M_x}\otimes(\Phi_p)^{-1})\sum_{j,k} \xi(p_k)S_k(0,x_j)\ket{j}\ket{k}$ and
$$
H_1=\frac{1}{2}\left[\begin{array}{cc}
    i(P_x C(x)-C(x)P_x)& A \\
    A  & \boldsymbol{0}
\end{array}\right],$$
$$ 
H_2=\frac{1}{2i}\left[\begin{array}{cc}
    -i(C(x)P_x+P_x C(x)) &  A\\
     -A & \boldsymbol{0} 
\end{array}\right],\ A={\text{diag}}\{\boldsymbol{a}\}.
$$

It should be mentioned that principal diagonal of $i(P_x C(x)-C(x)P_x)$ are all zero.
Thus, for estimations of the maximal eigenvalue, one can use Gershgorin’s Theorem or other effective ways.
Then the technique in Corollary~\ref{lambdachoice} can be used by choosing an appropriate $\lambda_0$.
After the Hamiltonian simulations of \eqref{speeq1H} and \eqref{speeq2H}, $\boldsymbol{\alpha}_k(T)$ and $\boldsymbol{S}_k(T)$ can be recovered according to \eqref{restore}.
In the rest of paper, we won't repeat this explanation again and will point out explicit eigenvalue if it is possible.
For a description of Schr{\"o}dingerisation method, we express the above process by the following Algorithm 1.
\begin{algorithm}[ht]
\caption{}
\hspace*{0.02in} {\bf Input:}
$u_0(x)$
\begin{algorithmic}[1]
\State Give the initial data $\alpha_k(0,x)$ and $S_k(0,x)$ according to $u_0(x)$.
\State Encode initial data into initial states $\ket{\tilde{\psi}_c(0)}=\frac{\boldsymbol{c}_k(0)}{\Vert\boldsymbol{c}_k(0)\Vert},\ket{\tilde{\psi}_d(0)}=\frac{\boldsymbol{d}_k(0)}{\Vert\tilde{\boldsymbol{d}}_k(0)\Vert}.$
\State Perform Hamiltonian simulation to implement the unitary operators $e^{-iH_{\alpha}T},e^{-iH_{S}T}$ on initial states to get $\ket{\tilde{\psi}_c(T)}$ and $\ket{\tilde{\psi}_d(T)}$.
\State Apply QFT on the resulting state to retrieve $\ket{\psi_c(T)}$ and $\ket{\psi_d(T)}$.
\State Conduct measurement in section \ref{retrieval} and use \eqref{restore} to recover the numerical solutions of $\alpha_k^d(T,x)$ and $S_k^d(T,x)$ at interested points $x$.
\State Restore the numerical solution $u^d(T,x)$ due to the ansatz \eqref{GOansatz} in classic computer.
\end{algorithmic}
\hspace*{0.02in} {\bf Output:}
$u^d(T,x)$ or other required data
\end{algorithm}

\begin{remark}
 In step 5, we take measurements on $\alpha_k^d(T,x)$ and $s_k^d(T,x)$ at interested points $x$ which should be a small subset of the entire computational domain. Then, in Step 6, we reconstruct  the final solution $u^d$  classically.To take measurements for all spatial grid points $x$ will involve exponential cost which is still the bottleneck for quantum algorithms for PDEs. 
\end{remark}

Assume that we use a sufficiently smooth $\xi(p)$.
By applying Theorem~\ref{thm:complexity2}, we can get that the complexity of the Schr{\"o}dingerisation for computing $\alpha_k$ needs
\[\widetilde{\mathcal{O}}\Big(\frac{T(\vert\lambda\vert+M_x\Vert c(x)\Vert_{\ell^{\infty}})\|\bm{\alpha}_k(0)\|}{\|\bm{\alpha}_k(T)\|}\log(1/\epsilon)\Big)\]
queries, where $\epsilon$ is the tolerance error.
For $S_k$, it uses
\[\widetilde{\mathcal{O}}\Big(\frac{\|\bm{S}_k(0)\|+T\|\bm{a}\|_{\text{smax}}}{\|\bm{S}_k(T)\|} (\Vert a(x)\Vert_{\ell^{\infty}}+M_x\Vert c(x)\Vert_{\ell^{\infty}})  T \log(1/\epsilon)\Big)\]
queries.
Since the result follows immediately from Theorem~\ref{thm:complexity2} by direct application, we omit the remaining details and will not reiterate similar discussions hereafter.

%
%

\subsubsection{Quantum simulation with the finite difference discretisation}\label{QSFDM}

On the other hand, one can also apply the finite difference approximation to (\ref{aimeqs1}-\ref{aimeqs2}). 
For example, if the upwind scheme is used, one gets  
\begin{align*}
&\frac{d\alpha_k^{j}(t)}{dt}+c(x_j)\frac{\alpha_k^{j}-\alpha_k^{j-1}}{\Delta x}+\lambda \alpha_k^{j}=0,\quad \alpha_k^{j}(0)=f_k(x_j),\\
&\frac{dS_k^{j}(t)}{dt}+c(x_j)\frac{S_k^{j}-S_k^{j-1}}{\Delta x}=a(x_j),\quad 
 S_k^{j}(0)=k\beta(x_j).
\end{align*}
An increment operation can be defined as follows:
$$S^{+}:=\sum_{j=1}^{M_x-1}\ket{j}\bra{j-1}=\sum_{j=1}^{m_x} I_{2^{m_x-j}}\otimes \sigma_{10}\otimes\sigma_{01}^{\otimes(j-1)},\ S^{-}=(S^{+})^{\dagger}$$
with $2\times 2$ matrices $\sigma_{ij}=\ket{i}\bra{j}$.
Then by denoting $\boldsymbol{\alpha}_k(t)=\sum_j\alpha_k^{j}(t) \ket{j}, \boldsymbol{S}_k=\sum_j S_k^{j} \ket{j}$ and 
$$
D_x^{-}=\frac{1}{\Delta x}\left(I_{M_x}-S^{+}-\sigma_{01}^{\otimes m_x}\right),D_x^{+}=-(D_x^{-})^\dagger,
$$ 
we can write the above equation in vector form as
\begin{subequations}
\begin{align}
&\frac{d\boldsymbol{\alpha}_k}{dt}=M_1\boldsymbol{\alpha}_k\label{FDsys1}\\
&\frac{d\boldsymbol{S}_k}{dt}=M_2\boldsymbol{S}_k+F_k,\label{FDsys2}
\end{align}
\end{subequations}
where $M_1=-C(x)D_x^{-}-\lambda I_{M_x}$, $M_2=-C(x)D_x^{-}$ and $F_k=\sum_j a(x_{j})\ket{j}$.
Furthermore, one has
\begin{align*}
&H_1=-{\text{Re}}(\lambda) I_{M_x}+\frac{D_x^{+}C(x)-C(x)D_x^{-}}{2},\\
&H_2=-{\text{Im}}(\lambda) I_{M_x}-\frac{D_x^{+}C(x)+C(x)D_x^{-}}{2i}
\end{align*}
for \eqref{FDsys2} while 
$$
H_1=\frac{1}{2}\left[\begin{array}{cc}
    D_x^{+}C(x)-C(x)D_x^{-} & A \\
    A  & \boldsymbol{0}
\end{array}\right],
$$
$$ 
H_2=\frac{1}{2i}\left[\begin{array}{cc}
    -D_x^{+}C(x)-C(x)D_x^{-} &  A\\
     -A & \boldsymbol{0} 
\end{array}\right],\ A={\text{diag}}\{F_k\}
$$
for \eqref{FDsys2}.
%

\subsection{Numerical results}

Next we will present some tests to ensure the effectiveness of our quantum algorithms.
In this paper, the Hamiltonian simulation is implemented by the implicit midpoint rule which is also a symplectic method of order $2$.
In addition, we use \eqref{xip} for initializations.
Furthermore, we denote $M_x=2^m$ and $M_p=2^n$ in this section for later use.

\subsubsection{A constant convection term case}\label{numres1}
First we consider a constant convection case for benchmark with $c(x)=\lambda=a(x)=1$ and the following initial data
\begin{equation}\label{eg1initial}
u_0(x)=1+\frac{1}{2}\cos(2x)+i(1+\frac{1}{2}\sin(2x)),\quad x\in[-\frac{\pi}{2},\frac{\pi}{2}].
\end{equation}
The periodic boundary condition is chosen such that the exact solution is given by
$u(t,x)=\exp(-(1-i/\varepsilon)t)u_0(x-t)$.
Figure~\ref{imgeg11} shows the comparison between the exact solution and the numerical solution for different values of $\varepsilon$ at the final time $T=1$.
It can be observed that the numerical solutions match the accurate solutions well on all three scales.

\begin{figure}[!ht]
\centering
\subfigure[]{
\includegraphics[scale=.33]{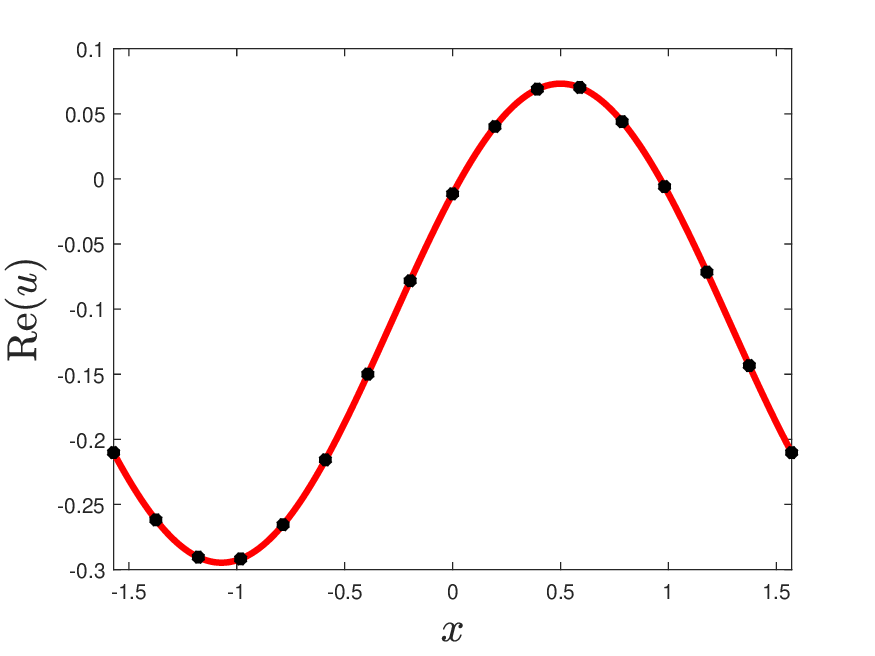}
}
\subfigure[]{
\includegraphics[scale=.33]{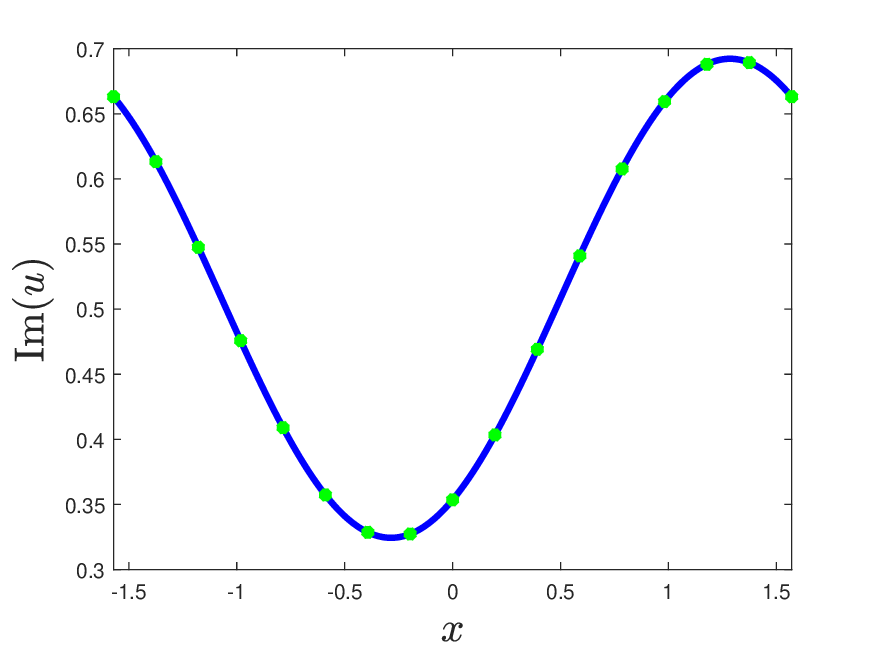}
}
\subfigure[]{
\includegraphics[scale=.33]{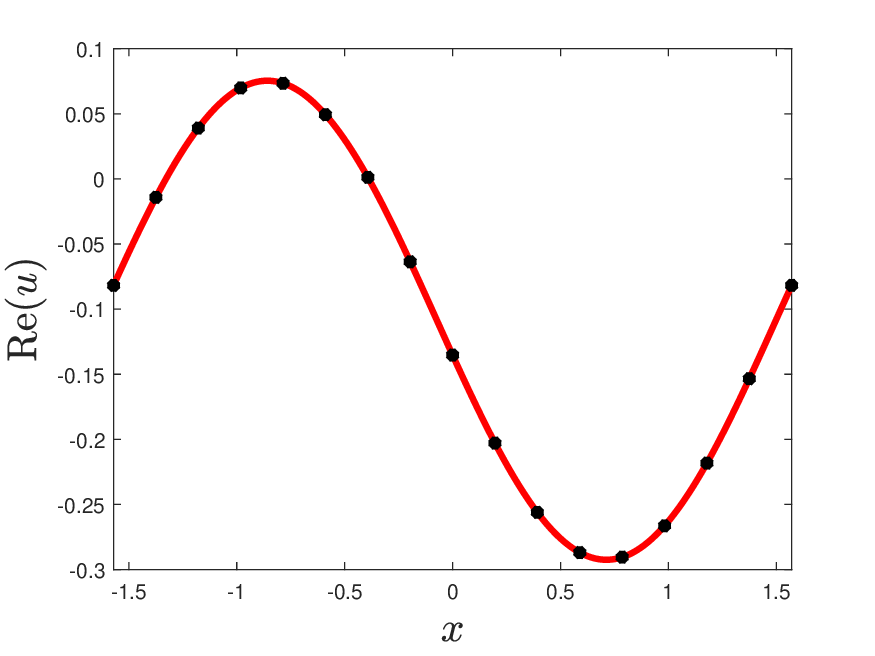}
}
\subfigure[]{
\includegraphics[scale=.33]{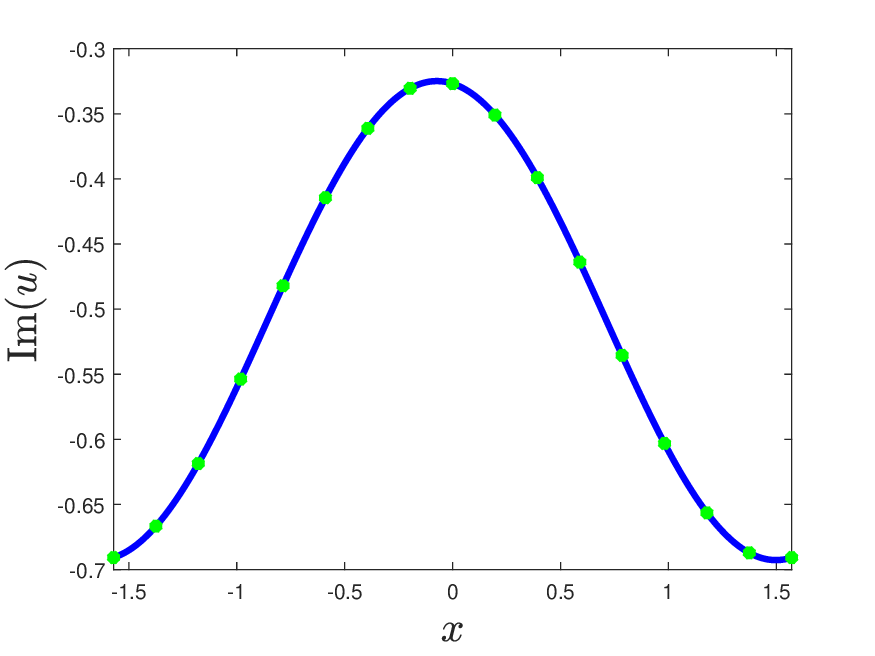}
}
\subfigure[]{
\includegraphics[scale=.33]{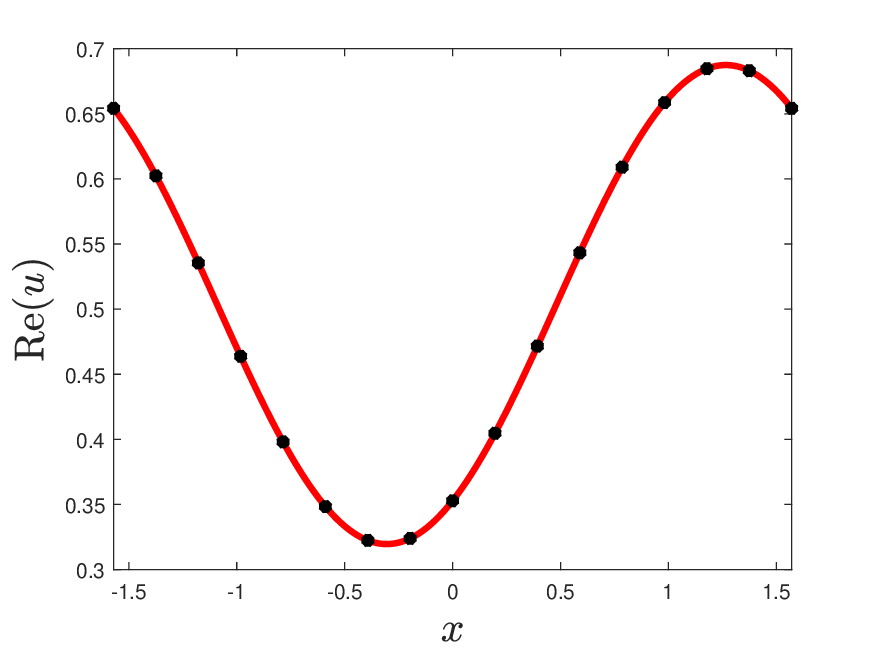}
}
\subfigure[]{
\includegraphics[scale=.33]{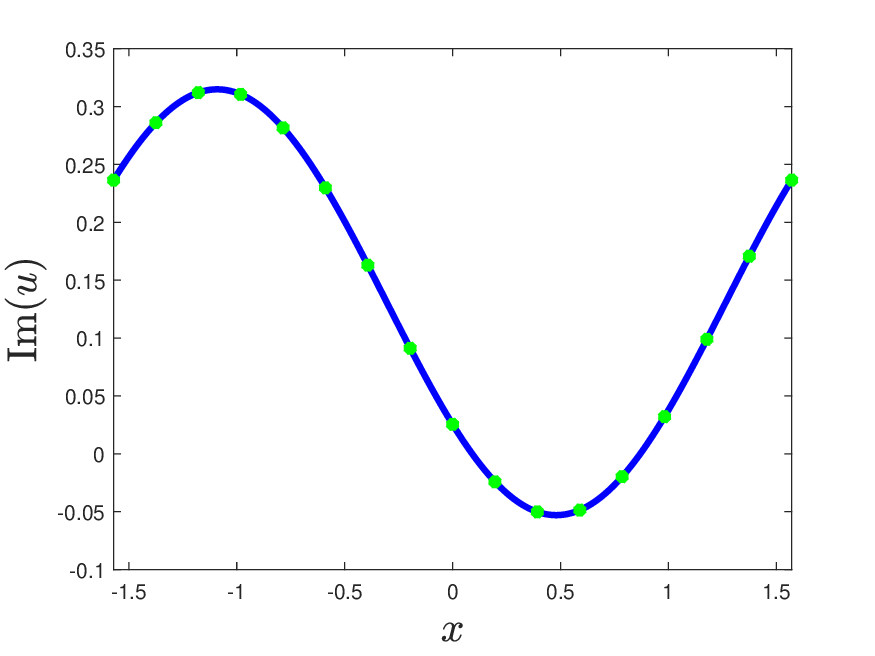}
}
\caption{Comparison between  the exact solution and the numerical solution for $T=1,m=4$. (a)(c)(e): Real part of $u$. (b)(d)(f): Imaginary part of $u$. (a)(b): $\varepsilon=1$. (c)(d): $\varepsilon=0.1$. (e)(f): $\varepsilon=0.01$.}
\label{imgeg11}
\end{figure}

To further display the capability of numerical method for capturing oscillations generated from both initial data and source--even without numerically resolving the oscillations, we add an oscillatory dependence with the initial phase $x/\varepsilon$ in Figure~\ref{imgeg12}.
The exact solution under periodic boundary condition and initial data
$$u_0(x)=e^{i\frac{x}{\varepsilon}}\left(1+\frac{1}{2}\cos(2x)+i(1+\frac{1}{2}\sin(2x))\right),\quad x\in[-\frac{\pi}{2},\frac{\pi}{2}].$$
is given by $\exp(-t+ix/\varepsilon)u_0(x-t).$ 
Since $\alpha_k$ and $S_k$ are not oscillatory, they can be solved quite efficiently by our algorithm. 

\begin{figure}[!ht]
\centering
\subfigure[]{
\includegraphics[scale=.3]{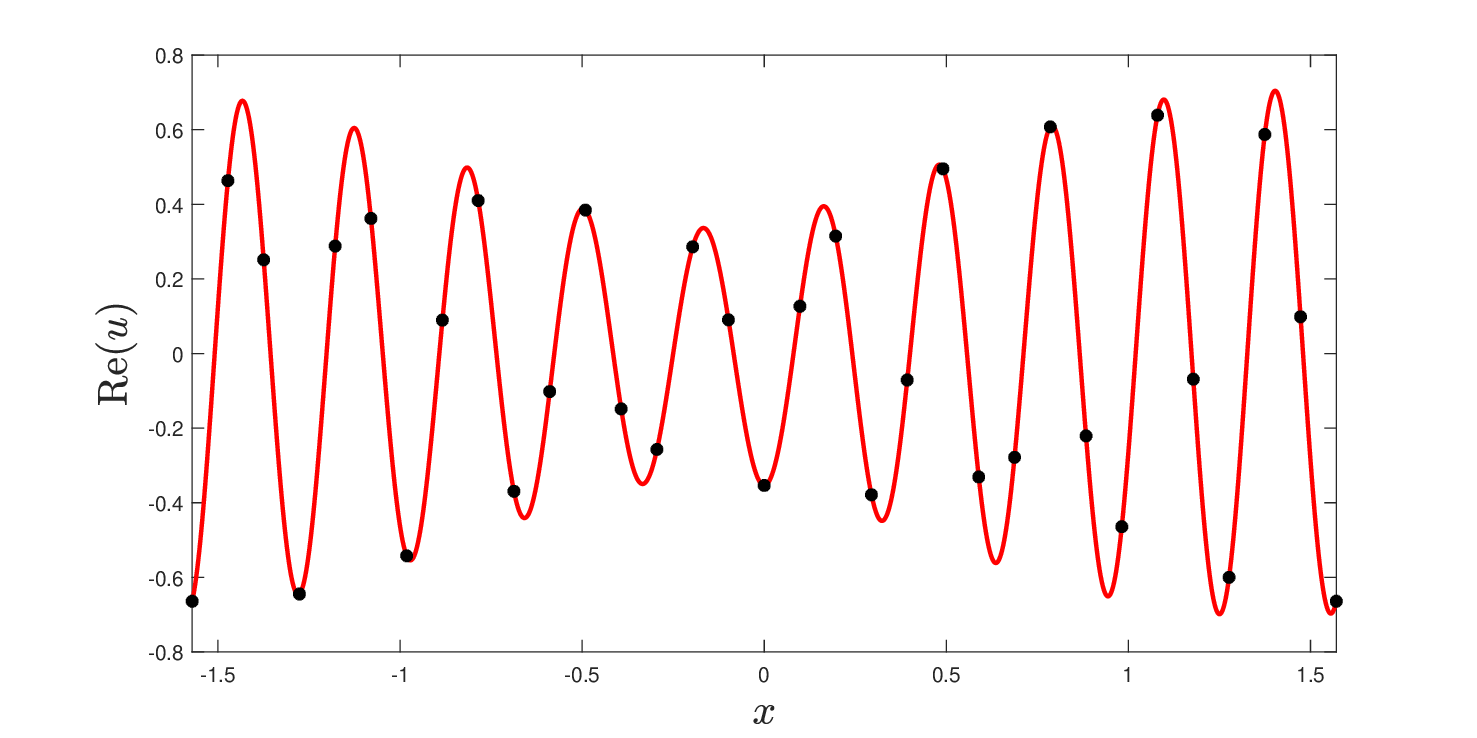}
}
\subfigure[]{
\includegraphics[scale=.3]{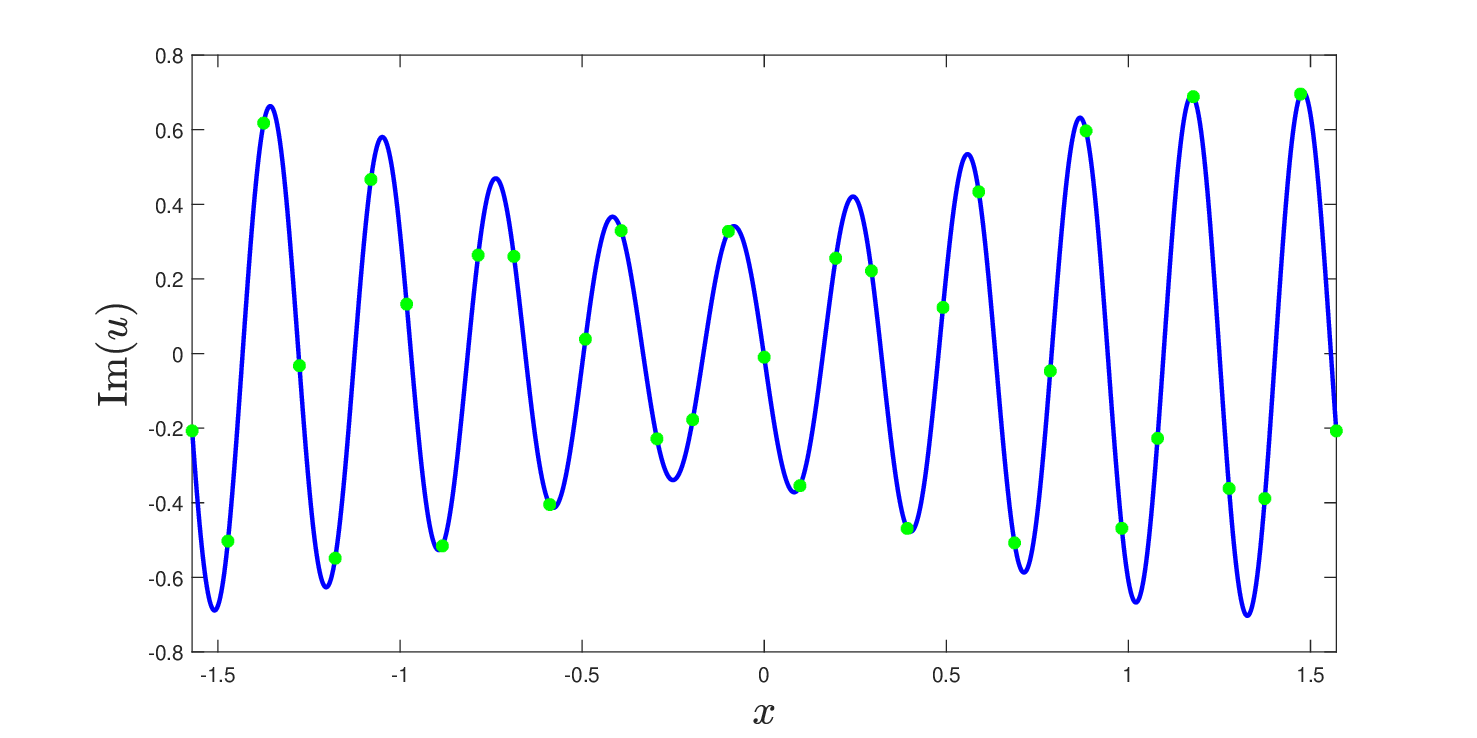}
}
\subfigure[]{
\includegraphics[scale=.3]{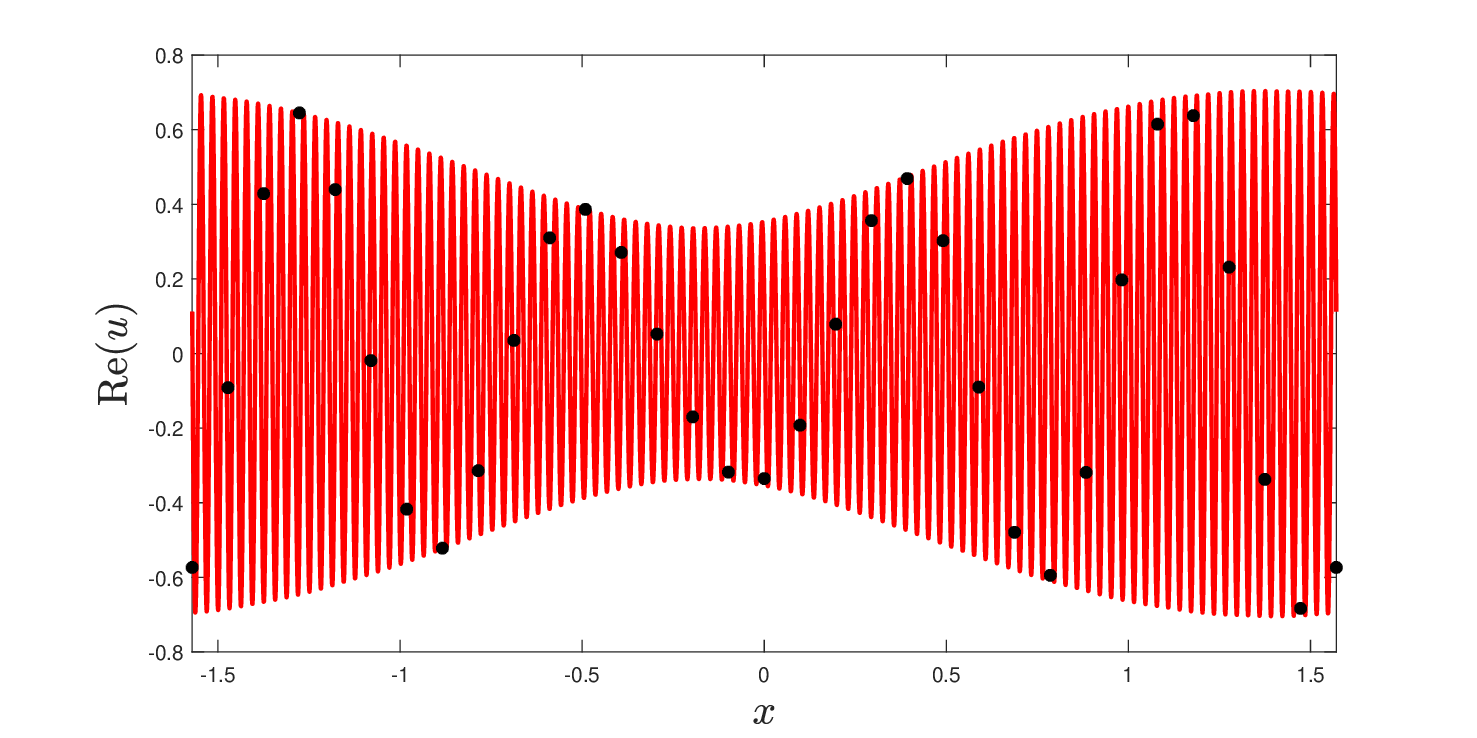}
}
\subfigure[]{
\includegraphics[scale=.3]{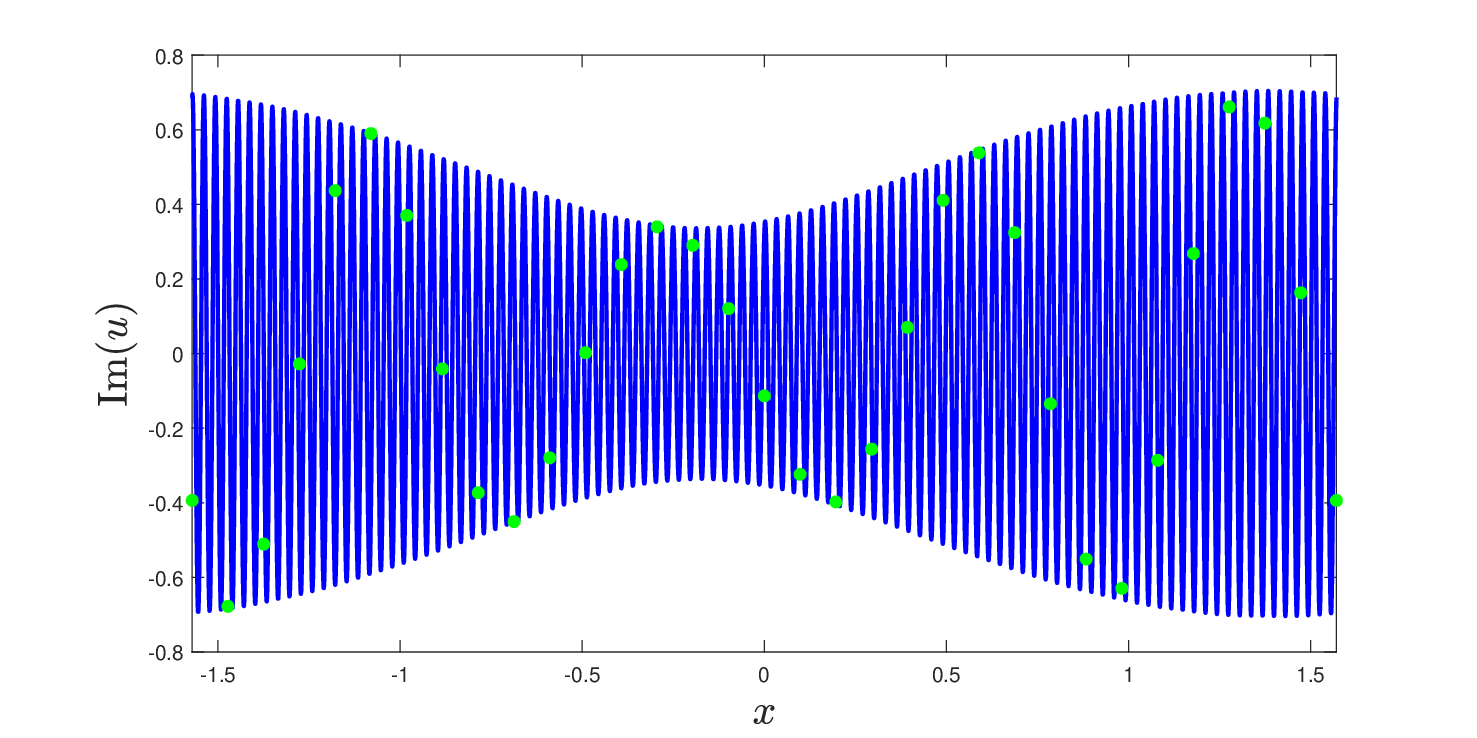}
}
\caption{Comparison between the exact solution and the numerical solution for $T=1,m=5$. (a)(c): Real part of $u$.
(b)(d): Imaginary part of $u$. (a)(b): $\varepsilon=0.1$. (c)(d): $\varepsilon=0.01$.}
\label{imgeg12}
\end{figure}

\subsubsection{A general nonnegative convection term case}

In this test, we solve \eqref{eq1} with $c(x)=\cos^2(x),a(x)=1.5+\cos(2x)$ with the initial data \eqref{eg1initial} at the final time $T=1$, and mesh size $m=5,n=9$.
For this case, the Schr{\"o}dingerisation method in section~\ref{QSnl} should be applied.
Since the spectral method and finite difference method have almost no different on performance in Figure~\ref{imgeg21}, we just plot one of them and illustrate the space-time oscillations arising in the solution with $a(x)$.
We can observe that our method is able to capture very well high oscillations, {\it point-wise},  in space--even without numerically resolving the oscillations, whether $S_k$ is accurately solved or not.

\begin{figure}[!ht]
\centering
\subfigure[]{
\includegraphics[scale=.3]{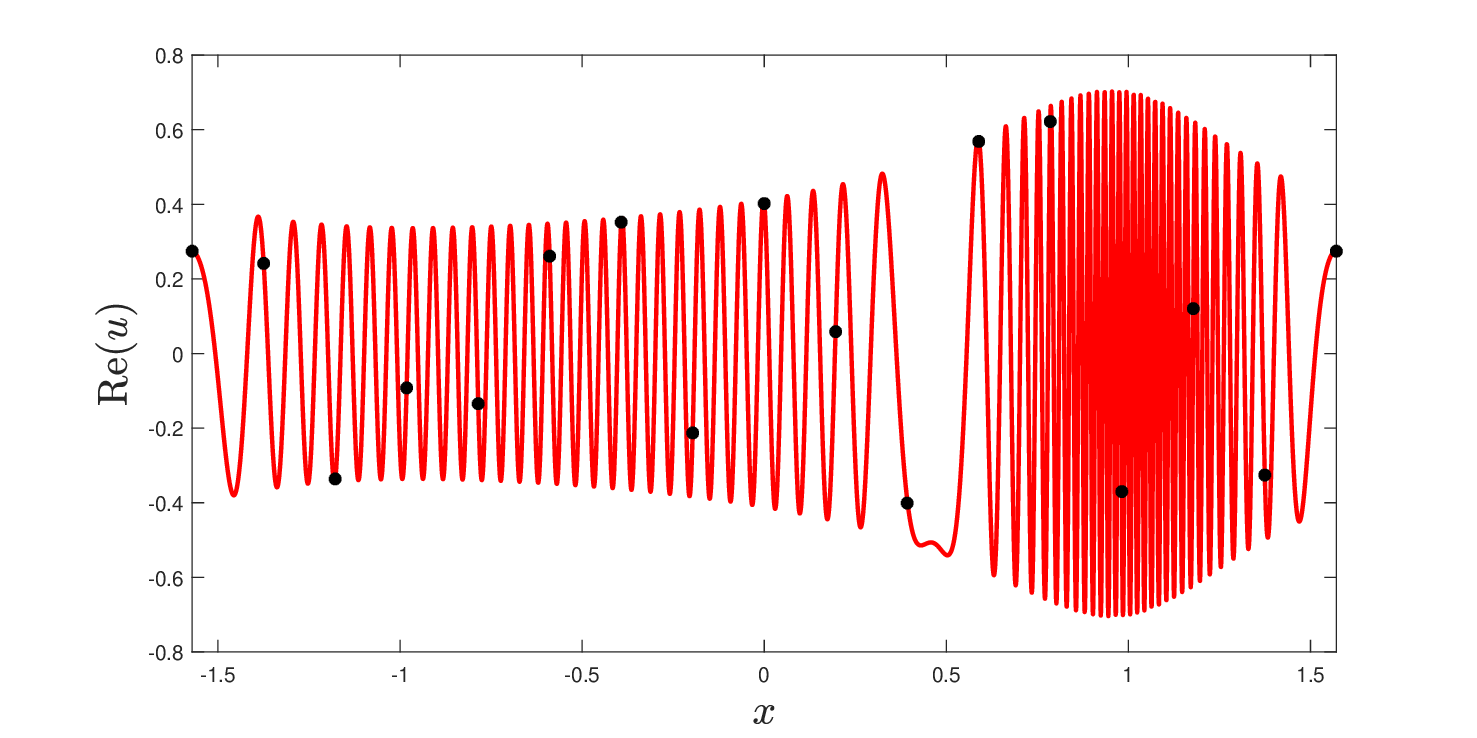}
}
\subfigure[]{
\includegraphics[scale=.3]{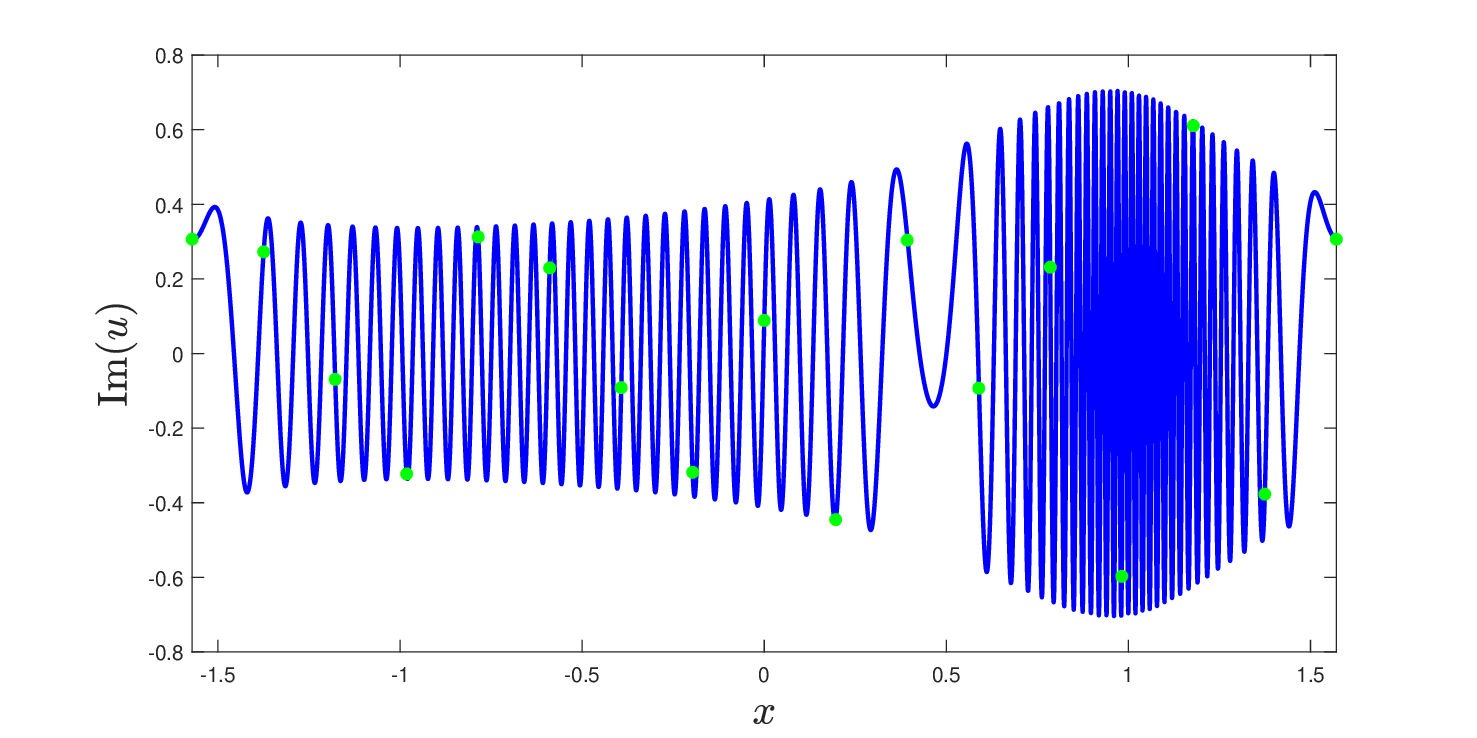}
}
\subfigure[]{
\includegraphics[scale=.3]{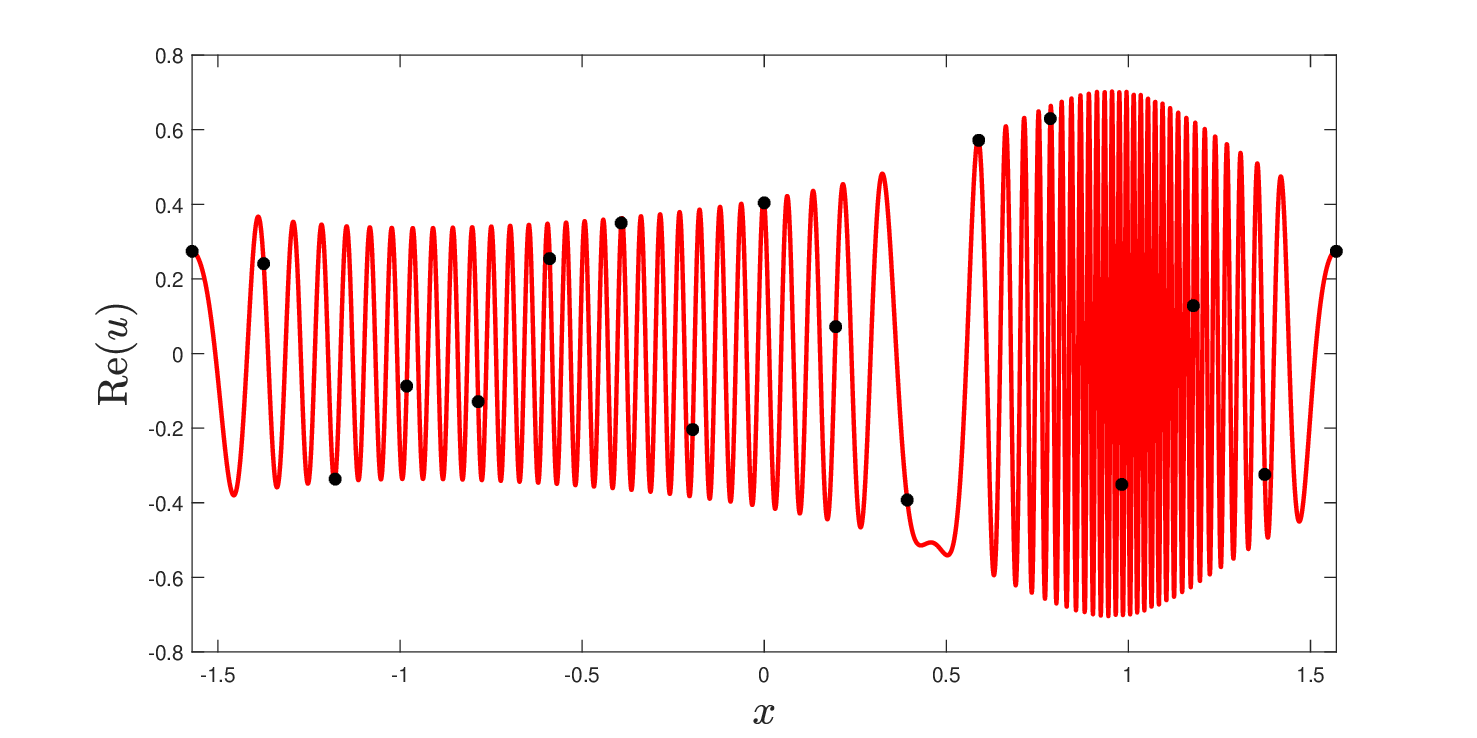}
}
\subfigure[]{
\includegraphics[scale=.3]{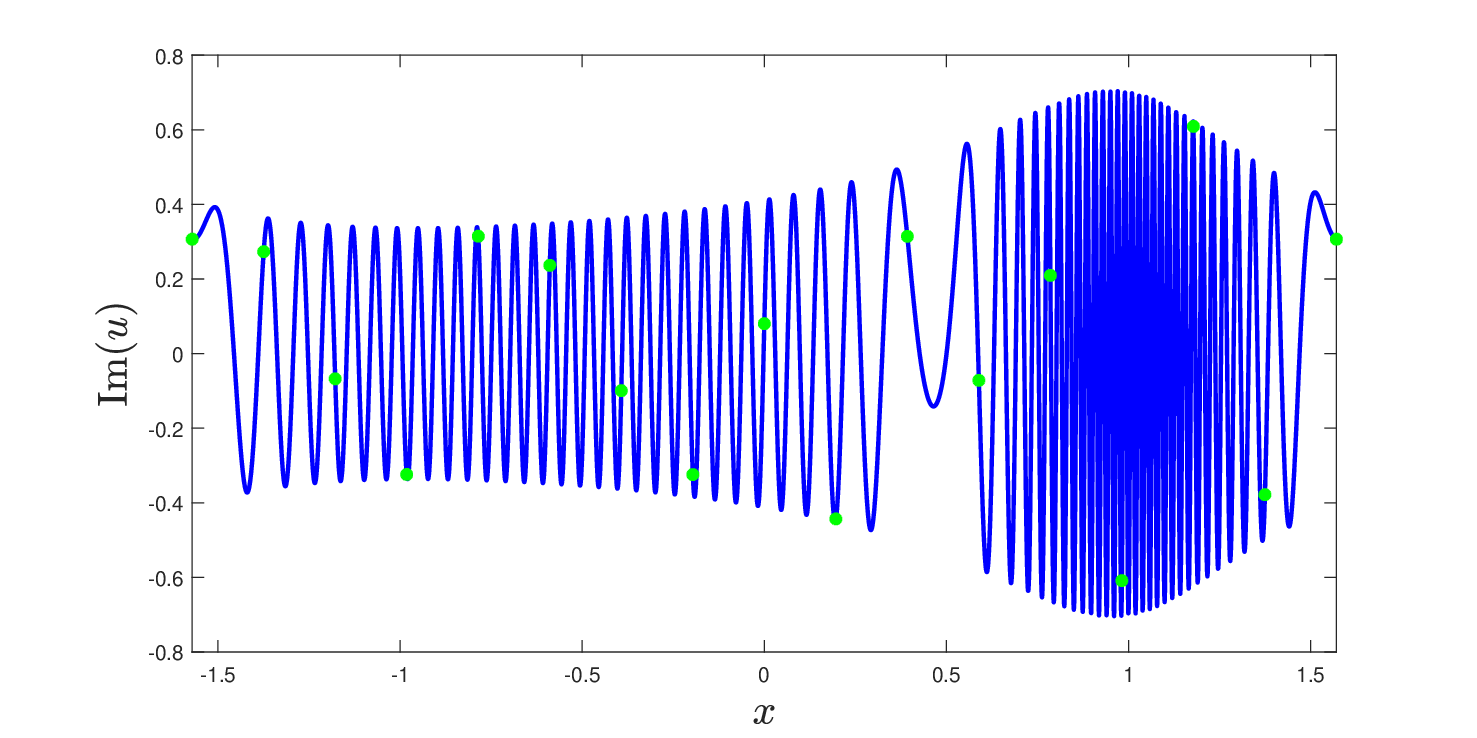}
}
\caption{Comparison between the exact solution and the solution of algorithm 1 for $\varepsilon=0.01,T=1,m=4,n=9,\lambda=1$. 
(a)(b): Numerical computation for $S_k$. (c)(d): Exact computation for $S_k$. (a)(c): Real part of $u$. (b)(d): Imaginary part of $u$.}
\label{imgeg21}
\end{figure}

In Figure~\ref{imgeg23}, we plot the error $\Vert\alpha_k^d(T,x)-\alpha_k(T,x)\Vert_{\infty}$ obtained by different recoveries.
The error is computed at the final time $T=1$ by mesh size $m=4,n=9$.
We choose $\lambda_0\in[-6-\lambda,6-\lambda]$ while the computational domain of $p$ is $[-10,10]$ due to $\lambda_n(H_1)=113/30-\lambda$ and $\lambda_1(H_1)=-113/30-\lambda$.
And in Figure~\ref{imgeg24}, we plot the bar graph of the number $\textit{card}\left(\{p_j: \Vert\alpha_k^d(T,x,p_j)-\alpha_k(T,x)\Vert_{\infty}\le\Delta p\}\right)$ for various $\lambda$ and $\lambda_0$.
It can be observed that the optimal recovery of Schr{\"o}dingerisation can be obtained by choosing a certain $\lambda_0$ which belongs to $[\frac{\lambda_1(H_1)+\lambda_n(H_1)}{2},\lambda_n(H_1)]$.


\begin{figure}[!ht]
\centering
\subfigure[]{
\includegraphics[scale=.5]{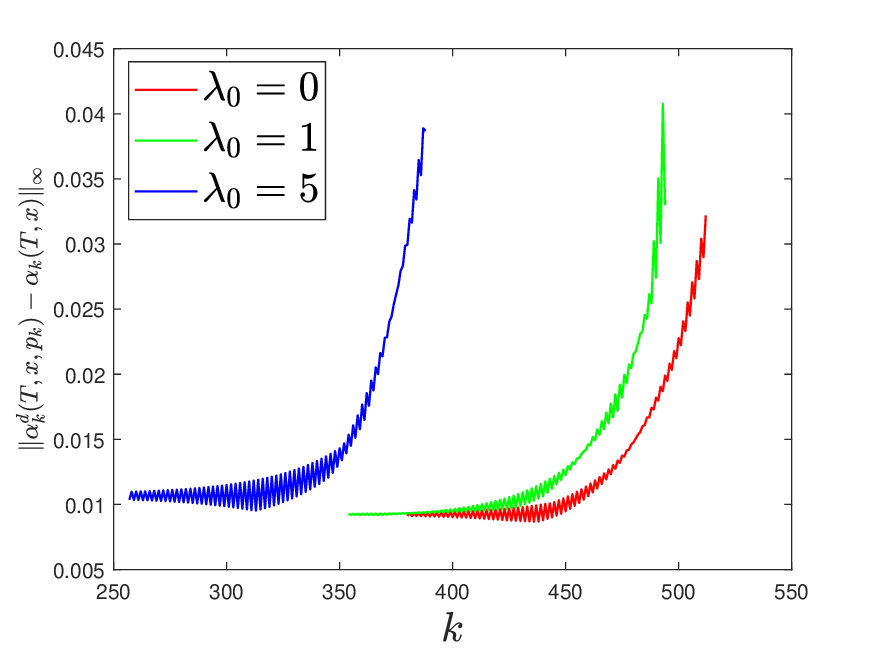}
}
\subfigure[]{
\includegraphics[scale=.5]{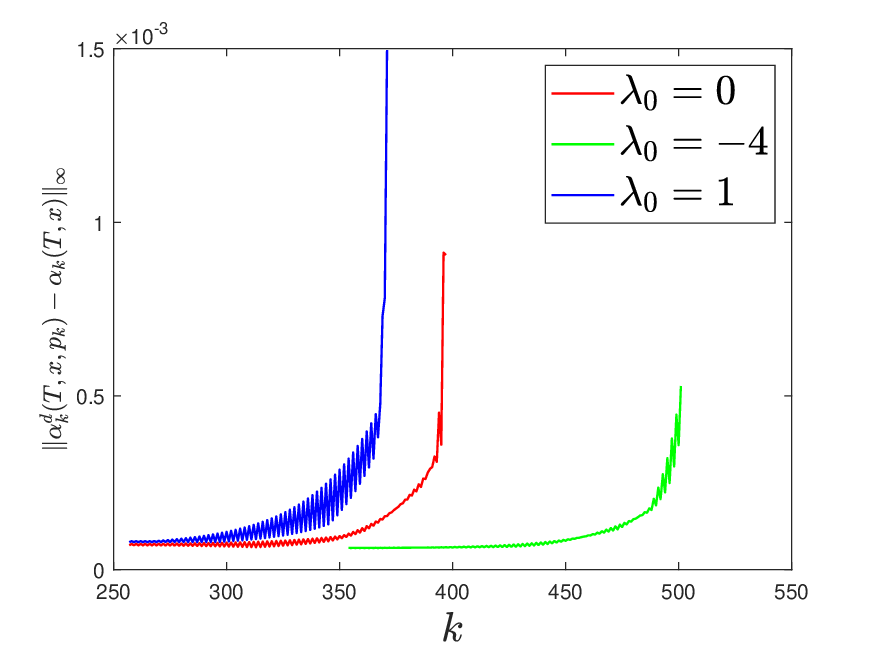}
}
\subfigure[]{
\includegraphics[scale=.5]{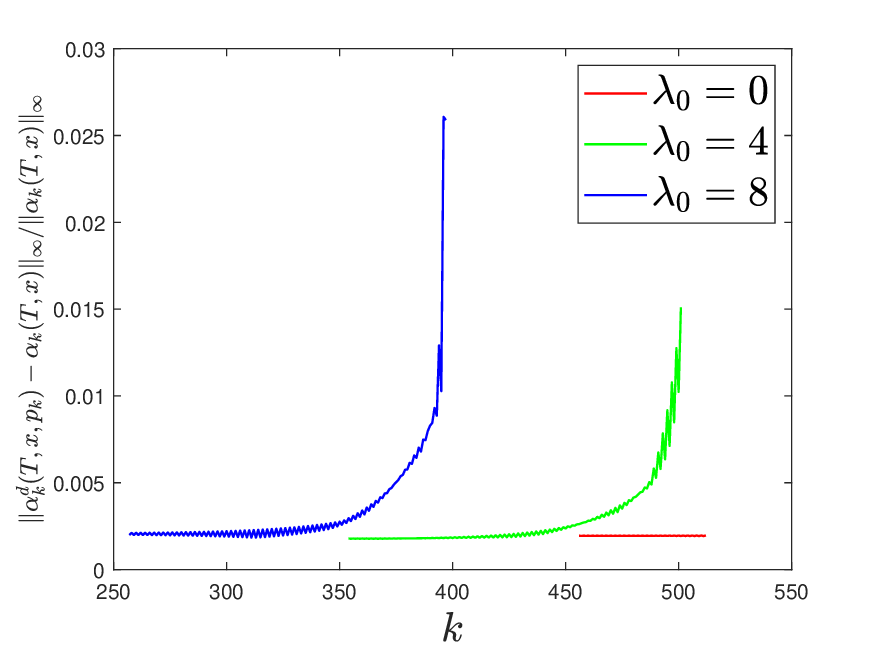}
}
\caption{Plot of the error estimation $\Vert\alpha_k^d(T,x,p_k)-\alpha_k(T,x)\Vert_{\infty}$ for $T=1,m=4,n=9$. (a): $\lambda=-1$ and $p_k\in\{p_j: \Vert\alpha_k^d(T,x,p_j)-\alpha_k(T,x)\Vert_{\infty}\le\Delta p\}$. (b): $\lambda=4$ and $p_k\in\{p_j: \Vert\alpha_k^d(T,x,p_j)-\alpha_k(T,x)\Vert_{\infty}\le\Delta p^2\}$. (c): $\lambda=-4$ and $p_k\in\{p_j: \Vert\alpha_k^d(T,x,p_j)-\alpha_k(T,x)\Vert_{\infty}\le\Delta p\Vert\alpha_k(T,x)\Vert_{\infty}\}$.}
\label{imgeg23}
\end{figure}

\begin{figure}[!ht]
\centering
\subfigure[]{
\includegraphics[scale=.3]{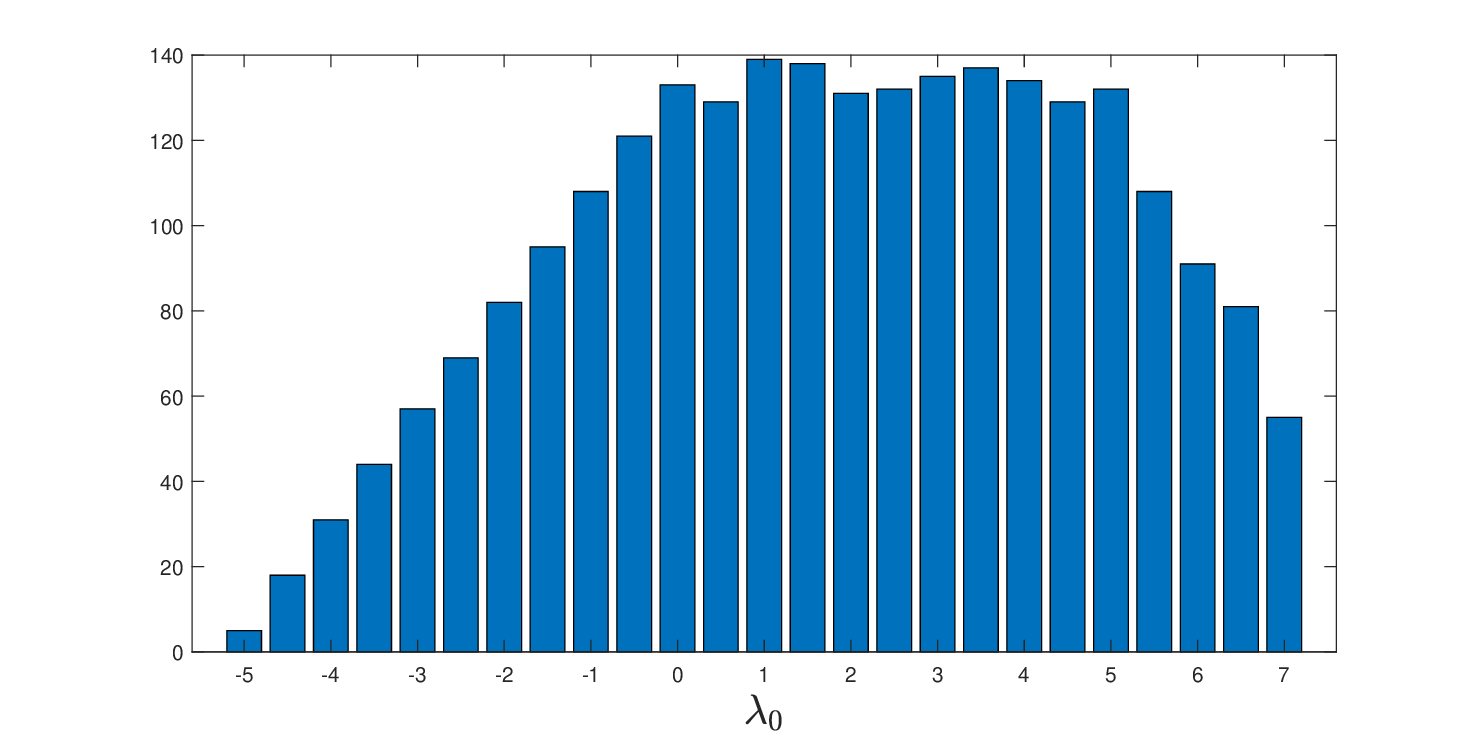}
}
\subfigure[]{
\includegraphics[scale=.3]{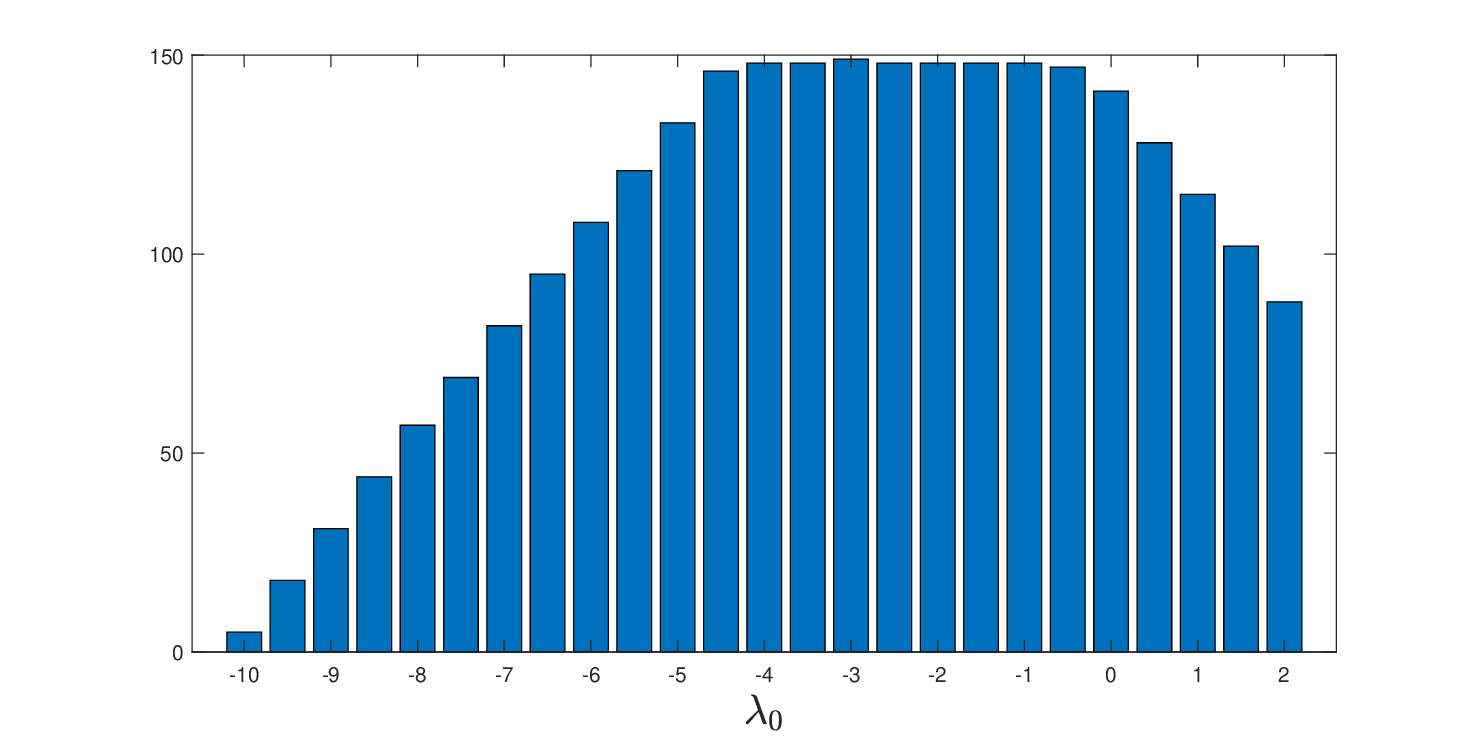}
}
\subfigure[]{
\includegraphics[scale=.3]{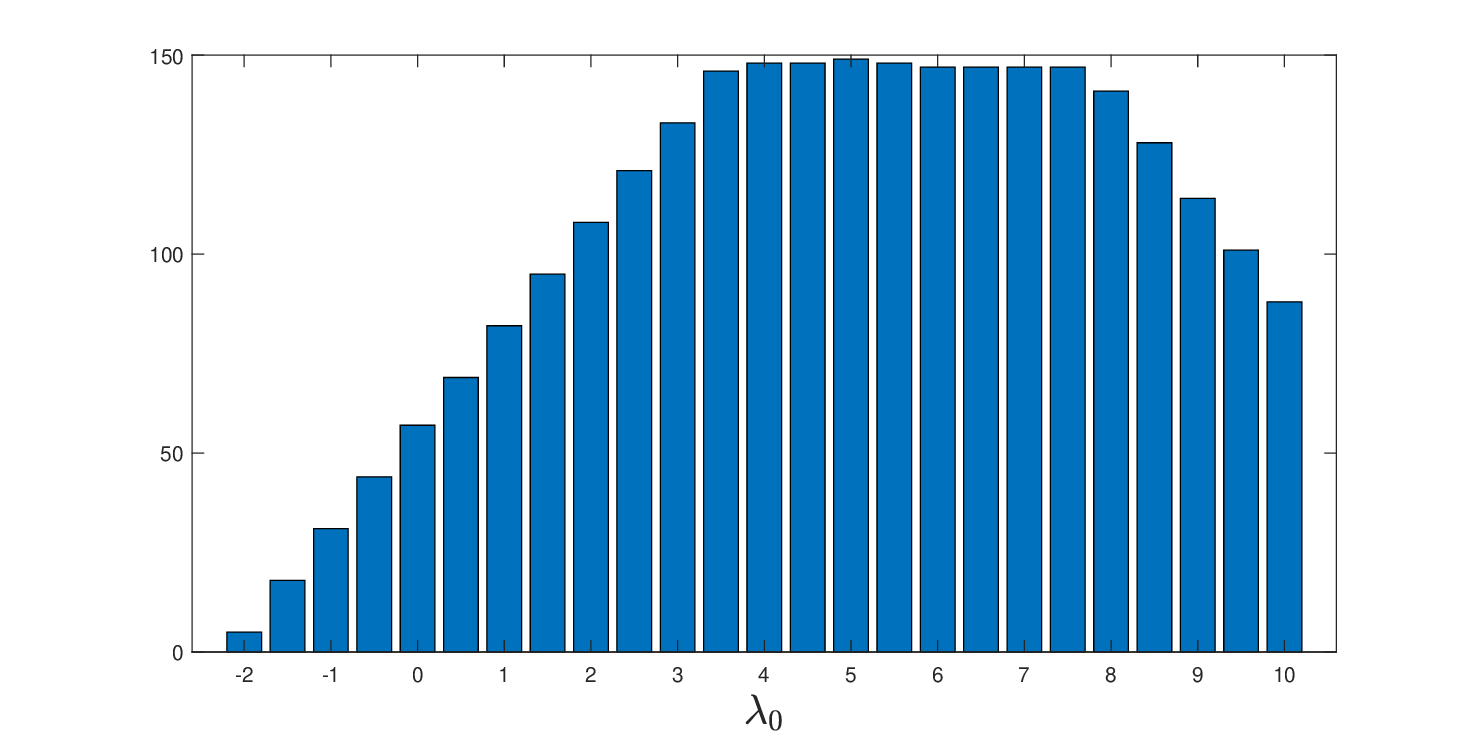}
}
\caption{Plot of the number $\textit{card}\left(\{p_j: \Vert\alpha_k^d(T,x,p_j)-\alpha_k(T,x)\Vert_{\infty}\le\textit{tol}\}\right)$ for $T=1,m=4,n=9$. (a): $\lambda=-1$, $\textit{tol}=\Delta p$. (b): $\lambda=4$, $\textit{tol}=\Delta p^2$. (c): $\lambda=-4$, $\textit{tol}=\Delta p\Vert\alpha_k(T,x)\Vert_{\infty}$.}
\label{imgeg24}
\end{figure}

\section{Vector-valued semi-classical transport equations}
In this section, we will first focus on the case where $u(t,x)\in\mathbb{C}^2,x\in[0,1],t\ge 0,$
\begin{equation}\label{exteqs}
\partial_t u+A(x)\partial_x u=\frac{iE(t,x)}{\varepsilon}Du+Cu,\quad u(0,x)=u_0(x).
\end{equation}
$E$ is a real scalar function, $C$ is a constant matrix and
\begin{equation*}
A(x)=\left[
            \begin{array}{cc}
              a_1(x) & 0 \\
              0 & a_2(x) 
            \end{array}
          \right],\quad
       D=\left[
            \begin{array}{cc}
              0 & 0 \\
              0 & -1
            \end{array}
          \right],\quad
          C=\left[
            \begin{array}{cc}
              C_{11} & C_{12} \\
              C_{21} & C_{22} 
            \end{array}
          \right].
\end{equation*}
To construct the uniformly accurate numerical scheme in~\cite{Nicolas2017Nonlinear}, we use the nonlinear geometric optics (NGO) ansatz:
$$U(t,x,S(t,x)/\varepsilon)=u(t,x).$$
Then, $U=(U_1,U_2)$ satisfies
\begin{align*}
&\partial_{t}U_1+a_1\partial_{x}U_1+\frac{1}{\varepsilon}\left[\partial_t S+a_1\partial_x S\right]\partial_{\tau} U_1=C_{11}U_1+C_{12}U_2,\\
&\partial_{t}U_2+a_2\partial_{x}U_2+\frac{1}{\varepsilon}\left[\partial_t S+a_2\partial_x S\right]\partial_{\tau} U_2=-\frac{iE}{\varepsilon}U_2+C_{21}U_1+C_{22}U_2.
\end{align*}
The equation for the phase $S$ writes
\begin{equation}\label{twodimSeq}
\partial_t S+a_2\partial_x S=E,\ S(0,x)=0,
\end{equation}
and the equations for $(U_1,U_2)$ become
\begin{align*}
&\partial_{t}U_1+a_1\partial_{x}U_1+\frac{1}{\varepsilon}\left[(a_1-a_2)\partial_x S+E\right]\partial_{\tau} U_1=C_{11}U_1+C_{12}U_2,\\
&\partial_{t}U_2+a_2\partial_{x}U_2=-\frac{E}{\varepsilon}\left[\partial_{\tau} U_2+iU_2\right]+C_{21}U_1+C_{22}U_2.
\end{align*}
Setting $V_2=e^{i\tau}U_2$, we finally obtain
\begin{equation}\label{exteqngo}
\begin{aligned}
&\partial_{t}U_1+a_1\partial_{x}U_1-C_{11}U_1-C_{12}e^{-i\tau}V_2=-\frac{1}{\varepsilon}\left[(a_1-a_2)\partial_x S+E\right]\partial_{\tau} U_1,\\
&\partial_{t}V_2+a_2\partial_{x}V_2-C_{21}e^{i\tau}U_1-C_{22}V_2=-\frac{E}{\varepsilon}\partial_{\tau} V_2.
\end{aligned}
\end{equation}

Also, \eqref{exteqngo} needs appropriate initial data $U_1(0,x,\tau)$ and $V_2(0,x,\tau)$.
A suitable initial condition with first order correction for them is provided using the Chapman-Enskog expansion~\cite{Nicolas2017Nonlinear}, and is given by 
\begin{equation}\label{NGOinitial}
\begin{aligned}
&U_1(0,x,\tau)=f_1^{in}+\frac{i\varepsilon EC_{12}}{E^2-\varepsilon^2C_{12}C_{21}}(e^{-i\tau}-1)f_2^{in}\\
&U_2(0,x,\tau)=\frac{i\varepsilon EC_{21}}{E^2-\varepsilon^2C_{12}C_{21}}(e^{-i\tau}-1)f_1^{in}+e^{-i\tau}f_2^{in},
\end{aligned}
\end{equation}
where $u_0(x)=U(0,x,0)=(f_1^{in}(x),f_2^{in}(x))^\top.$

\subsection{Quantum simulation of system \eqref{exteqngo}}

In this section, we consider the quantum algorithm to solve \eqref{exteqngo}.
Since the phase $S$ in \eqref{twodimSeq} can be solved as in section~\ref{one-dim}, we will omit the detailed computation for it and suppose it has been well approximated.
\subsubsection{Quantum simulation with the spectral method}\label{QSNGO}

By applying the Fourier spectral discretisation on $x$ and $\tau$ in \eqref{exteqngo}, we can get
\begin{subequations}
\begin{align}
&\partial_{t} \boldsymbol{u}+i(A_1 P_{x})\otimes I_{M_\tau}\boldsymbol{u}-C_{11}\boldsymbol{u}-C_{12}T_1\boldsymbol{v}=\frac{1}{\varepsilon}{\text{diag}}\left\{(A_1-A_2)P_x \boldsymbol{S}-i\boldsymbol{E}\right\}\otimes P_{\tau} \boldsymbol{u},\label{PDEdis11}\\
&\partial_{t} \boldsymbol{v}+i(A_2 P_{x})\otimes I_{M_\tau}\boldsymbol{v}-C_{21}T_2\boldsymbol{u}-C_{22}\boldsymbol{v}=-\frac{i}{\varepsilon}{\text{diag}}\left\{\boldsymbol{E}\right\}\otimes P_{\tau} \boldsymbol{v},\label{PDEdis12}\\
&\partial_t \boldsymbol{S}+iA_2 P_x \boldsymbol{S}=\boldsymbol{E},\label{PDEdis13}
\end{align}
\end{subequations}
where $\boldsymbol{u}=\sum_{j,k}U_1(t,x_j,\tau_k)\ket{j}\ket{k},\boldsymbol{v}=\sum_{j,k}V_2(t,x_j,\tau_k)\ket{j}\ket{k}$ and $\boldsymbol{S}=\sum_{j} S(t,x_j)\ket{j}$.
The matrices $A_1,A_2,\boldsymbol{E}$ and $T_1,T_2$ are defined by
$$A_1={\text {diag}}\left\{\sum\nolimits_{j} a_1(x_j)\ket{j}\right\},A_2={\text{diag}}\left\{\sum\nolimits_{j} a_2(x_j)\ket{j}\right\},\boldsymbol{E}=\sum\nolimits_{j} E(x_j)\ket{j},$$
$$T_1=I_{M_x}\otimes {\text{diag}}\left\{\sum\nolimits_{k}e^{-i\tau_k}\ket{k}\right\},T_2=I_{M_x}\otimes {\text {diag}}\left\{\sum\nolimits_{k}e^{i\tau_k}\ket{k}\right\}=T_1^\dagger.$$
Then we can further write (\ref{PDEdis11}-\ref{PDEdis12}) in vector form as $\dot{\boldsymbol{z}}=M\boldsymbol{z}$ where $\boldsymbol{z}=\left[\boldsymbol{u};\boldsymbol{v}\right]$ and
\begin{align*}
M&=\sigma_{00}\otimes(-i(A_1 P_{x})\otimes I_{M_\tau}+C_{11}I_{M_x}\otimes I_{M_\tau}+\frac{1}{\varepsilon}{\text{diag}}\left\{(A_1-A_2)P_x \boldsymbol{S}-i\boldsymbol{E}\right\}\otimes P_{\tau})\\
&+\sigma_{11}\otimes(-i(A_2 P_{x})\otimes I_{M_\tau}+C_{22}I_{M_x}\otimes I_{M_\tau}-\frac{i}{\varepsilon}{\text{diag}}\left\{\boldsymbol{E}\right\}\otimes P_{\tau})\\
&+C_{12}\sigma_{01}\otimes T_1+C_{21}\sigma_{10}\otimes T_2.
\end{align*}
Moreover, we can find
\begin{align*}
H_1&=\sigma_{00}\otimes \left({\text{Re}}(C_{11})I_{M_x}\otimes I_{M_\tau}-\frac{iA_1P_x-iP_xA_1}{2}\otimes I_{M_\tau}\right)\\
&+\sigma_{11}\otimes \left({\text{Re}}(C_{22})I_{M_x}\otimes I_{M_\tau}-\frac{iA_2P_x-iP_xA_2}{2}\otimes I_{M_\tau}\right)\\
&+\frac{C_{12}+\overline{C_{21}}}{2}\sigma_{01}\otimes T_1+\frac{\overline{C_{12}}+C_{21}}{2}\sigma_{10}\otimes T_2,
\end{align*}
and
\begin{align*}
H_2&=\sigma_{00}\otimes\left({\text{Im}}(C_{11})I_{M_x}\otimes I_{M_\tau}-\frac{A_1P_x+P_xA_1}{2}\otimes I_{M_\tau}-\frac{1}{\varepsilon}{\text{diag}}\left\{i(A_1-A_2)P_x \boldsymbol{S}+\boldsymbol{E}\right\}\otimes P_{\tau}\right)\\
&+\sigma_{11}\otimes\left({\text{Im}}(C_{22})I_{M_x}\otimes I_{M_\tau}-\frac{A_2P_x+P_xA_2}{2}\otimes I_{M_\tau}-\frac{1}{\varepsilon}{\text{diag}}\left\{\boldsymbol{E}\right\}\otimes P_{\tau}\right)\\
&+i\frac{\overline{C_{21}}-C_{12}}{2}\sigma_{01}\otimes T_1+i\frac{\overline{C_{12}}-C_{21}}{2}\sigma_{10}\otimes T_2.
\end{align*}
Then the homogeneous system can be solved by Schr{\"o}dingerisation with a well approximated $\boldsymbol{S}$.
Since the corresponding Hermitian matrix of $M$ is time-dependent, strategy in section~\ref{timedepway} can be taken.
On the other hand, Schr{\"o}dingerisation can also be used for $\boldsymbol{S}$ via \eqref{PDEdis13}.
When $A_1,A_2$ are both constant matrices, it's easy to check that 
$$\lambda(H_1)=\frac{{\text{Re}}(C_{11}+C_{22})+\sqrt{({\text{Re}}(C_{11}-C_{22}))^2+\vert C_{12}+\overline{C_{21}}\vert^2}}{2}$$ 
so that $H_1$ will be a zero matrix, i.e. $M=iH_2$ if ${\text{Re}}(C_{11})={\text{Re}}(C_{22})=C_{12}+\overline{C_{21}}=0$.
Thus, in this case the warped phase transformation is not needed for $\boldsymbol{z}$.

%
%
%

\subsubsection{Quantum simulation with the finite difference discretisation}

By denoting $U_{j,k}(t)\approx U_1(t,x_j,\tau_k)$ and $V_{j,k}\approx V_2(t,x_j,\tau_k)$, the approximations to \eqref{exteqngo} can be obtained by the following upwind numerical scheme
\begin{equation}\label{PDEFDM}
\begin{aligned}
&\frac{d U_{j,k}}{dt}+a_1(x_j)\frac{U_{j,k}-U_{j-1,k}}{\Delta x}-C_{11}U_{j,k}-C_{12}e^{-i\tau_k}V_{j,k}\\
&=-\frac{1}{\varepsilon}\left(E_j+(a_1(x_j)-a_2(x_j))\partial_x S(x_j)\right)\frac{U_{j,k}-U_{j,k-1}}{\Delta \tau},\\
&\frac{d V_{j,k}}{d t}+a_2(x_j)\frac{V_{j,k}-V_{j-1,k}}{\Delta x}-C_{21}e^{i\tau_k}U_{j,k}-C_{22}V_{j,k}\\
&=-\frac{1}{\varepsilon}E_j\frac{V_{j,k}-V_{j,k-1}}{\Delta \tau},\\
\end{aligned}
\end{equation}
where $E_j=E(t,x_j)$.
Certainly, high-order methods should be used for the approximation of $S$. 
By further denoting $\boldsymbol{u}(t)=\sum_{j,k}U_{j,k}(t)\ket{j}\ket{k},\boldsymbol{v}(t)=\sum_{j,k}V_{j,k}(t)\ket{j}\ket{k}$ and $\boldsymbol{z}=\left[\boldsymbol{u};\boldsymbol{v}\right]$, the above scheme can be written in vector form as $\dot{\boldsymbol{z}}=M\boldsymbol{z}$
where
\begin{align*}
M&=\sigma_{00}\otimes(-(A_1 D_x^{-})\otimes I_{M_\tau}+C_{11}I_{M_x}\otimes I_{M_\tau}-\frac{1}{\varepsilon}{\text {diag}}\left\{\boldsymbol{E}+(A_1-A_2)\partial_x\boldsymbol{S}\right\}\otimes D_\tau^{-})\\
&+\sigma_{11}\otimes(-(A_2 D_x^{-})\otimes I_{M_\tau}+C_{22}I_{M_x}\otimes I_{M_\tau}-\frac{1}{\varepsilon}{\text {diag}}\left\{\boldsymbol{E}\right\}\otimes D_\tau^{-})\\
&+C_{12}\sigma_{01}\otimes T_1+C_{21}\sigma_{10}\otimes T_2.
\end{align*}
Meanwhile, we can get
\begin{align*}
H_1&=\sigma_{00}\otimes\left(\left(\frac{D_x^{+}A_1-A_1 D_x^{-}}{2}\right)\otimes I_{M_\tau}+C_{11}I_{M_x}\otimes I_{M_\tau}+\frac{1}{\varepsilon}{\text {diag}}\left\{\boldsymbol{E}+(A_1-A_2)\partial_x\boldsymbol{S}\right\}\otimes \frac{D_\tau^{+}-D_\tau^{-}}{2}\right)\\
&+\sigma_{11}\otimes\left(\left(\frac{D_x^{+}A_2-A_2 D_x^{-}}{2}\right)\otimes I_{M_\tau}+C_{22}I_{M_x}\otimes I_{M_\tau}+\frac{1}{\varepsilon}{\text {diag}}\left\{\boldsymbol{E}\right\}\otimes \frac{D_\tau^{+}-D_\tau^{-}}{2}\right)\\
&+\frac{C_{12}+C_{21}}{2}\sigma_{01}\otimes T_1+\frac{C_{12}+C_{21}}{2}\sigma_{10}\otimes T_2,
\end{align*}
and
\begin{align*}
H_2&=i\sigma_{00}\otimes\left(\left(\frac{D_x^{+}A_1+A_1 D_x^{-}}{2}\right)\otimes I_{M_\tau}+\frac{1}{\varepsilon}{\text {diag}}\left\{\boldsymbol{E}+(A_1-A_2)\partial_x\boldsymbol{S}\right\}\otimes \frac{D_\tau^{+}+D_\tau^{-}}{2}\right)\\
&+i\sigma_{11}\otimes\left(\left(\frac{D_x^{+}A_2+A_2 D_x^{-}}{2}\right)\otimes I_{M_\tau}+\frac{1}{\varepsilon}{\text {diag}}\left\{\boldsymbol{E}\right\}\otimes \frac{D_\tau^{+}+D_\tau^{-}}{2}\right)\\
&+i\frac{C_{21}-C_{12}}{2}\sigma_{01}\otimes T_1+i\frac{C_{12}-C_{21}}{2}\sigma_{10}\otimes T_2.
\end{align*}

\subsection{Numerical results}

In this section, we will present the numerical experiments for the new approach.
The parameters in \eqref{exteqs} are chosen as $a_1(x)=1,a_2(x)=4,E(t,x)=1.5+\cos(x)$, and
\begin{equation*}
C=\left[
            \begin{array}{cc}
              0 & 1 \\
              -1 & 0 
            \end{array}
          \right].
\end{equation*}
The initial condition is
$$u(0,x)=\left(1+\frac{1}{2}\cos(x)+i\sin(x),1+\frac{1}{2}\cos(x)+i\sin(x)\right),\quad x\in[0,2\pi].$$

In the following figures, we compare a reference solution (computed with time splitting method with resolved numerical computation) and the solution of our quantum algorithm with the corrected initial condition \eqref{NGOinitial}. 
For the reference solution, we set the cell number as $M_x=2^8$.

In Figure~\ref{imgeg31} , we plot the real and imaginary part of components of the solution as a function of space for final time $T=1$. 
We can observe that the our method is able to capture the high space oscillations of the solution with a spatial mesh independent of $\varepsilon$.

\begin{figure}[!ht]
\centering
\subfigure[]{
\includegraphics[scale=.3]{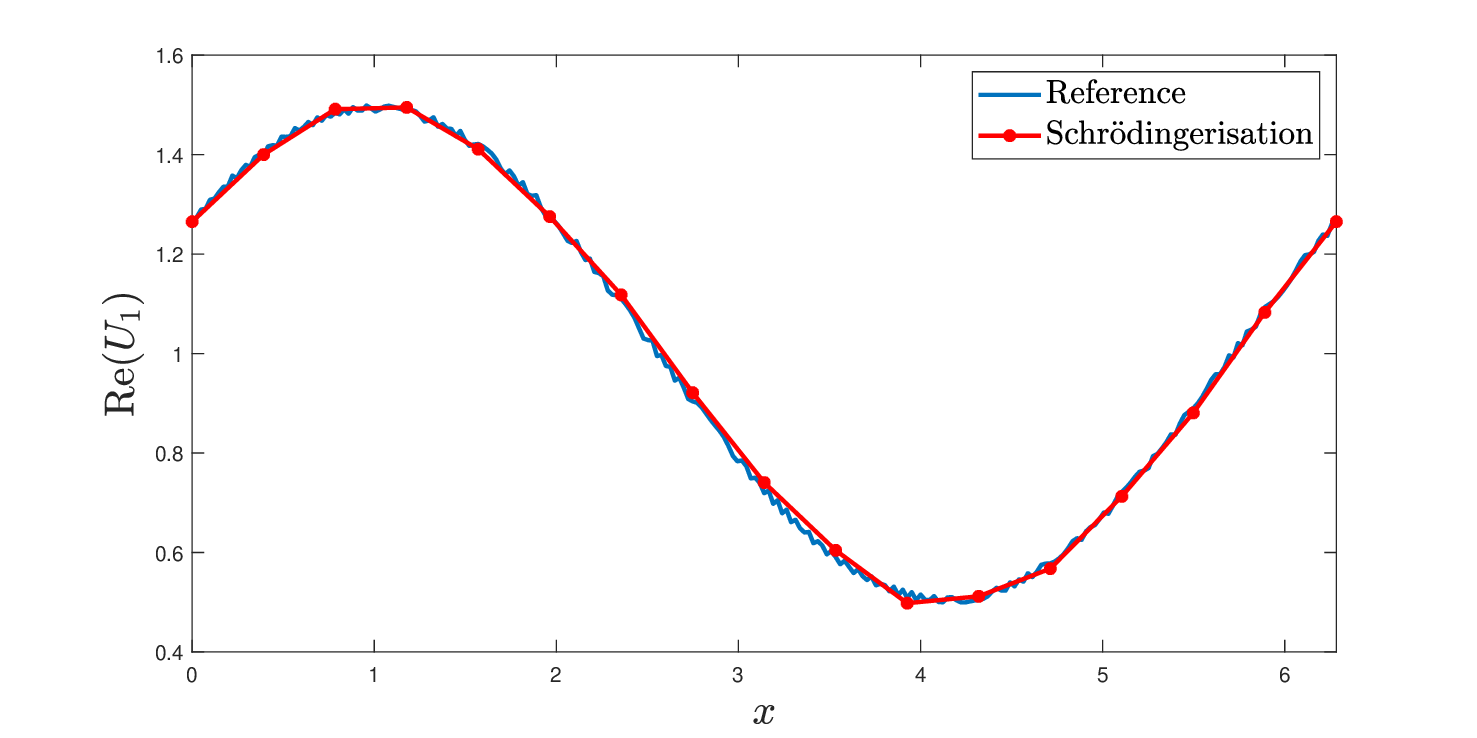}
}
\subfigure[]{
\includegraphics[scale=.3]{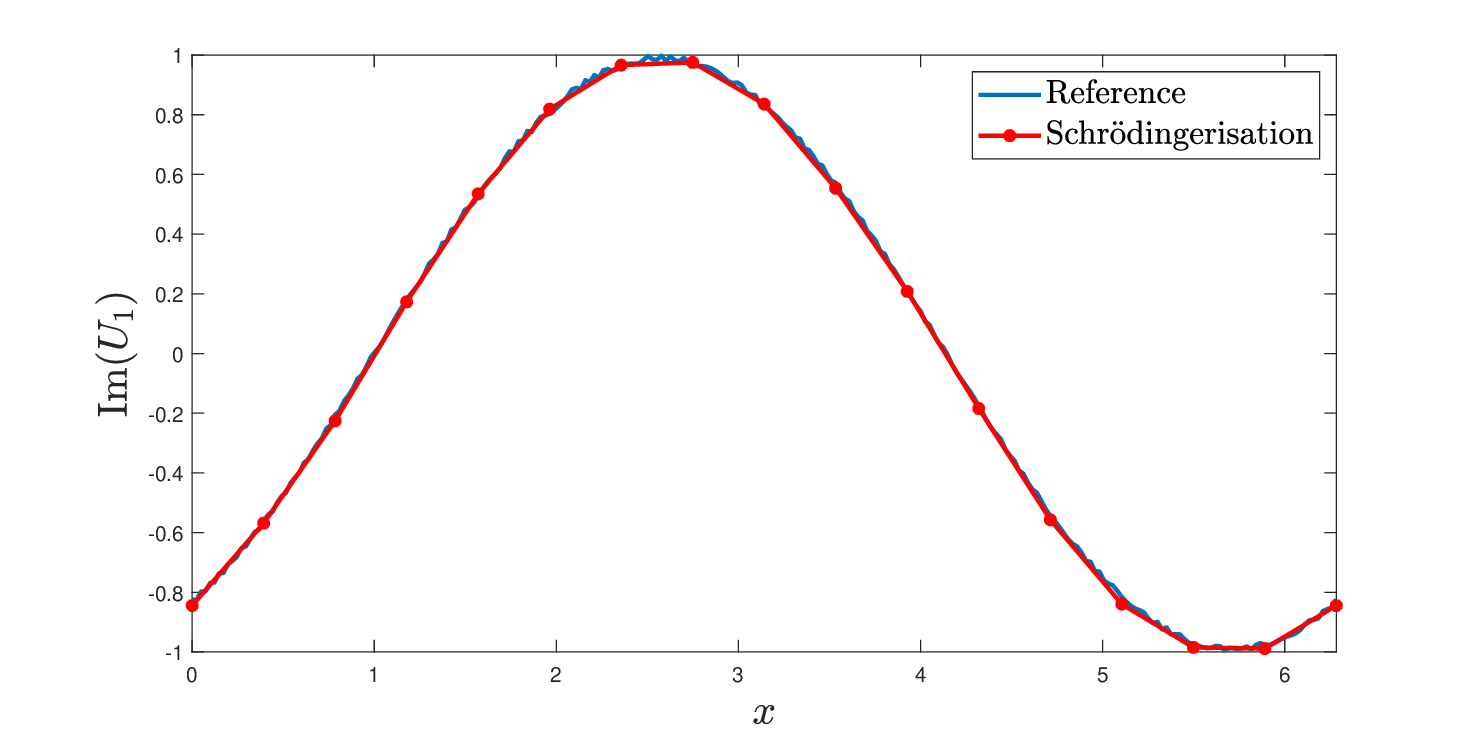}
}
\subfigure[]{
\includegraphics[scale=.3]{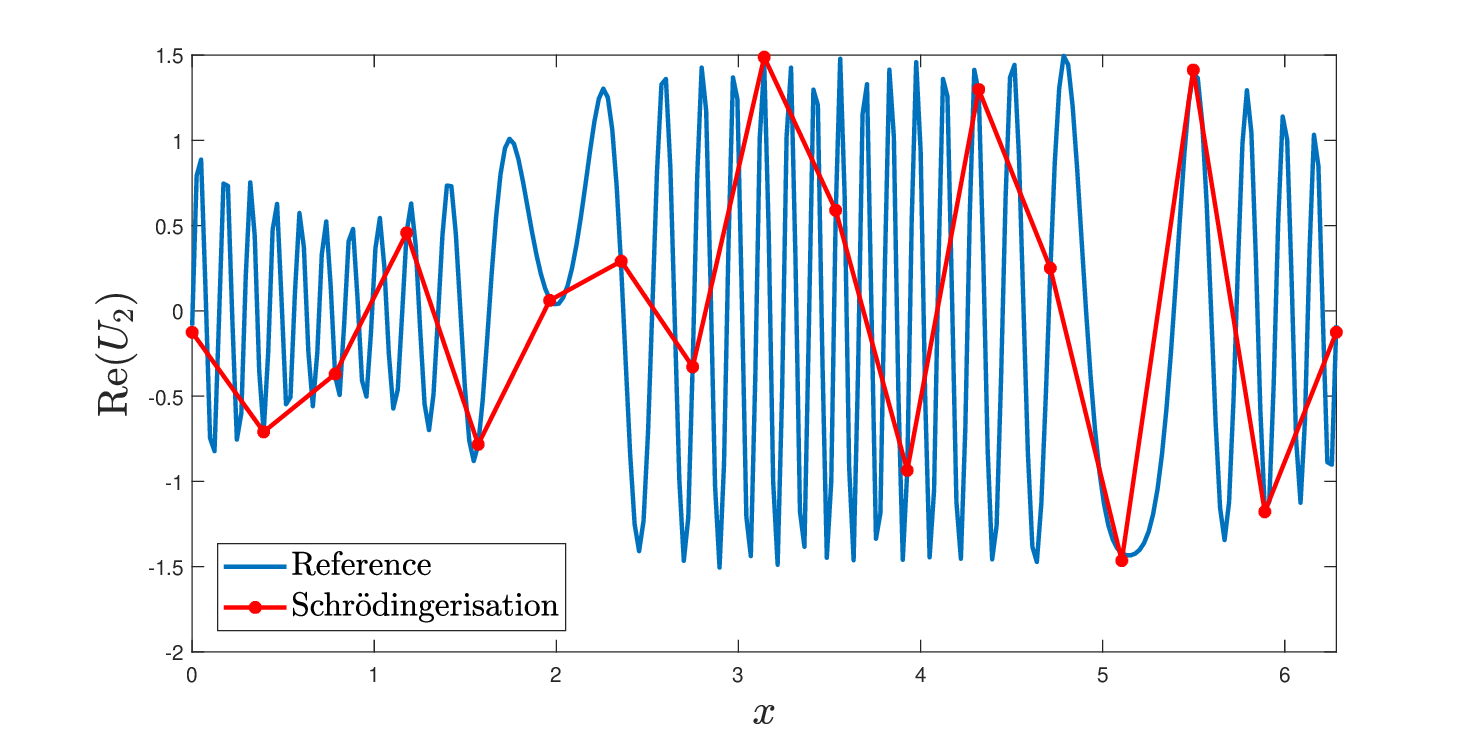}
}
\subfigure[]{
\includegraphics[scale=.3]{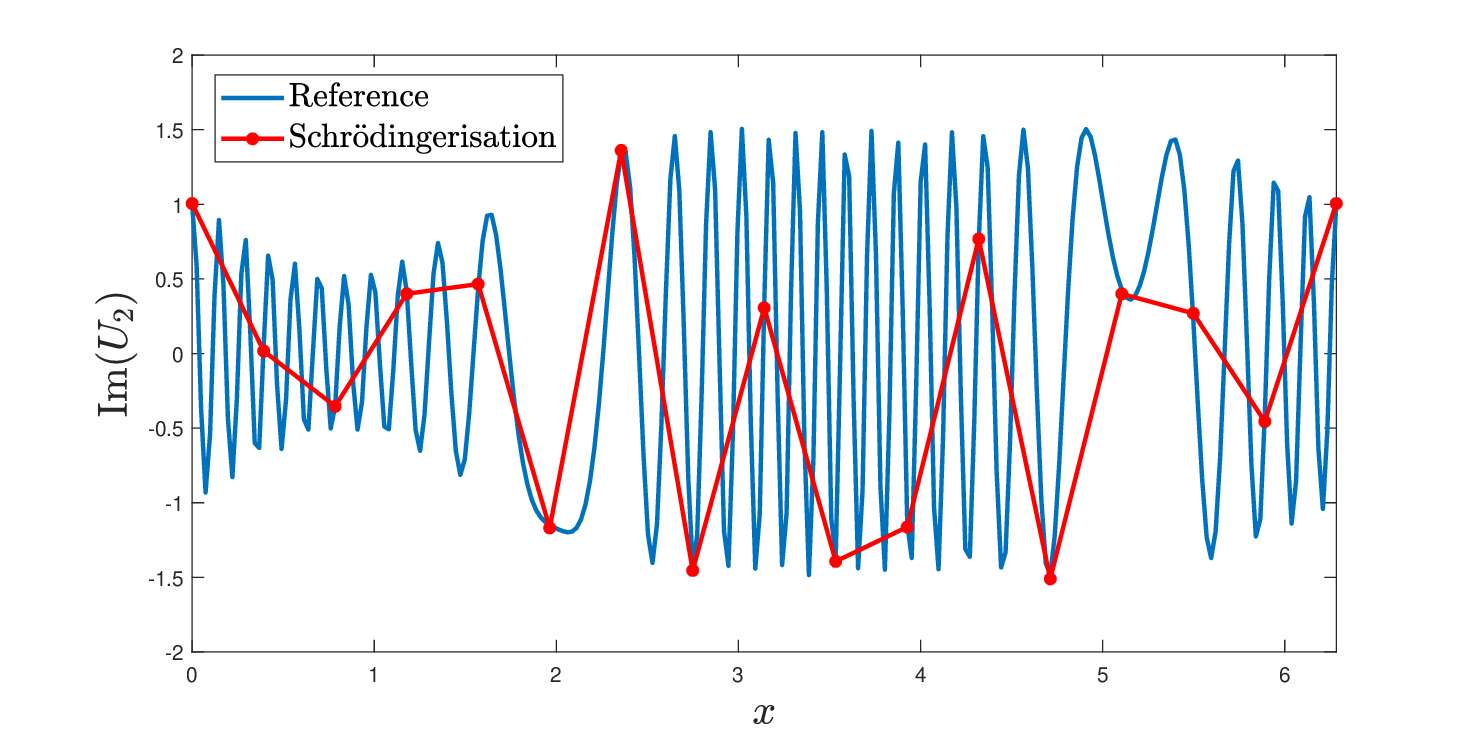}
}
\caption{Comparison between the reference solution and the numerical solution (with exact solution for $S$) for $\varepsilon=0.01,T=1,l=3,m=4$. (a), (b): $U_1$.
(c), (d): $U_2$.}
\label{imgeg31}
\end{figure}

Then, we illustrate the performances of the Schr{\"o}dingerisation method by using the same data as before except the constant matrix
\begin{equation*}
C=\frac{1}{2}\left[
            \begin{array}{cc}
              1+i & 1+i \\
              -1+i & 1-i 
            \end{array}
          \right].
\end{equation*}
The computational domain of $p$ is $[-5,5]$.
According to Corollary~\ref{lambdachoice}, we choose $\lambda_0=\frac{T}{2}$ and non-negative $p_k$ to recover the solution $\boldsymbol{z}$.
As shown in Figure~\ref{imgeg32}, the Schr{\"o}dingerisation method captures the high space oscillations of the solution, point-wise,  well even if the spatial mesh $\Delta{x}=\frac{2\pi}{2^4}$ is large compared to the size of the oscillation ($\varepsilon=0.01$).

\begin{figure}[!ht]
\centering
\subfigure[]{
\includegraphics[scale=.3]{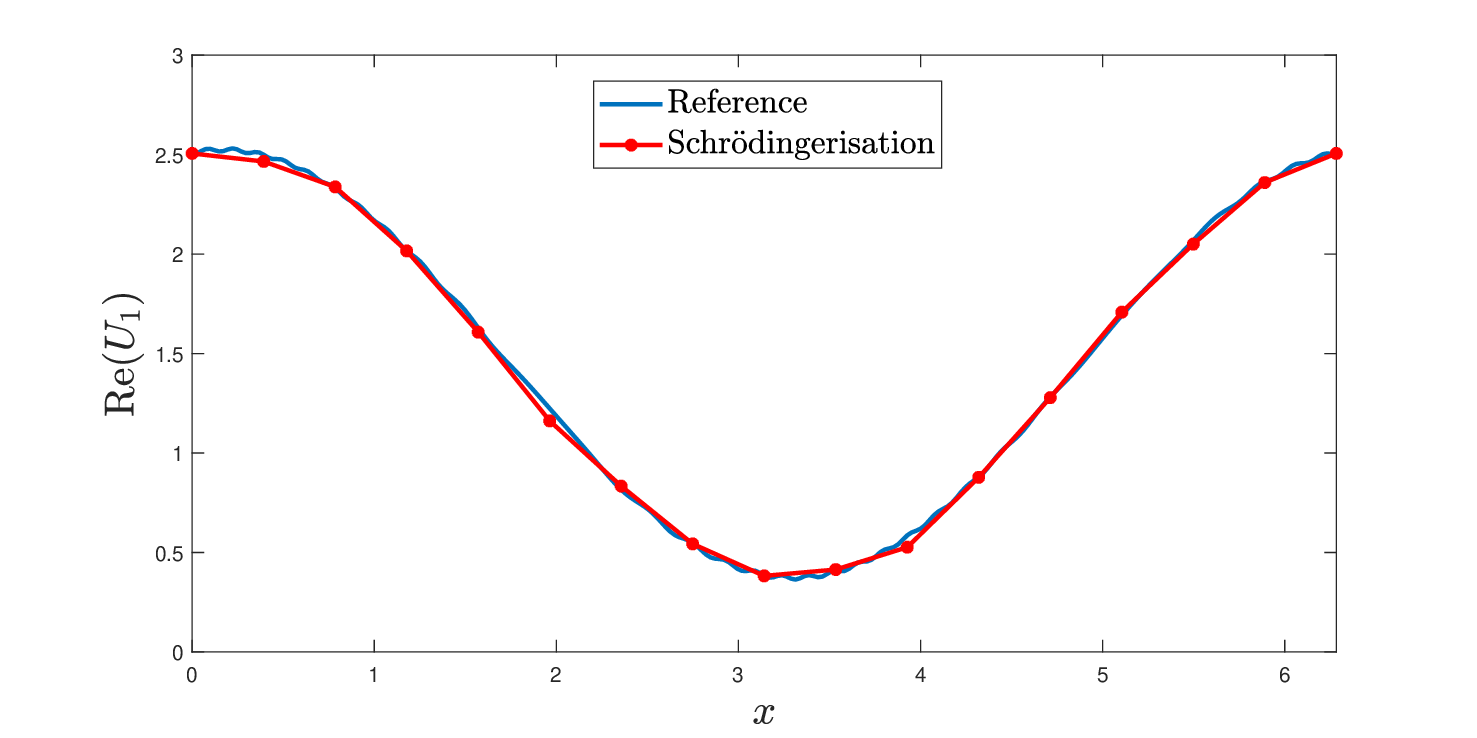}
}
\subfigure[]{
\includegraphics[scale=.3]{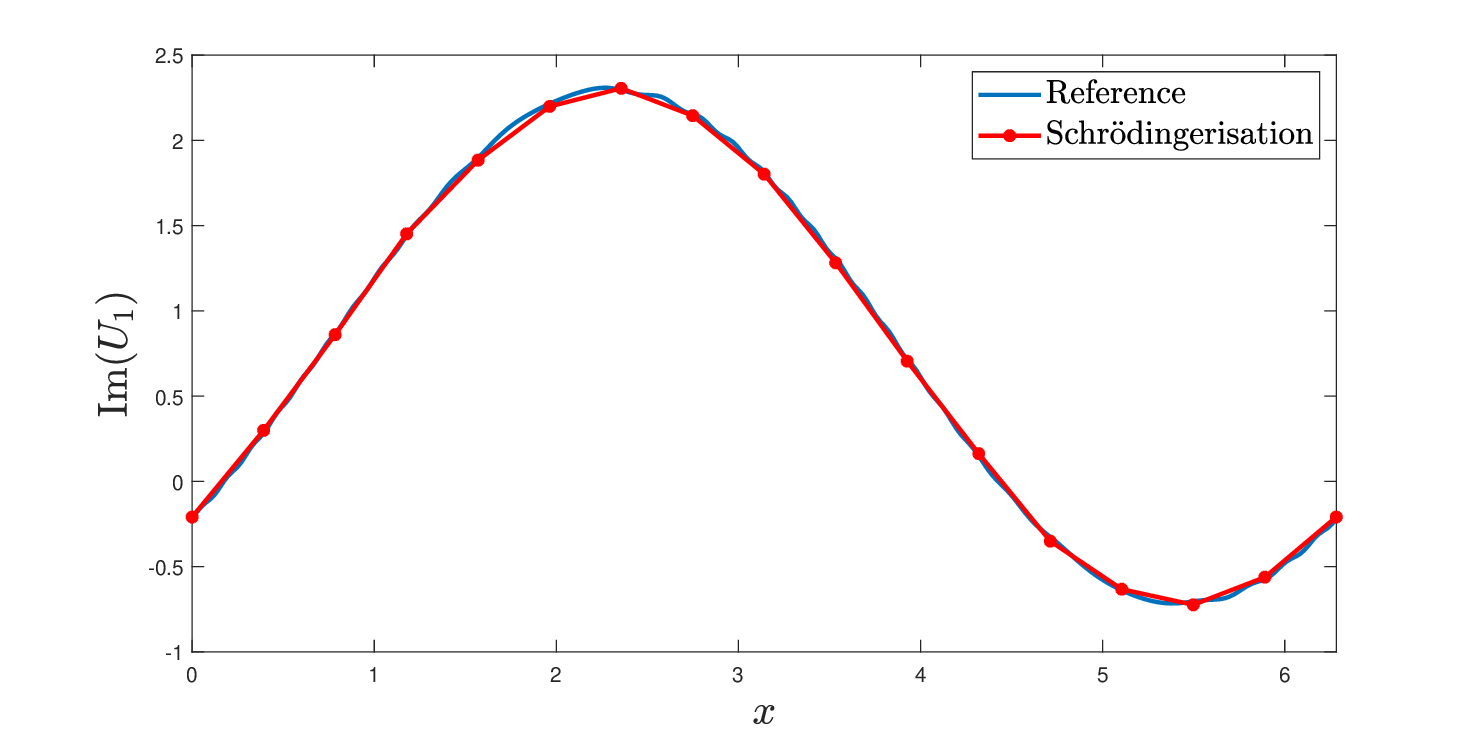}
}
\subfigure[]{
\includegraphics[scale=.3]{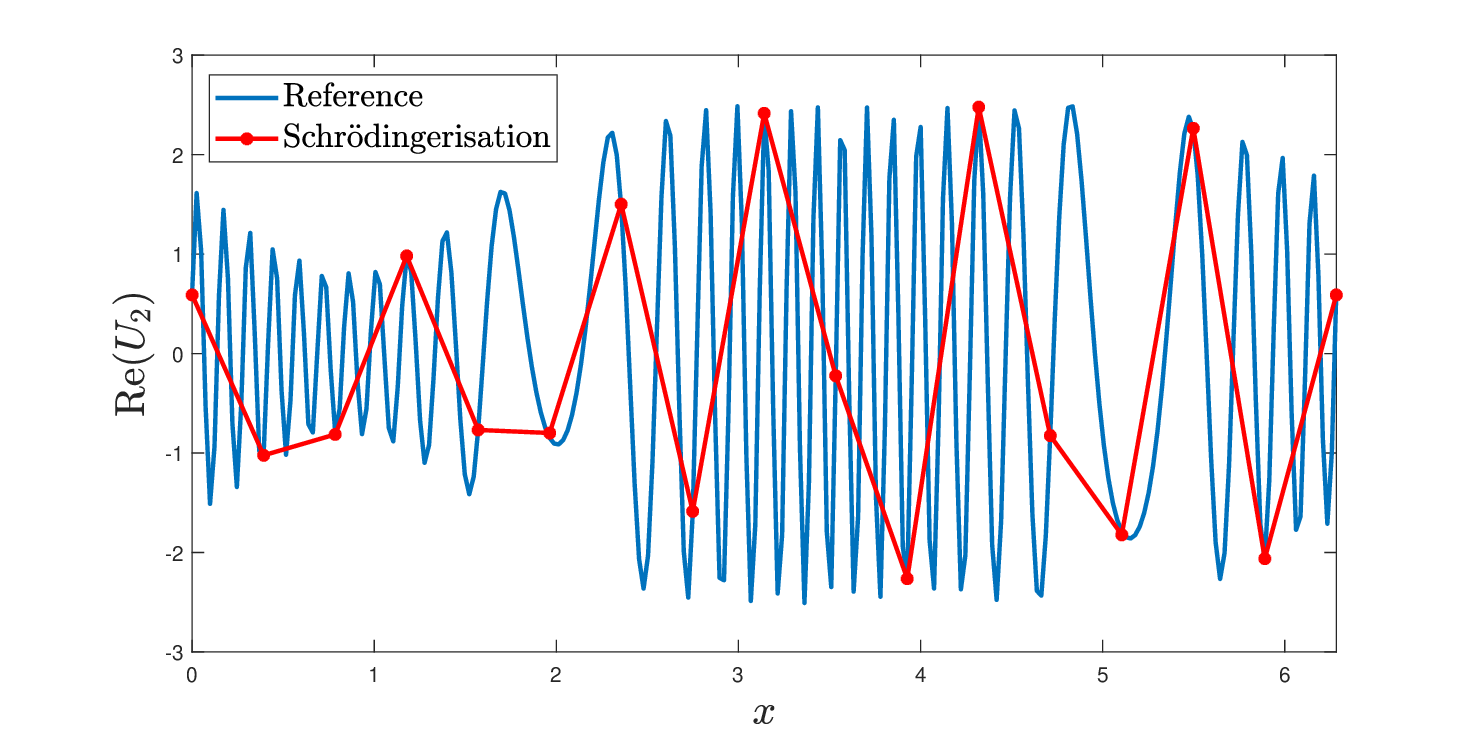}
}
\subfigure[]{
\includegraphics[scale=.3]{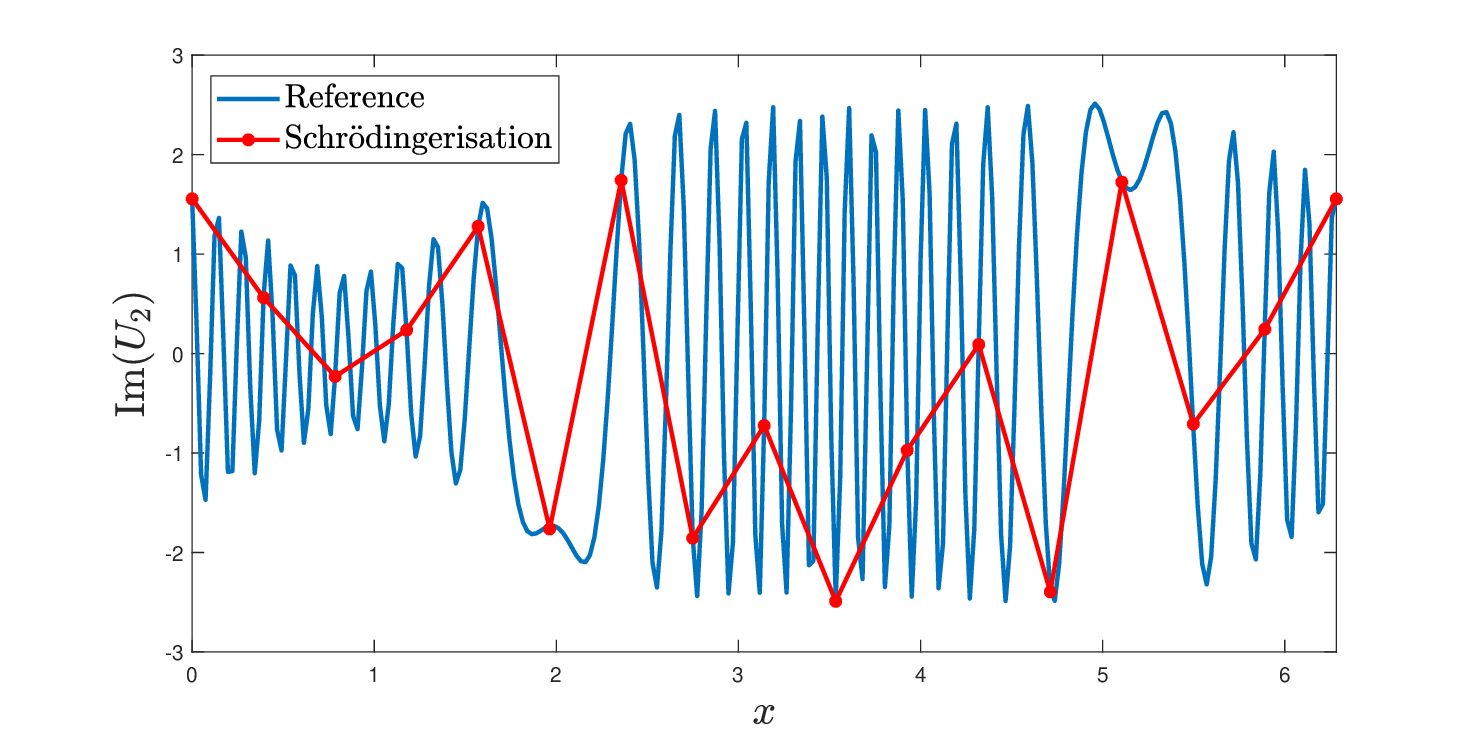}
}
\caption{Comparison between the reference solution and the numerical solution of Schr{\"o}dingerisation (with exact solution for $S$) for $\varepsilon=0.01,T=1,l=3,m=4,n=8$. (a), (b): $U_1$.
(c), (d): $U_2$.}
\label{imgeg32}
\end{figure}

\subsection{Quantum simulation for the surface hopping model}
Finally, we show that the general quantum approach described above can be extended to efficiently solve the following semi-classical surface hopping model.
This model approximates semiclassically the nucleaonic Schr{\"o}dinger system arising from the Born-Oppenheimer approximation with non-adiabatic corrections. For more details see~\cite{Chai2015}.
Specifically, this model can be expressed as the following equations,
\begin{equation}\label{hopping}
\begin{aligned}
&\partial_t f^{+}+v\cdot\nabla_x f^{+}-\nabla_x(U+E)\cdot\nabla_v f^{+}=\overline{b^{i}}f^{i}+b^{i}\overline{f^{i}},\\
&\partial_t f^{-}+v\cdot\nabla_x f^{-}-\nabla_x(U-E)\cdot\nabla_v f^{-}=-\overline{b^{i}}f^{i}-b^{i}\overline{f^{i}},\\
&\partial_t f^{i}+v\cdot\nabla_x f^{i}-\nabla_x U\cdot\nabla_v f^{i}=-i\frac{2E}{\varepsilon}f^{i}+b^{i}(f^{-}-f^{+})+(b^{+}-b^{-})f^{i},
\end{aligned}
\end{equation}
where $(f^{+}(t,x,v),f^{-}(t,x,v),f^{i}(t,x,v))\in\mathbb{R}\times\mathbb{R}\times\mathbb{C},(t,x,v)\in\mathbb{R}_{+}\times\mathbb{R}^d\times\mathbb{R}^d$, and $b^{\pm},b^{i}\in\mathbb{C},U,E\in\mathbb{R}$ are given functions depending only on the space variable $x$.
The initial conditions are denoted by 
$$(f^{+}(0,x,v),f^{-}(0,x,v),f^{i}(0,x,v))=(f^{+}_{in}(x,v),f^{-}_{in}(x,v),f^{i}_{in}(x,v)).$$
In vector form, \eqref{hopping} becomes
\begin{equation}\label{hoppingvec}
\partial_t \boldsymbol{f}+\boldsymbol{v}\cdot\nabla_x \boldsymbol{f}-\nabla_x A\cdot\nabla_v \boldsymbol{f}=C\boldsymbol{f},
\end{equation}
where
$$\boldsymbol{f}=(f^{+},f^{-},\text{Re}(f^{i}),\text{Im}(f^{i}))^\top,\ \boldsymbol{v}=(v,v,v,v)^\top,\ A={\text{diag}}(U+E,U-E,U,U),$$
$$C=\left[
            \begin{array}{cccc}
              0\quad & 0\quad & b^{i}+\overline{b^{i}}\quad & -ib^{i}+i\overline{b^{i}}\\
              0\quad & 0\quad & -b^{i}-\overline{b^{i}}\quad & ib^{i}-i\overline{b^{i}}\\
             -b^{i}\quad & b^{i}\quad & b^{+}-b^{-}\quad & 2E/\varepsilon\\
             0\quad & 0\quad & -2E/\varepsilon\quad & b^{+}-b^{-}\\
            \end{array}
          \right].$$
A direct time splitting scheme for \eqref{hoppingvec} writes 
\begin{itemize}
    \item solve $\partial_t \boldsymbol{f}+\boldsymbol{v}\cdot\nabla_x \boldsymbol{f}=0$ by spectral method in space and exact integration in time,
    \item solve $\partial_t \boldsymbol{f}-\nabla_x A\cdot\nabla_v \boldsymbol{f}=0$ by spectral method in velocity and exact integration in time,
    \item solve $\partial_t \boldsymbol{f}=C\boldsymbol{f}$ by exact integration.
\end{itemize}

For an explanation of quantum simulation on \eqref{hoppingvec}, we consider $b^{i}\in\mathbb{R},b^{\pm}=0$ which corresponds to two potential matrices introduced in~\cite{Chai2015}.
By introducing a phase $S(t,x,v)$, solution to
\begin{equation}\label{hoppingS}
\partial_t S+v\cdot\nabla_x S-\nabla_x U\cdot\nabla_v S=2E,\ S(0,x,v)=0,
\end{equation}
and the augmented unknowns $(F^{\pm},F^{i})(t,x,v,\tau)$ with $\tau=S(t,x,v)/\varepsilon$ satisfying
$$f^{\pm}(t,x,v)=F^{\pm}(t,x,v,\tau),\ f^{i}(t,x,v)=e^{-i\tau}(G+iH)(t,x,v,\tau),$$
where $G=\text{Re}(e^{i\tau}f^{i}),H=\text{Im}(e^{i\tau}f^{i})$.
Then it has
\begin{equation}\label{hoppingF}
\begin{aligned}
&\partial_t F^{+}+v\cdot\nabla_x F^{+}-\nabla_x(U+E)\cdot\nabla_v F^{+}=-\frac{\mathcal{E}^{+}}{\varepsilon}\partial_{\tau}F^{+}+2b^{i}G\cos{\tau}+2b^{i}H\sin{\tau},\\
&\partial_t F^{-}+v\cdot\nabla_x F^{-}-\nabla_x(U-E)\cdot\nabla_v F^{-}=-\frac{\mathcal{E}^{-}}{\varepsilon}\partial_{\tau}F^{-}-2b^{i}G\cos{\tau}-2b^{i}H\sin{\tau},\\
&\partial_t G+v\cdot\nabla_x G-\nabla_x U\cdot\nabla_v G=-\frac{2E}{\varepsilon}\partial_{\tau}G+b^{i}(F^{-}-F^{+})\cos{\tau},\\
&\partial_t H+v\cdot\nabla_x H-\nabla_x U\cdot\nabla_v H=-\frac{2E}{\varepsilon}\partial_{\tau}H+b^{i}(F^{-}-F^{+})\sin{\tau},
\end{aligned}
\end{equation}
where $\mathcal{E}^{\pm}=2E\mp\nabla_x E\cdot\nabla_v S$.
The initial data can be derived from the Chapman-Enskog expansion which yields
\begin{equation}\label{hoppinginitial}
\begin{aligned}
&F^{+}(0,x,v,\tau)=f^{+}_{in}-i\frac{\varepsilon}{2E}\left(b^{i}f^{i}_{in}(1-e^{-i\tau})-b^{i}\overline{f^{i}_{in}}(1-e^{i\tau})\right),\\
&F^{-}(0,x,v,\tau)=f^{-}_{in}+i\frac{\varepsilon}{2E}\left(b^{i}f^{i}_{in}(1-e^{-i\tau})-b^{i}\overline{f^{i}_{in}}(1-e^{i\tau})\right),\\
&G(0,x,v,\tau)=\text{Re}(f^{i}_{in})-\frac{\varepsilon}{2E}b^{i}(f^{+}_{in}-f^{-}_{in})\sin{\tau},\\
&H(0,x,v,\tau)=\text{Im}(f^{i}_{in})+\frac{\varepsilon}{2E}b^{i}(f^{+}_{in}-f^{-}_{in})(\cos{\tau}-1).
\end{aligned}
\end{equation}

Then we apply the Schr{\"o}dingerisation method for \eqref{hoppingF} with initial data \eqref{hoppinginitial}.
Without loss of generality, we set $d=1$.
By denoting $\boldsymbol{g}=(F^{+},F^{-},G,H)^\top$, \eqref{hoppingF} becomes the following vector form
\begin{equation}\label{hoppingvector2}
\partial_t \boldsymbol{g}+v\partial_x \boldsymbol{g}+A\partial_v \boldsymbol{g}+B\partial_{\tau}\boldsymbol{g}=C\boldsymbol{g},
\end{equation}
where
$$A=\partial_x {\text{diag}} (-U-E,-U+E,-U,-U),\ B={\text{diag}} \left(\frac{\mathcal{E}^{+}}{\varepsilon},\frac{\mathcal{E}^{-}}{\varepsilon},\frac{2E}{\varepsilon},\frac{2E}{\varepsilon}\right),$$
$$C=\left[
            \begin{array}{cccc}
              0\quad & 0\quad & 2b^{i}\cos{\tau}\quad & 2b^{i}\sin{\tau}\\
              0\quad & 0\quad & -2b^{i}\cos{\tau}\quad & -2b^{i}\sin{\tau}\\
              -b^{i}\cos{\tau}\quad & b^{i}\cos{\tau}\quad & 0\quad & 0\\
              -b^{i}\sin{\tau}\quad & b^{i}\sin{\tau}\quad & 0\quad & 0
            \end{array}
          \right].$$
By applying the Fourier spectral discretisation on $x,v,\tau$ in \eqref{hoppingvector2}, we can get
\begin{equation}\label{hoppingODE}
\begin{aligned}
&\frac{d}{dt} \tilde{\boldsymbol{g}}=(-i(P_{x}\otimes I_4\otimes {\text {diag}}\{\sum_j v_j\ket{j}\}\otimes I_{M_\tau}+\sum\nolimits_{i} \ket{i}\bra{i}\otimes A(x_i)\otimes P_{v}\otimes I_{M_\tau}\\
&+\sum\nolimits_{i,j} \ket{i}\bra{i}\otimes B(t,x_i,v_j)\otimes I_{M_v}\otimes P_{\tau})+\sum\nolimits_{i,j,k}\ket{i}\bra{i}\otimes\ket{j}\bra{j}\otimes\ket{k}\bra{k}\otimes C(x_i,v_j,\tau_k))\tilde{\boldsymbol{g}},
\end{aligned}
\end{equation}
where $\tilde{\boldsymbol{g}}=\sum_{i,j,k}\boldsymbol{g}(t,x_i,v_j,\tau_k)\ket{i}\ket{j}\ket{k}$.
One can further get
\begin{equation*}
H_1=\sum\nolimits_{i,j,k}\ket{i}\bra{i}\otimes\ket{j}\bra{j}\otimes\ket{k}\bra{k}\otimes \frac{C(x_i,v_j,\tau_k)+C^{T}(x_i,v_j,\tau_k)}{2}
\end{equation*}
and
\begin{align*}
H_2&=-(P_{x}\otimes I_4\otimes {\text {diag}}\{\sum\nolimits_j v_j\ket{j}\}\otimes I_{M_\tau}+\sum\nolimits_{i} \ket{i}\bra{i}\otimes A(x_i)\otimes P_{\tau}\otimes I_{M_\tau}\\
&+\sum\nolimits_{i} \ket{i}\bra{i}\otimes B(x_i)\otimes I_{M_v}\otimes P_{\tau})\\
&-i\sum\nolimits_{i,j,k}\ket{i}\bra{i}\otimes\ket{j}\bra{j}\otimes\ket{k}\bra{k}\otimes \frac{C(x_i,v_j,\tau_k)-C^{T}(x_i,v_j,\tau_k)}{2}.
\end{align*}
And one can further get $\lambda_n(H_1)=\frac{\Vert b^{i}\Vert_{\infty}}{\sqrt{2}}$.

\subsection{Numerical results}

This section is devoted to the numerical experiments of the Schr{\"o}dingerisation approach for the surface hopping model.
We consider the following initial conditions with $x,v\in[-2\pi,2\pi]$
\begin{align*}
&f^{+}(0,x,v)=f^{-}(0,x,v)=\left(1+\frac{1}{2}\cos(x)\right)\frac{e^{-v^2/2}}{\sqrt{2\pi}},\\
&f^{i}(0,x,v)=f^{-}(0,x,v)=\left(\left(1+\frac{1}{2}\sin(x)\right)+i\left(1+\frac{1}{2}\cos(x)\right)\right)\frac{e^{-v^2/2}}{\sqrt{2\pi}},
\end{align*}
and functions
$$U(x)=0,\ E(x)=1-\cos(x/2)+\varepsilon,\ b^{i}(x,v)=-\frac{1}{2}\sin(v+1),\ b^{\pm}=0.$$
For all the following tests, the reference solution is computed by the time splitting method.
Moreover, we use notations $M_{\tau}=2^k,M_v=2^l,M_x=2^m,M_p=2^n$ for the cell number of the Schr{\"o}dingerisation method in this section.
Since we don't perform the experiments on a quantum computer now, the huge cost for solving \eqref{hoppingODE} directly by Schr{\"o}dingerisation in classic computer is unacceptable.
Thus, in the practical simulation we consider a further splitting for \eqref{hoppingODE}.
Furthermore, we choose $[-5,5]$ for the computational region of $p$.

In Figure~\ref{imgeg41}, we plot the solutions $f^{\pm,i}(T,x,v)$ and densities $\rho^{\pm,i}(T,x,v)=\int_{-2\pi}^{2\pi}f^{\pm,i}(T,x,v)dp$ for two methods.
The reference solution is computed by using $\Delta t=0.005,M_v=2^4,M_x=2^6$.
For $\varepsilon=1$, the densities evolve without oscillations.
And it can be observed that our method matches well, point-wise, the solutions and densities.

\begin{figure}[!ht]
\centering
\subfigure[]{
\includegraphics[scale=.45]{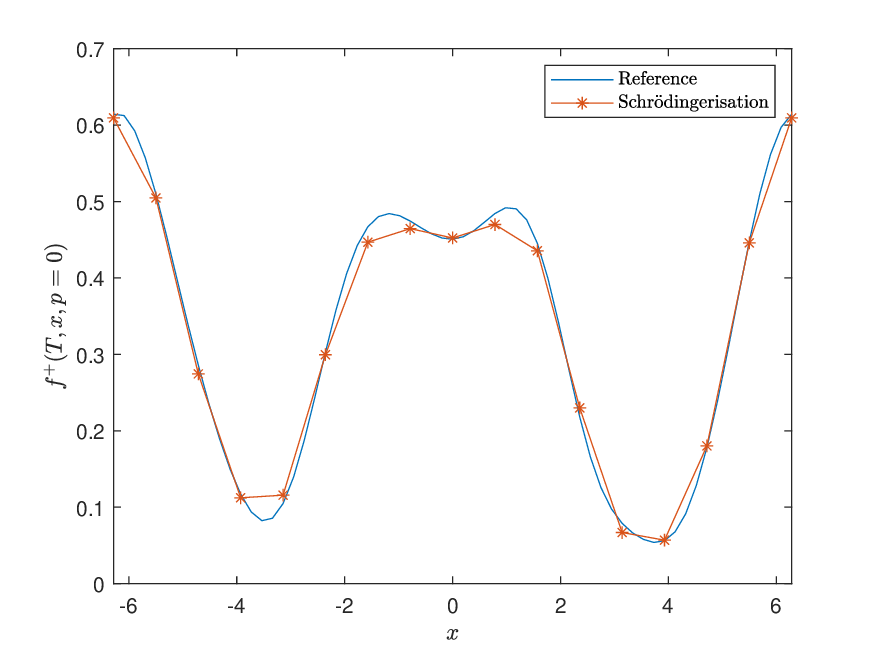}
}
\subfigure[]{
\includegraphics[scale=.45]{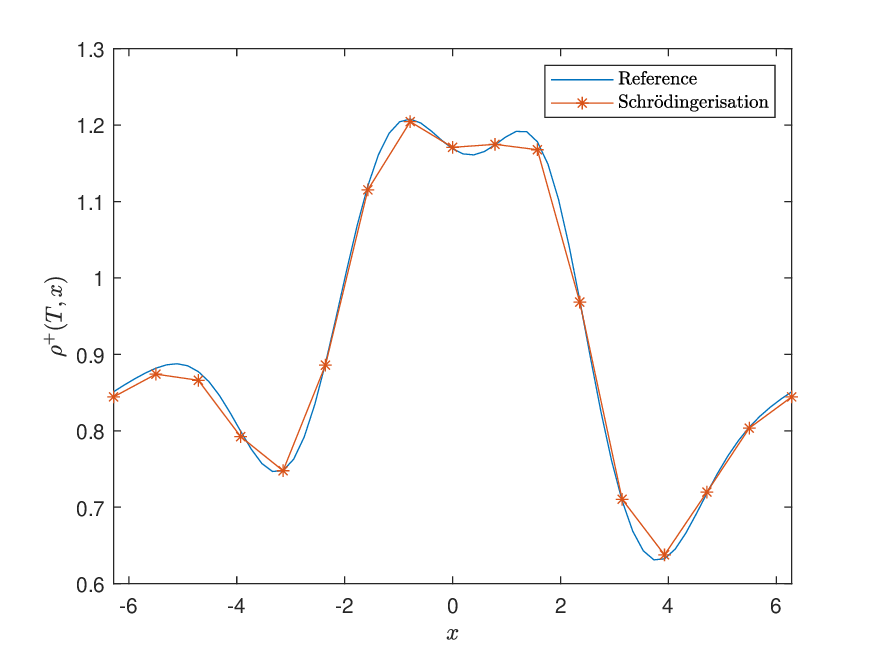}
}
\subfigure[]{
\includegraphics[scale=.45]{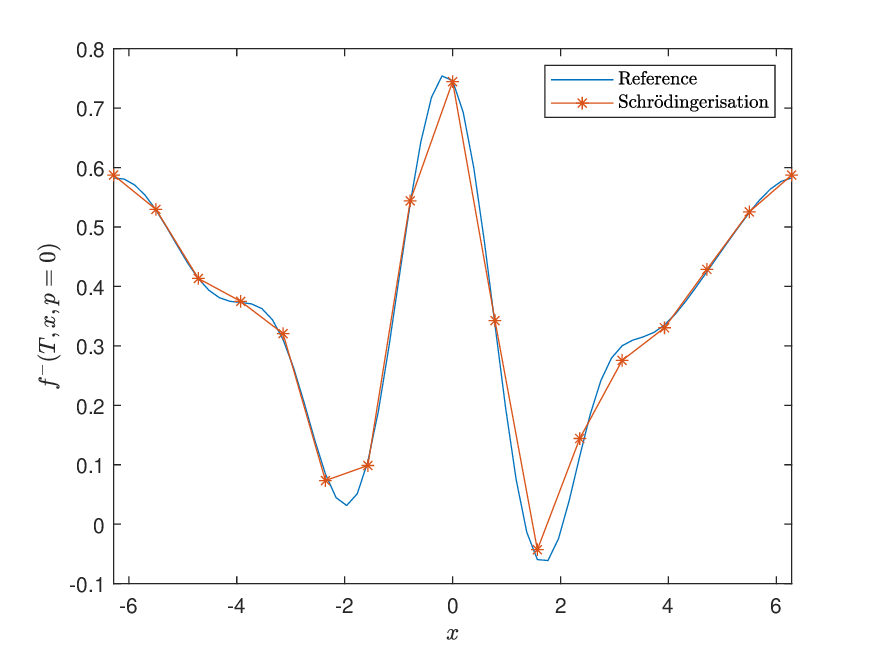}
}
\subfigure[]{
\includegraphics[scale=.45]{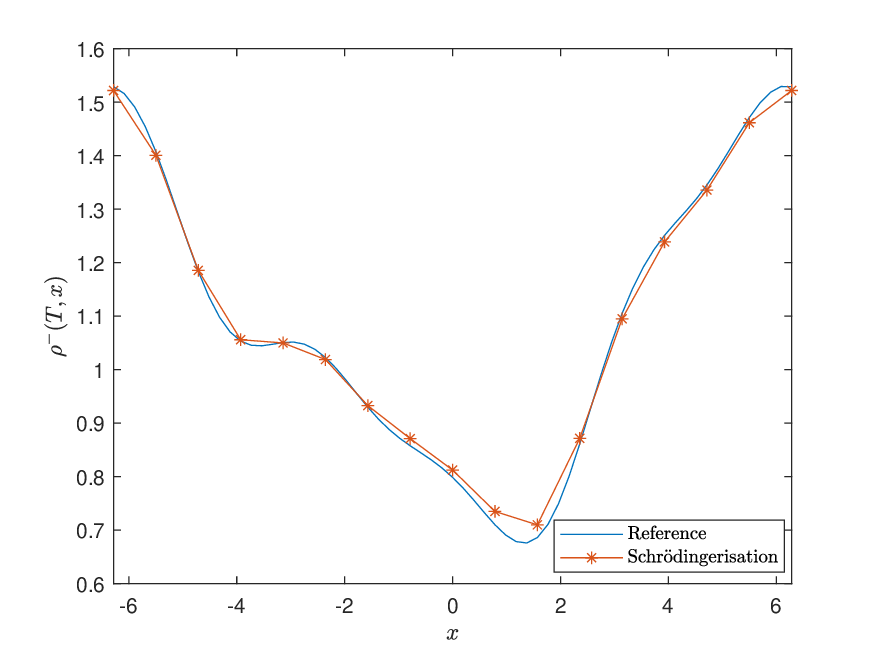}
}
\subfigure[]{
\includegraphics[scale=.45]{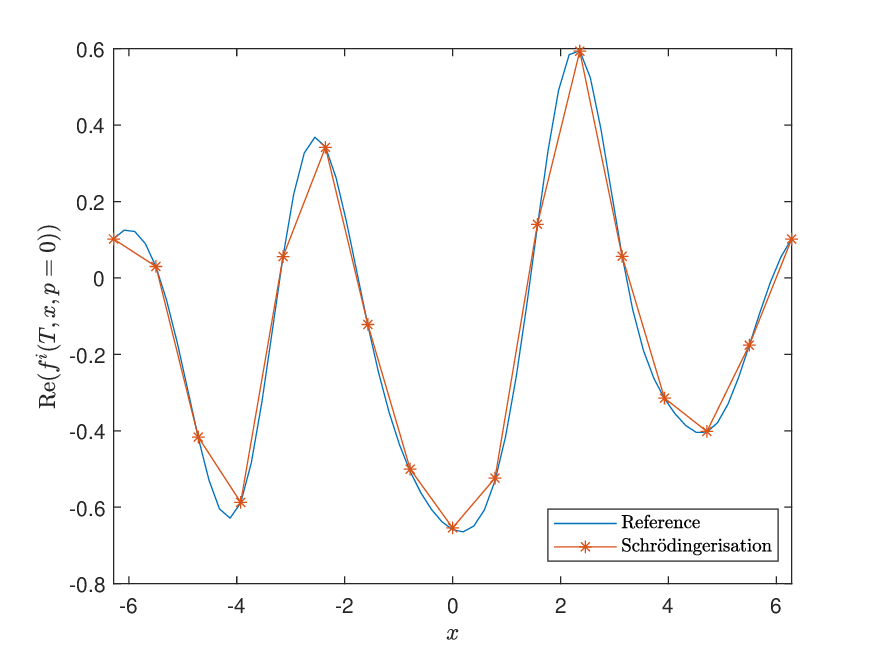}
}
\subfigure[]{
\includegraphics[scale=.45]{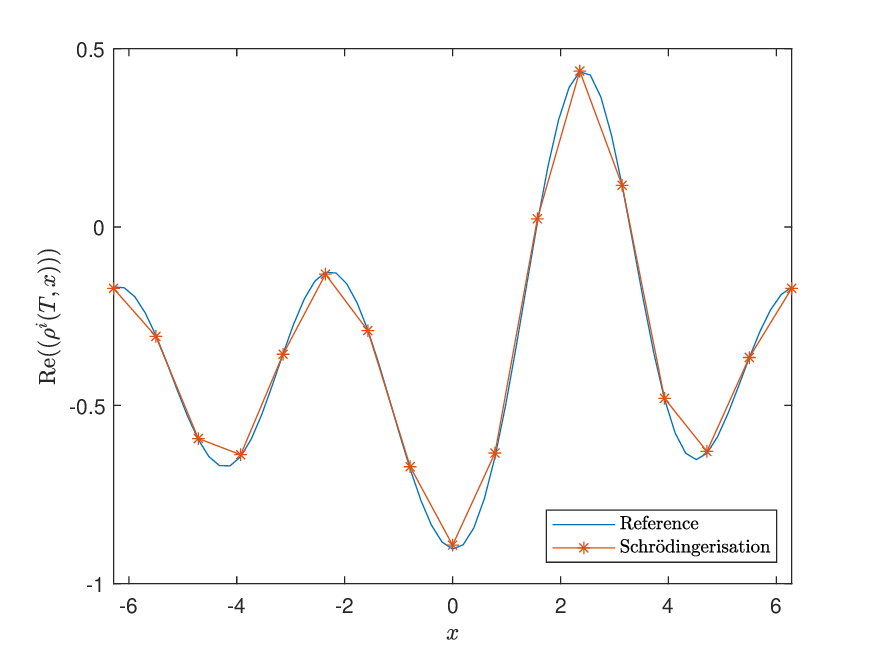}
}
\subfigure[]{
\includegraphics[scale=.45]{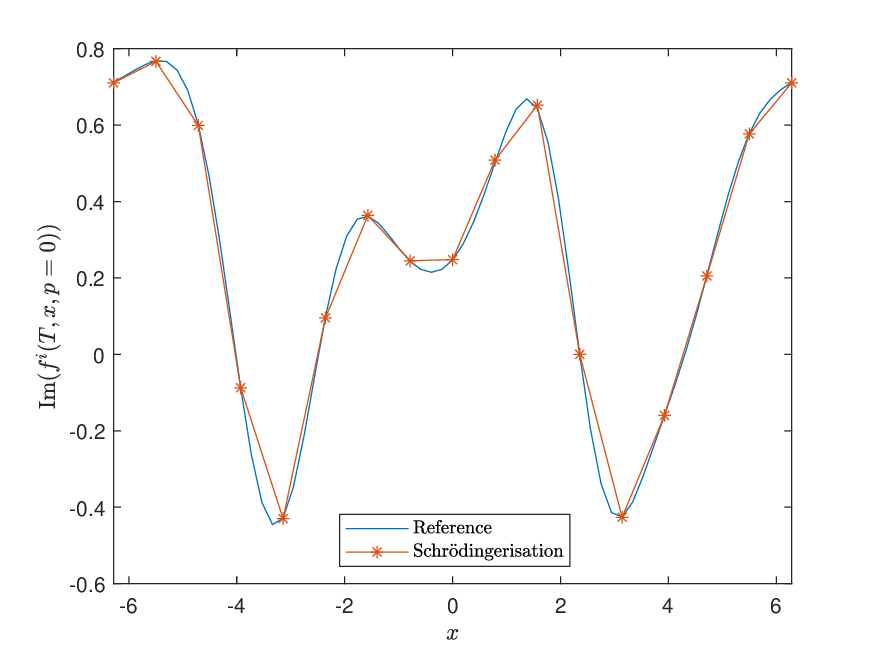}
}
\subfigure[]{
\includegraphics[scale=.45]{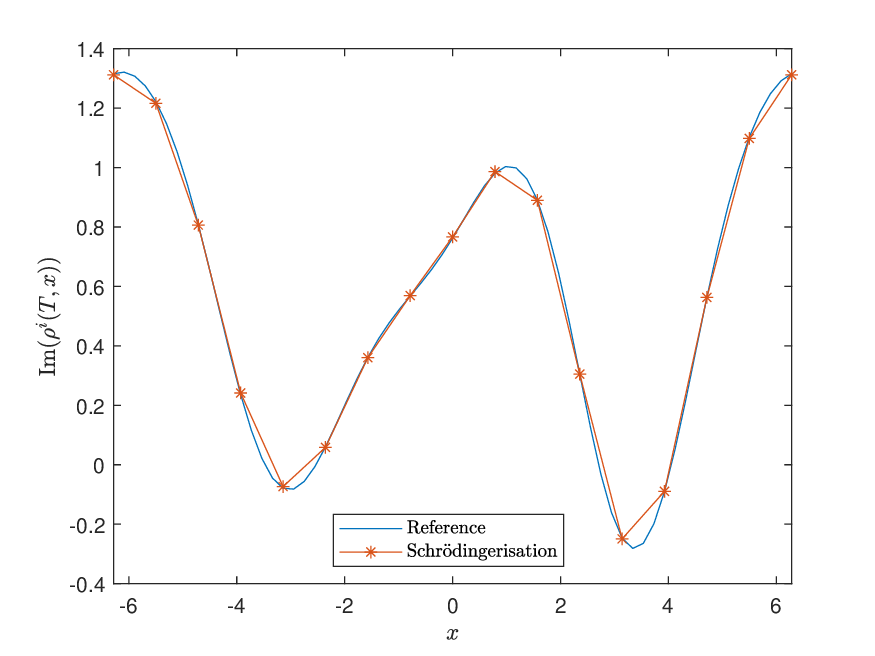}
}
\caption{Comparison between a reference solution and the numerical solution of Schr{\"o}dingerisation (with exact computation for $S$) for $\varepsilon=1,T=2,k=3,l=4,m=4,n=7$.}
\label{imgeg41}
\end{figure}

In Figure~\ref{imgeg42}, we show the densities $\rho^{\pm,i}(T,x,v)$ for two methods.
The reference solution is calculated by using $\Delta{t}=10^{-5},M_v=2^6,M_x=2^8$.
Numerical solution is computed by Schr{\"o}dingerisation with the corrected initial data \eqref{hoppinginitial} and recovered by using $\lambda_0=\frac{\sqrt{2}}{4}$ and non-negative $p_k$ according to Corollary~\ref{lambdachoice}.
In this instance, the densities are highly oscillatory due to $\varepsilon\ll 1$.
However, our method captures very well, point-wise, all densities, even though the mesh is much coarser than the spatial oscillations.

\begin{figure}[!ht]
\centering
\subfigure[]{
\includegraphics[scale=.5]{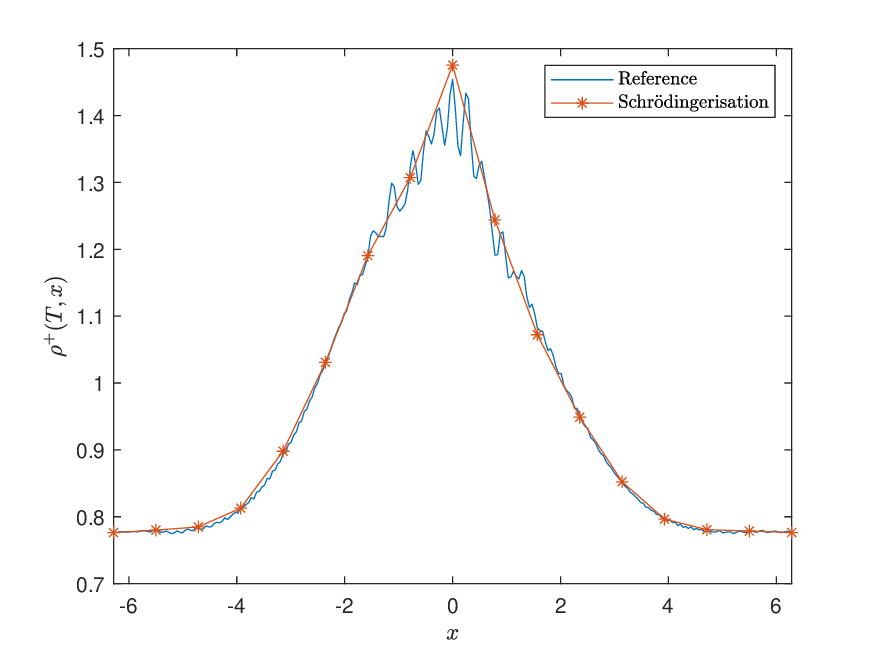}
}
\subfigure[]{
\includegraphics[scale=.5]{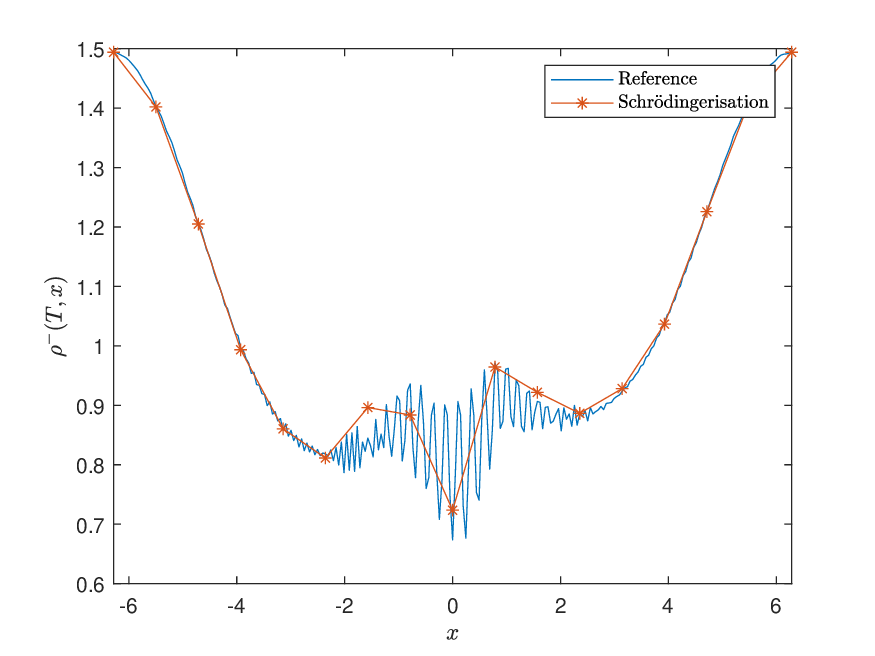}
}
\subfigure[]{
\includegraphics[scale=.5]{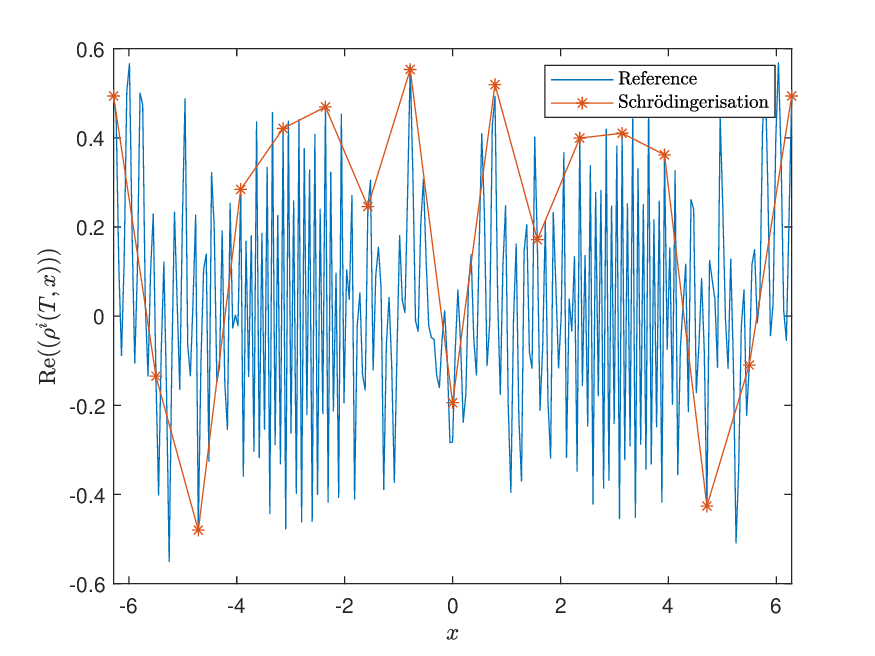}
}
\subfigure[]{
\includegraphics[scale=.5]{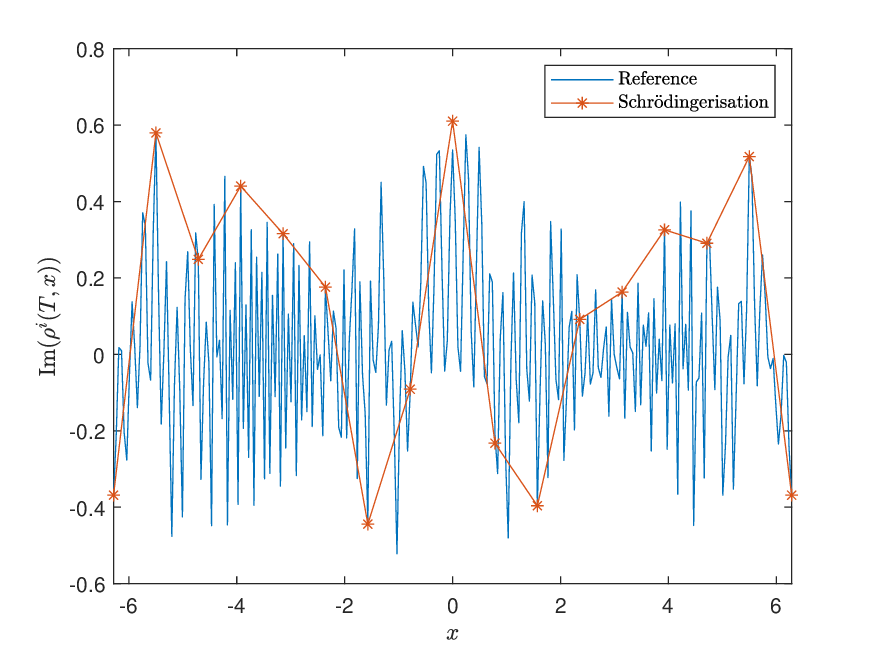}
}
\caption{Comparison between a reference solution and the solution of Schr{\"o}dingerisation (with exact computation for $S$) for $\varepsilon=1/32,T=2,k=3,l=6,m=4,n=8$.}
\label{imgeg42}
\end{figure}

\section{Conclusion}

In this paper, we proposed quantum algorithms for the following highly-oscillatory transport equations 
\begin{equation*}
\partial_t u+\sum_{k=1}^d A_k(x)\partial_{x_k} u=\frac{iE(t,x)}{\varepsilon}Du+Cu
\end{equation*}
by using the Schr{\"o}dingerisation approach introduced in~\cite{Jin2022quantum,Jin2023Quantum} combined with the nonlinear geometric optics (NGO) method~\cite{Nicolas2017Nonlinear}. 
About the Schr{\"o}dingerisation method, we also illustrate the influence of shifting the eigenvalues of linear systems' Hermite parts depending on whether they contain unstable modes or not.
This technique can help the recoveries of the original problem, and will be further studied in our following work. 

Such numerical methods allow us to obtain accurate numerical solutions, in maximum norm,  with mesh size {\it independent} of the wave length.
Our approach can be further extended to higher dimensional problems which is no longer suitable for classical computing.
This is also why we need to design quantum algorithms for these equations.
Numerical examples demonstrate that the proposed method has the desired property of capturing the pointwise solutions of highly oscillatory waves with mesh sizes independent of the wave length.

In our future work, we will design detailed quantum circuits of this method, which is important for future qubit-based general purpose quantum computers.

\section*{Acknowledgement}

The research results of this article are sponsored by the Kunshan Municipal Government research funding.



\bibliographystyle{plain}
\bibliography{refs}

\end{document}